\newtheorem{theorem}{Theorem}[section]
\newtheorem{lemma}[theorem]{Lemma}
\newtheorem{assumption}{Assumption}
\newtheorem{definition}[theorem]{Definition}
\newtheorem{remark}{Remark}
\date{\today}
\begin{document}

\title[Adaptive timestepping, stability, positivity]{Adaptive timestepping for pathwise stability and positivity of strongly discretised nonlinear stochastic differential equations}
\author[C.~Kelly]{C\'onall Kelly}
\address{Department of Mathematics, The
University of the West Indies, Kingston, Jamaica. }
\email{conall.kelly@uwimona.edu.jm}
\author[A.~Rodkina]{Alexandra Rodkina}
\address{Department of Mathematics, The
University of the West Indies, Kingston, Jamaica. }
\email{alexandra.rodkina@uwimona.edu.jm}
\author[E. Rapoo]{Eeva Maria Rapoo}\address{Science Campus, Department of Statistics, 
The University of South Africa, Johannesburg, South Africa}
\email{rapooe@unisa.ac.za}

\footnote{The work was initiated when the second author was visiting The University of South Africa, Johannesburg, South Africa.}

\begin{abstract}
We consider the use of adaptive timestepping to allow a strong explicit Euler-Maruyama discretisation to reproduce dynamical properties of a class of nonlinear stochastic differential equations with a unique equilibrium solution and non-negative, non-globally Lipschitz coefficients. Solutions of such equations may display a tendency towards explosive growth, countered by a sufficiently intense and nonlinear diffusion.

We construct an adaptive timestepping strategy which closely reproduces the a.s. asymptotic stability and instability of the equilibrium, and which can ensure the positivity of solutions with arbitrarily high probability. Our analysis adapts the derivation of a discrete form of the It\^o formula from Appleby et al (2009) in order to deal with the lack of independence of the Wiener increments introduced by the adaptivity of the mesh. We also use results on the convergence of certain martingales and semi-martingales which influence the construction of our adaptive timestepping scheme in a way proposed by Liu \& Mao (2017). 

{\bf Keywords:} Adaptive timestepping; Euler-Maruyama method; locally Lipschitz coefficients; a.s. stability and instability; positivity

{\bf AMS subject classification:} 37H10, 39A50, 60H35, 65C30

\end{abstract}

\maketitle

\section{Introduction}
Consider the scalar stochastic differential equation (SDE) of It\^o type
\begin{eqnarray}
\label{eq:maincont1}
dX(t)&=&X(t)f(X(t)) dt+X(t)g(X(t))dW(t),\quad t\geq 0\\
X(t)&=&\varsigma\geq 0,\nonumber
\end{eqnarray}
where $(W(t))_{t\geq 0}$ is a one-dimensional Wiener process; let $(\mathcal{F}_t)_{t\geq 0}$ be the natural filtration of $W$. The drift and diffusion coefficients satisfy:
\begin{assumption}
\label{as:fg}
Let $f, g:\mathbb R\to [0, \infty)$ be non-negative functions such that $g(u)\neq 0$  for $u\neq 0$.
\end{assumption}

In this article, we use an adaptive timestepping strategy to reproduce qualitative properties of solutions of \eqref{eq:maincont1} in an explicit strong Euler-Maruyama discretisation given by
\begin{equation}\label{eq:gendiscr}
X_{n+1}=X_n\left(1+ h_nf(X_n)+\sqrt{h_n}g(X_n)\left[W(t_{n+1}-W_{t_n})\right]\right),\quad n\in\mathbb{N},
\end{equation}
with $X_0=X(0)=\varsigma$.  Each $h_n$ is one of a sequence of random timesteps, generated as a function of $X_n$, and we set $\{t_n:=\sum_{i=1}^{n}h_i\}_{n\in\mathbb{N}}$. 

Our first goal is to design a strategy that allows discretisations of the form \eqref{eq:gendiscr} to closely reproduce of the a.s. stability and instability of the unique equilibrium solution $X(t)\equiv 0$ of \eqref{eq:maincont1}.  The strategy will be required to capture the stabilising effect of the diffusion as it counters the tendency towards explosive growth due to the positive drift. Since we will use martingale and semimartingale convergence results in our analysis, we must adopt elements of the approach developed by Liu \& Mao~\cite{LM} in order to ensure that those results are applicable.  

Our second goal is to investigate the effect of our adaptive timestepping strategy on the probability of positivity of solutions of \eqref{eq:gendiscr}. Unique solutions of \eqref{eq:maincont1} when $\varsigma>0$ are necessarily positive, though a highly nonlinear diffusion coefficient makes it likely that trajectories of a fixed-step discretisation will overshoot the equilibrium and become negative.  Adaptive timestepping was succesfully used in \cite{AKR2010} to preserve positivity with high probability in equations with either a dominant nonlinear and strongly zero-reverting drift coefficient, or a dominant and highly variable nonlinear diffusion coefficient. That article was a follow up to \cite{AGKR}, and our analytic technique is adapted from both.

An analysis of the ability of explicit numerical methods with adaptive timesteps to reproduce the dynamics of solutions of \eqref{eq:maincont1} is important because explicit Euler methods of the form \eqref{eq:gendiscr} with constant stepsize $h_n\equiv h$ are known (see \cite{HJK2011}) to fail to converge strongly to solutions of \eqref{eq:maincont1} if either $f$ or $g$ grows superlinearly, as is the case for \eqref{eq:maincont}. Fixed-step taming methods were introduced first in \cite{HJK2012} to provide an alternative class of strongly convergent explicit methods for such equations, but may not provide an optimal reproduction of qualitative behaviour: see \cite{TZ2013,KL2017}. It was recently shown (see \cite{FG2016,KL2017}) that, for equations with one-sided Lipschitz drift and globally Lipschitz diffusion coefficients, adaptive timestepping strategies can be used to ensure strong convergence of solutions of the explicit Euler method with variable stepsizes, and therefore their effect on the dynamics of solutions is of interest: see \cite{FG2017}.  

Let us now consider a minimal set of additional constraints to place upon $f$ and $g$. Suppose first that $f$ and $g$ are locally Lipschitz continuous and that Assumption \ref{as:fg} holds. Then there exists a unique, continuous $\mathcal F_t$-measurable process $X$ (see \cite{M}, \cite{O}) satisfying  \eqref{eq:maincont1} on the interval $[0, \tau_e^\varsigma)$, where $\tau_e^\varsigma=\inf\{t>0: |X(t, \varsigma)|\notin [0, \infty)\}$.
Define the first hitting time of zero to be $\upsilon_e^\varsigma=\inf\{t>0: |X(t, \varsigma)|=0.\}$.
It was proved in \cite{AMR1} that $\upsilon_e^\varsigma=\tau_e^\varsigma=\infty$, and therefore unique positive solutions exist on all of $\mathbb{R}^+$, if 
\begin{equation}
\label{cond:stab}
\sup_{u\neq 0}\frac{2f(u)}{g^2(u)}=\beta<1.
\end{equation} 

Condition \eqref{cond:stab} is close to being sharp. \eqref{eq:maincont1} has an equilibrium solution $X(t)\equiv 0$, and (see \cite{AMR1}) if 
\begin{equation}
\label{cond:instab1}
\lim_{u\to 0}\frac{2f(u)}{g^2(u)}>1,
\end{equation}  
then this equilibrium is a.s. unstable: for all $\varsigma>0$,
\[
\mathbb{P}\left[\lim_{t\to\infty} X(t)=0\right]=0.
\]
Alternatively, if 
\begin{equation}
\label{cond:stab1}
\lim_{u\to 0}\frac{2f(u)}{g^2(u)}<1,
\end{equation}  
then for all $\varsigma>0$
\[
\mathbb{P}\left[\lim_{t\to\infty} X(t)=0\right]>0.
\]

Conditions \eqref{cond:stab} and \eqref{cond:stab1} require the diffusion coefficient $g$ to have a stabilising effect. For example, consider the scalar stochastic differential equation with positive polynomial coefficients
\begin{equation}
\label{eq:maincont}
dX(t)=X(t)\left(X^\nu(t) dt+\sigma X^{\nu/2}(t)dW_t\right),\quad t\geq 0, \quad X(0)=\varsigma\geq 0,
\end{equation}
where $\nu\in(0,\infty)$. In this case $f(u)=u^\nu$ and $g(u)=\sigma u^{\nu/2}$, and therefore \eqref{cond:stab} is satisfied with $\lim_{u\to 0}\frac{2f(u)}{g^2(u)}=2/\sigma^2<1$ when $\sigma^2>2$. So if the intensity of the stochastic perturbation is sufficiently large, unique positive solutions exist on $[0,\infty)$ and  converge to zero with positive probability. If $\sigma=0$, then \eqref{eq:maincont} becomes the ordinary differential equation
\[
x'(t)=[x(t)]^{1+\nu},\quad t\geq 0,\quad x(0)=\varsigma> 0,
\]
solutions of which exist only on the interval $[0,\tau_e^\varsigma)$ if
\begin{equation}\label{eq:tau}
\tau_e^\varsigma:=\int_{\varsigma}^{\infty}\frac{1}{u^{1+\nu}}du<\infty.
\end{equation}
In this case the existence of a unique global solution (which remains positive a.s.) and stability of the zero equilibrium with nonzero probability both require the presence of a sufficiently intense stochastic perturbation, due to the action of the positive drift. Note finally that the qualitative analysis of solutions of discrete-time equations in this article does not require $f$ and $g$ to be locally Lipschitz continuous. 

The layout of the article is as follows. In Section \ref{sec:mart}, we review basic ideas from the theory of martingales and present two useful convergence theorems. In Section \ref{sec:mot} we consider a discrete model of \eqref{eq:maincont1} with an i.i.d. innovation process, and show how an adaptive timestepping strategy that enforces a constant bound on the drift and diffusion responses allows us to invoke a.s. stability and instability results from \cite{ABR1}. However, this approach, which requires the use of a discrete form of the It\^o formula, fails when terms of the innovation process are not mutually independent. This is the case for the strong approximation given by \eqref{eq:gendiscr}, since the increments of $W$ are now taken over a random interval which depends on past values of $W$. Therefore, in Section \ref{sec:strong} we adjust our adaptive timestepping strategy to ensure that it generates a sequence of stopping times with respect to the natural filtration of $W$. This will allow us to apply martingale and semi-martingale convergence results from Section \ref{sec:mart} in the proof of our main results, which are presented in Section \ref{sec:main}. Numerical illustrations are given in Section \ref{sec:num}, and proofs of our main results are given in Section \ref{sec:proofs}.

\section{Martingales and convergence theory}\label{sec:mart}
In what follows, suppose that $(\Omega, {\mathcal{F}}, \{{\mathcal{G}}_n\}_{n \in \mathbf N},
{\mathbb P})$ is a complete filtered probability space. We recall the following definitions
\begin{definition}
\begin{enumerate}
\item A stochastic sequence $\{M_n\}_{n \in \mathbb N}$ is said to be a {\it
  $\mathcal{G}_n$-martingale}, if ${\mathbb E}|M_n|<\infty$ and $\mathbb E\left[M_n |
\mathcal{G}_{n-1}\right]=M_{n-1}$ for all $n\in\mathbb N$ {\it a.s.}
\item A stochastic sequence $\{\mu_n\}_{n \in \mathbb N}$ is said to be an {\it
$\mathcal{G}_n$-martingale-difference}, if $\mathbb E |\mu_n|<\infty$ and
$\mathbb E\left[\mu_n | \mathcal{G}_{n-1}\right]=0$ {\it a.s.} for all
$n\in\mathbb N$.
\end{enumerate}
\end{definition}
The following construction may be found in \cite{ABR1,KPR} and will be key to the proof of one of our main results:
\begin{lemma}\label{lem:prodMart}
Let $\{Y_i\}_{i\in\mathbb{N}}$ be a sequence of non-negative random variables defined on $(\Omega,\mathcal{F},\mathbb{P})$ and adapted to the filtration $\{\mathcal{G}_n\}_{n\in\mathbb{N}}$, where each $Y_i$ satisfies the following
\begin{enumerate}
\item[i)] $\mathbb{E}[Y_i]<\infty$;
\item[ii)] $\mathbb{E}[Y_i|\mathcal{G}_{i-1}]=1$. 
\end{enumerate}
Then the sequence $\{M_n\}_{n\in\mathbb{N}}$ given by
\[
M_n=\prod_{i=1}^{n}Y_i,\quad n\in\mathbb{N},
\]
is a $\mathcal{G}_n$-martingale.
\end{lemma}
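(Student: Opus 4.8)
The plan is to verify directly the two defining properties of a $\mathcal{G}_n$-martingale for the partial-product sequence $\{M_n\}$, namely integrability and the one-step conditional expectation identity. First I would observe that $M_n = Y_1 \cdots Y_n$ is $\mathcal{G}_n$-measurable, since each $Y_i$ is $\mathcal{G}_i$-measurable and hence $\mathcal{G}_n$-measurable for $i \le n$; this is immediate from the adaptedness hypothesis.

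Next I would establish integrability, $\mathbb{E}|M_n| = \mathbb{E}[M_n] < \infty$ (the absolute values being redundant by non-negativity of the $Y_i$). The natural route is induction on $n$: the base case $n=1$ is hypothesis (i). For the inductive step, write $M_n = M_{n-1} Y_n$ and condition on $\mathcal{G}_{n-1}$; since $M_{n-1}$ is $\mathcal{G}_{n-1}$-measurable and non-negative, the tower property together with hypothesis (ii) gives
\[
\mathbb{E}[M_n] = \mathbb{E}\big[\mathbb{E}[M_{n-1} Y_n \mid \mathcal{G}_{n-1}]\big] = \mathbb{E}\big[M_{n-1}\,\mathbb{E}[Y_n \mid \mathcal{G}_{n-1}]\big] = \mathbb{E}[M_{n-1}] < \infty,
\]
which incidentally shows $\mathbb{E}[M_n] = 1$ for all $n$. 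One subtlety to note here is that pulling $M_{n-1}$ out of the conditional expectation requires a justification valid for possibly-unbounded non-negative random variables; this is the standard "taking out what is known" property, applicable because $M_{n-1} \ge 0$ (so no integrability obstruction arises in the monotone-approximation argument), and it is exactly this non-negativity hypothesis that makes the step clean.

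Finally, the martingale identity itself: for each $n \in \mathbb{N}$, using $M_n = M_{n-1} Y_n$, the $\mathcal{G}_{n-1}$-measurability of $M_{n-1}$, and hypothesis (ii),
\[
\mathbb{E}[M_n \mid \mathcal{G}_{n-1}] = \mathbb{E}[M_{n-1} Y_n \mid \mathcal{G}_{n-1}] = M_{n-1}\,\mathbb{E}[Y_n \mid \mathcal{G}_{n-1}] = M_{n-1} \quad \text{a.s.}
\]
This completes the verification. I do not anticipate a genuine obstacle: the only point requiring care is the measure-theoretic justification for factoring the known $\mathcal{G}_{n-1}$-measurable term $M_{n-1}$ out of the conditional expectation without an a priori boundedness assumption, which is precisely why the non-negativity of the $Y_i$ (hence of $M_{n-1}$) is included among the hypotheses. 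Everything else is a routine application of the tower property and induction.
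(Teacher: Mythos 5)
Your proof is correct, and it is the standard direct verification (adaptedness, integrability by induction via the tower property and taking the non-negative $\mathcal{G}_{n-1}$-measurable factor $M_{n-1}$ out of the conditional expectation, then the one-step identity $\mathbb{E}[M_n\mid\mathcal{G}_{n-1}]=M_{n-1}$). The paper itself gives no argument here, deferring entirely to Lemma 2.2 of \cite{KPR}, which proceeds by exactly this kind of computation, so your proposal matches the intended proof in substance.
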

\begin{proof}
See proof of Lemma 2.2 in \cite{KPR}.
\end{proof}
We now present two convergence results required for the analysis in this article. The first is a classical result on the convergence of non-negative martingales, which may be found (for example) in Shiryaev~\cite{Shiryaev96}:
\begin{lemma}\label{lem:nnM}
If $\{M_n\}_{n\in\mathbb{N}}$ is a non-negative $\mathcal{G}_{n}$-martingale, then $\lim_{n\to\infty}M_n$ exists with probability one.
\end{lemma}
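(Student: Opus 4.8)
The plan is to reproduce the classical Doob upcrossing argument; this sketch records how it runs in the present notation. The first observation is that a non-negative martingale is automatically bounded in $L^1$: iterating the martingale property yields $\mathbb{E}|M_n| = \mathbb{E}[M_n] = \mathbb{E}[M_{n-1}] = \cdots = \mathbb{E}[M_1]$ for every $n$, so $\sup_{n}\mathbb{E}|M_n| = \mathbb{E}[M_1] < \infty$.

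Next, fix rationals $a,b$ with $0 \le a < b$ and let $U_n[a,b]$ denote the number of upcrossings of $[a,b]$ completed by the finite path $M_1,\dots,M_n$. The key tool is Doob's upcrossing inequality, which I would obtain by the martingale-transform device: define a $\{0,1\}$-valued process $C=(C_k)$ that switches on exactly when the path falls to or below $a$ and switches off exactly when it first rises to or above $b$. This $C$ is predictable with respect to $\{\mathcal{G}_n\}$, so the transform $(C\cdot M)_n := \sum_{k=2}^{n} C_k(M_k - M_{k-1})$ is again a $\mathcal{G}_n$-martingale with $\mathbb{E}[(C\cdot M)_n] = 0$. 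Since each completed upcrossing contributes at least $b-a$ to $(C\cdot M)_n$ while an upcrossing in progress contributes at least $-(M_n-a)^-$, we have the pathwise bound $(C\cdot M)_n \ge (b-a)U_n[a,b] - (M_n-a)^-$. Taking expectations and using $(M_n-a)^- \le a$ --- this is where non-negativity of $M_n$ enters --- gives $(b-a)\mathbb{E}[U_n[a,b]] \le a$, uniformly in $n$.

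Letting $n\to\infty$ and applying monotone convergence yields $\mathbb{E}[U_\infty[a,b]] \le a/(b-a) < \infty$ for $0 < a < b$, so $U_\infty[a,b] < \infty$ almost surely. Intersecting over the countably many such rational pairs produces an event $\Omega_0$ with $\mathbb{P}[\Omega_0]=1$ on which $\{M_n\}$ upcrosses every rational interval only finitely often; this forces $\liminf_{n\to\infty}M_n = \limsup_{n\to\infty}M_n$ on $\Omega_0$, hence $\lim_{n\to\infty}M_n$ exists in $[0,\infty]$ almost surely. A final application of Fatou's lemma, $\mathbb{E}[\lim_n M_n] \le \liminf_n \mathbb{E}[M_n] = \mathbb{E}[M_1] < \infty$, shows the limit is almost surely finite, completing the argument.

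The only genuinely delicate step is the upcrossing inequality itself: one must check carefully that the betting process $C$ is honestly predictable with respect to $\{\mathcal{G}_n\}$ and that its transform against the martingale increments remains a martingale. Everything after that is bookkeeping together with monotone convergence and Fatou. In the paper it is of course simplest just to cite Shiryaev~\cite{Shiryaev96} for this standard fact, which is what the statement does.
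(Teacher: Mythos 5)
Your argument is correct and complete: the reduction to $L^1$-boundedness via constancy of $\mathbb{E}[M_n]$, the upcrossing inequality obtained from the predictable $\{0,1\}$-valued transform with the non-negativity of $M_n$ giving $(M_n-a)^-\le a$, the countable intersection over rational pairs, and the final Fatou step form exactly the classical Doob proof. The paper itself gives no argument here, simply citing Shiryaev~\cite{Shiryaev96}, and your proof is precisely the standard one behind that citation, so there is nothing of substance to contrast.
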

The second result provides for the convergence of random sequences that admit to a particular kind of estimation from above, and was proved in \cite{AMaR}:
\begin{lemma}\label{lem:nonegdif}
  Let $\{Z_n\}_{n\in \mathbf N}$ be a non-negative  $\mathcal{F}_n$-measurable
  process, ${\mathbb E}|Z_n|<\infty$ for all $n\in \mathbb N$, and
  \begin{displaymath}
    Z_{n+1}\le Z_{n}+u_n-v_n+\mu _{n+1}, \quad n = 0, 1, 2, \dots,
  \end{displaymath}
  where $\{\mu_n\}_{n\in \mathbf N}$ is a ${\mathcal{G}}_n$-martingale-difference,
  $\{u_n\}_{n\in \mathbb N}$, $\{v_n\}_{n\in \mathbb N}$ are nonnegative $\mathcal{G}_n$-measurable
  processes and ${\mathbb E}|u_n|, {\mathbb E}|v_n|<\infty$ for all $n\in \mathbb N$.

  Then $$\left\{\omega: \sum_{n=1}^{\infty} u_n<\infty\right\}\subseteq \left\{\omega:
    \sum_{n=1}^{\infty} v_n<\infty\right\}\bigcap\{Z_n\to\}.$$
$\{Z_n\to\}=\{\omega\in\Omega\,:\,Z_n(\omega)\to\}$ denotes the set of all $\omega\in\Omega$ for which $\lim_{n\to+\infty}Z_n(\omega)$ exists and is finite.
\end{lemma}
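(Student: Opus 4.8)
The plan is to run the classical Robbins--Siegmund localisation argument, reducing everything to the convergence theorem for non-negative (super)martingales, Lemma~\ref{lem:nnM}. First I would form the auxiliary process
\[
W_n := Z_n - \sum_{k=0}^{n-1}(u_k - v_k), \qquad n\geq 0,
\]
which is $\mathcal{G}_n$-adapted and integrable (since $\mathbb{E}|Z_n|,\mathbb{E}|u_k|,\mathbb{E}|v_k|<\infty$), and for which the hypothesised inequality together with $\mathbb{E}[\mu_{n+1}\mid\mathcal{G}_n]=0$ yields $W_{n+1}\leq W_n+\mu_{n+1}$, hence $\mathbb{E}[W_{n+1}\mid\mathcal{G}_n]\leq W_n$; thus $\{W_n\}$ is a $\mathcal{G}_n$-supermartingale. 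The obstruction to applying Lemma~\ref{lem:nnM} directly is that $W_n$ need not be non-negative, because of the accumulated $-\sum u_k$, and this is dealt with by localising along the growth of $\sum u_k$.

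Second, for each fixed $N\in\mathbb{N}$ I would introduce the stopping time
\[
\tau_N := \inf\Bigl\{ n\geq 0 : \sum_{k=0}^{n} u_k > N \Bigr\},
\]
which is a $\mathcal{G}_n$-stopping time because $\{\tau_N\leq n\}=\bigl\{\sum_{k=0}^{n}u_k>N\bigr\}\in\mathcal{G}_n$ (here non-negativity of the $u_k$ is used). The stopped process $\{W_{n\wedge\tau_N}\}$ is still a supermartingale, and on $\{n\leq\tau_N\}$ one has $\sum_{k=0}^{(n\wedge\tau_N)-1}u_k\leq N$; combining this with $Z_n\geq 0$ and $v_k\geq 0$ gives $W_{n\wedge\tau_N}\geq -N$. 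Hence $W_{n\wedge\tau_N}+N$ is a non-negative supermartingale, so $\lim_n W_{n\wedge\tau_N}$ exists and is finite a.s.\ --- either by quoting the non-negative supermartingale convergence theorem, or by writing its Doob decomposition $W_{n\wedge\tau_N}+N=M_n-A_n$ with $M_n=W_{n\wedge\tau_N}+N+A_n\geq 0$ a martingale and $A_n\uparrow A_\infty<\infty$ a.s., and applying Lemma~\ref{lem:nnM} to $M_n$. In particular $W_n$ converges a.s.\ on $\{\tau_N=\infty\}$.

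Third, I would let $N\to\infty$. Since $u_k\geq 0$, the event $\{\sum_{n=1}^{\infty}u_n<\infty\}$ equals $\bigcup_{N\in\mathbb{N}}\{\tau_N=\infty\}$ up to a null set, so on $\{\sum u_n<\infty\}$ the sequence $W_n$ converges a.s.\ to a finite limit; on this event $\sum_{k=0}^{n-1}u_k$ also converges, so
\[
Z_n + \sum_{k=0}^{n-1}v_k \;=\; W_n + \sum_{k=0}^{n-1}u_k
\]
converges a.s.\ to a finite limit. Because $Z_n\geq 0$ and $n\mapsto\sum_{k=0}^{n-1}v_k$ is non-decreasing, convergence of their sum forces $\sum_{n=1}^{\infty}v_n<\infty$ a.s.\ on $\{\sum u_n<\infty\}$, and then $Z_n$ converges a.s.\ there as the difference of two a.s.\ convergent sequences, giving $\{\sum u_n<\infty\}\subseteq\{\sum v_n<\infty\}\cap\{Z_n\to\}$. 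The main obstacle is exactly this sign problem: $W_n$ is only a supermartingale, not non-negative, so in principle it can drift to $-\infty$ along the $\sum u_k$ direction, and the heart of the argument is the localisation by $\tau_N$ together with the two bookkeeping facts that $W_{n\wedge\tau_N}\geq -N$ and that the events $\{\tau_N=\infty\}$ exhaust $\{\sum u_n<\infty\}$; everything else --- integrability, the supermartingale property, and the final extraction of $\sum v_n<\infty$ and of the limit of $Z_n$ --- is routine.
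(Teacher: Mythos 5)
Your proof is correct. Note that the paper does not prove this lemma at all --- it is quoted from \cite{AMaR} --- and your argument is essentially the standard Robbins--Siegmund proof used there: pass to $W_n=Z_n-\sum_{k<n}(u_k-v_k)$, which is a supermartingale, localise with $\tau_N=\inf\{n:\sum_{k\le n}u_k>N\}$ so that $W_{n\wedge\tau_N}+N\ge 0$, conclude convergence on $\{\tau_N=\infty\}$, and exhaust $\{\sum u_n<\infty\}=\bigcup_N\{\tau_N=\infty\}$, after which monotonicity of $\sum_{k<n}v_k$ and $Z_n\ge 0$ give both conclusions. Your explicit bridge from the martingale statement of Lemma~\ref{lem:nnM} to supermartingale convergence via the Doob decomposition (with $A_\infty<\infty$ because $A_n\le M_n$ and $M_n$ converges) is a legitimate way to stay within the tools the paper provides, and the lower bound $W_{n\wedge\tau_N}\ge -N$ is justified for all $n$ since $\sum_{k=0}^{(n\wedge\tau_N)-1}u_k\le N$ by minimality of $\tau_N$; the resulting inclusion holds, as usual, up to a null set.
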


\section{An adaptive timestepping strategy}\label{sec:adapt}

\subsection{Motivation: strategy for a discrete time model}\label{sec:mot}
We will start by investigating the behaviour of the stochastic difference equation
\begin{eqnarray}\label{eq:gendiscr2}
X_{n+1}&=&X_n\left(1+ h_nf(X_n)+\sqrt{h_n}g(X_n)\chi_{n+1}\right),\quad n\in\mathbb{N},\\
X_0&=&\varsigma>0.\nonumber
\end{eqnarray}
Each $\chi_n$ satisfies the following assumption:
\begin{assumption}
  \label{as:noise}
  Suppose that
 \begin{enumerate}
  \item $\chi_n$ are independent $\mathcal{F}_n$-measurable random variables satisfying
    \begin{equation*}
      \mathbb E\chi_n = 0, \qquad \mathbb E\chi_n^2=1, \qquad
      \mathbb E|\chi_n|^3 \mbox{ are uniformly bounded;}
    \end{equation*}
  \item the probability density function $p_n$ of each $\chi_n$ exists and satisfies
    \begin{equation*}
      x^3 p_n(x) \to 0 \quad \mbox{as } |x|\to\infty \quad
      \mbox{uniformly in } n.
    \end{equation*}
  \end{enumerate}
\end{assumption}
 Eq. \eqref{eq:gendiscr2} is not the strong Euler-Maruyama discretisation of \eqref{eq:maincont1}, since the innovation process $\{\chi_n\}_{n\in\mathbb{N}}$ cannot represent a sample of the increments of $W$: see Section \ref{sec:strong} for more detail. 

Consider this discrete form of the It\^o formula, which was presented and proved in \cite{ABR1}: 
\begin{lemma}
\label{thm:Idiscrto}
Consider $\phi:\mathbb R\to \mathbb R$ such that there exists $\delta>0$ and
  $\bar \phi:\mathbb R\to \mathbb R $ satisfying
  \begin{enumerate}
  \item $\bar \phi  \equiv \phi$ on $U_\delta = [1-\delta,1+\delta]$,
  \item $\bar \phi \in C^3(\mathbb R)$ and $\left|\bar \phi'''(x)\right|\leq M$ for some $M$
    and all $x\in\mathbb R$,
  \item $\int_{\mathbb R} \left|\phi - \bar \phi\right| dx <
    \infty$.
  \end{enumerate}
Let $\chi$ be measurable with respect to the filtration $\{\mathcal{G}_n\}_{n\in\mathbb{N}}$ and satisfy Assumption~\ref{as:noise}. Let $f_n$ and $g_n$ be $\mathcal{G}_n$-measurable uniformly bounded random variables.
  Then there exists $h_0>0$ such that for all $h\le h_0$  and $n\in \mathbb N$, 
  \begin{multline*}
    \label{eq:Itoformula}
\mathbb E \left[\left. \phi\left(1+f_nh + g_n\sqrt{h}\chi_{n+1}\right) \right| \mathcal{G}_{n+1} \right ]
    \\= \phi(1) + \phi'(1) f_n h + \frac{\phi''(1)}{2} g_n^2 h
    + O(h^{3/2})[f_n  + g_n^2],
  \end{multline*}
where $|O(u)|\le K|u|$  for some $K>0$ and all $u\in \mathbb R$.
\end{lemma}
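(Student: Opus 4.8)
The plan is to prove the discrete It\^o formula by Taylor-expanding $\bar\phi$ to third order about $1$, taking the conditional expectation, and carefully controlling the remainder using the moment and density hypotheses in Assumption~\ref{as:noise}. First I would set $\xi := f_n h + g_n\sqrt h\,\chi_{n+1}$ and write, using the $C^3$ extension $\bar\phi$ and Taylor's theorem with integral remainder,
\[
\bar\phi(1+\xi)=\bar\phi(1)+\bar\phi'(1)\xi+\tfrac12\bar\phi''(1)\xi^2+R(\xi),\qquad |R(\xi)|\le \tfrac{M}{6}|\xi|^3.
\]
Taking $\mathbb E[\,\cdot\mid\mathcal G_{n+1}]$ (here $\mathcal G_{n+1}$ should be read as the $\sigma$-algebra under which $f_n,g_n$ are measurable but $\chi_{n+1}$ is independent and centred; conditioning on $\mathcal G_n$ in the notation of the paper), and using $\mathbb E\chi_{n+1}=0$, $\mathbb E\chi_{n+1}^2=1$, the linear and quadratic terms collapse to
\[
\bar\phi(1)+\bar\phi'(1)f_n h+\tfrac12\bar\phi''(1)\big(f_n^2h^2+g_n^2h\big),
\]
and since $f_n$ is uniformly bounded the stray $f_n^2h^2$ term is $O(h^{3/2})f_n$ in the required sense. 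Because $\bar\phi\equiv\phi$ on $U_\delta$, the values $\bar\phi(1),\bar\phi'(1),\bar\phi''(1)$ coincide with $\phi(1),\phi'(1),\phi''(1)$, so the main terms are exactly those claimed.

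The remaining work is to show $\mathbb E[R(\xi)\mid\mathcal G_n]$ and the discrepancy $\mathbb E[(\phi-\bar\phi)(1+\xi)\mid\mathcal G_n]$ are both $O(h^{3/2})[f_n+g_n^2]$. For the first, I expand $|\xi|^3\le C(|f_n|^3h^3+|g_n|^3h^{3/2}|\chi_{n+1}|^3)$ and take expectations, using the uniform bound on $\mathbb E|\chi_{n+1}|^3$ and the uniform bounds on $f_n,g_n$; then $\mathbb E|R(\xi)\mid\mathcal G_n|\le C'h^{3/2}(|f_n|^3+|g_n|^3)$, which is dominated by $h^{3/2}[f_n+g_n^2]$ on the bounded range of $f_n,g_n$. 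The second piece is the genuinely delicate one: on the event that $1+\xi$ leaves $U_\delta$, $\phi$ and $\bar\phi$ may differ, and this event requires $|\chi_{n+1}|$ to be of order $1/\sqrt h$ (once $h$ is small relative to $f_n$), so one writes
\[
\big|\mathbb E[(\phi-\bar\phi)(1+\xi)\mid\mathcal G_n]\big|\le \int_{|g_n\sqrt h x+f_n h|\ge\delta}\big|(\phi-\bar\phi)(1+f_nh+g_n\sqrt h x)\big|\,p_{n+1}(x)\,dx,
\]
and changes variables $y=1+f_nh+g_n\sqrt h x$ to convert this into $\frac{1}{g_n\sqrt h}\int_{|y-1|\ge\delta}|(\phi-\bar\phi)(y)|\,p_{n+1}\big(\tfrac{y-1-f_nh}{g_n\sqrt h}\big)\,dy$. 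The hypothesis $x^3p_n(x)\to0$ uniformly, together with $\int|\phi-\bar\phi|\,dx<\infty$, lets one bound $p_{n+1}$ of the large argument by $C\big(\tfrac{g_n\sqrt h}{y-1-f_nh}\big)^3$ uniformly in $n$, yielding a bound of order $g_n^2 h \cdot\int_{|y-1|\ge\delta}\tfrac{|(\phi-\bar\phi)(y)|}{|y-1-f_nh|^3}\,dy = O(h)\,g_n^2$ — in fact this gives $O(h)[g_n^2]$, comfortably inside the stated $O(h^{3/2})$-type error once one is slightly more careful (or one simply absorbs it, noting $O(h)\le O(h^{3/2})\cdot h^{-1/2}$ is not quite enough, so the argument must extract the extra $\sqrt h$ from the tail decay, exploiting that $|\chi_{n+1}|\ge \delta/(2g_n\sqrt h)$ forces an extra factor).

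I expect the main obstacle to be precisely this last tail estimate: getting the power of $h$ correct in the $(\phi-\bar\phi)$ term and confirming it is $O(h^{3/2})[f_n+g_n^2]$ rather than merely $O(h)$. The key is that the event $\{1+\xi\notin U_\delta\}$ not only has small probability but forces $|\chi_{n+1}|$ to be large, and the cubic decay $x^3p_n(x)\to0$ converts ``$|\chi_{n+1}|$ of size $\gtrsim h^{-1/2}$'' into a density bound of size $\lesssim h^{3/2}$ after accounting for the Jacobian $1/(g_n\sqrt h)$; combined with integrability of $\phi-\bar\phi$ and uniform boundedness of $g_n$ this closes the estimate uniformly in $n$, and one then chooses $h_0$ small enough (depending only on $\delta$, the uniform bounds on $f_n,g_n$, and $M$, $K$) that $1+f_nh<1+\delta/2$ so the change of variables and tail restriction $|x|\ge\delta/(2g_n\sqrt h)$ are valid for all $h\le h_0$. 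Finally I would collect the three contributions — the exact main terms, the Taylor remainder $R$, and the extension discrepancy — to obtain the stated identity with a single constant $K$.
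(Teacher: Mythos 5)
First, note that the paper does not prove this lemma at all: it is quoted verbatim from \cite{ABR1}, so the only in-paper material to compare against is the adapted argument the authors later run for the Gaussian-increment case (Lemmas \ref{lem:firstb} and \ref{lem:Ualbound}). Your overall architecture (global $C^3$ Taylor expansion of $\bar\phi$ about $1$, third-moment control of the remainder, then a separate estimate of the discrepancy $\mathbb E[(\phi-\bar\phi)(1+\xi)\mid\cdot]$ via the tail of the innovation) is the right one, and the main-term and Taylor-remainder computations are fine.

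The genuine gap is exactly at the step you yourself flag as the obstacle, and your proposed fix does not work. After the change of variables, the Jacobian contributes $1/(g_n\sqrt h)$ and the density bound contributes $(g_n\sqrt h)^3|y-1-f_nh|^{-3}$ times a factor which, under the stated hypothesis $x^3p_n(x)\to 0$ uniformly, is only $o(1)$ as $h\to0$ (there is no rate). Integrating against $|\phi-\bar\phi|\in L^1$ therefore yields a bound of order $o(h)\,g_n^2$, and $o(h)$ does \emph{not} imply $O(h^{3/2})$: no ``extra $\sqrt h$'' can be extracted from a purely qualitative decay assumption. To obtain the claimed $h^{3/2}$ rate one needs a quantitative ingredient: either a stronger decay/moment hypothesis on $p_n$ (e.g.\ $\sup_n\sup_x|x|^{7/2}p_n(x)<\infty$, or a fourth-moment-type bound), or one must exploit boundedness (not mere integrability) of $\phi-\bar\phi$ together with a Markov-type tail bound on the distribution function coming from the uniformly bounded third moments. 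The latter is precisely how the present paper proceeds in its own Gaussian analogue: it uses $|\bar\phi(u)-|u|^\alpha|\le H$ from \eqref{prop:Hphi} and the third-moment tail estimate of Lemma \ref{lem:3mom} to bound the probability of the event $\{|1+U_n|\le \tfrac12\}$ by a constant times $h_n^{3/2}g^3(X_n)$, which is where the $h^{3/2}$ genuinely originates. As written, your argument establishes the expansion with an error term $o(h)[f_n+g_n^2]$ (which would in fact suffice for the stability applications), but it does not prove the $O(h^{3/2})[f_n+g_n^2]$ statement of the lemma; the concluding claim that the tail decay ``closes the estimate uniformly in $n$'' is unsubstantiated under the hypotheses given.
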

Lemma \ref{thm:Idiscrto} was used in \cite{ABR1} to provide conditions for a.s. asymptotic stability (using $\phi(u)=u^\alpha$) and instability (using $\phi(u)=u^{-\alpha}$) for the equilibrium solution of \eqref{eq:gendiscr} when $f$ and $g$ are bounded, $\alpha\in (0, 1)$ in both cases. In this article $f$ and $g$ are not bounded. However, we can exploit the analysis in \cite{ABR1} by constructing an adaptive timestepping strategy that ensures boundedness of the drift and diffusion response at each step.

Define, for some $\bar h>0$ and for each $n\in \mathbb N$
\begin{equation}
\label{def:hn0}
h_n:=\frac {\bar h}{1+f(x_n)+g^2(x_n)}.
\end{equation}
Let, also, for each $u\in  \mathbb R$,
\begin{equation}
\label{def:PhiG}
\Phi(u):=\frac {f(u)}{1+f(u)+g^2(u)}, \quad \Gamma(u):=\frac {g(u)}{\sqrt{1+f(u)+g^2(u)}}.
\end{equation}
Using notation \eqref {def:PhiG} we can write equation \eqref {eq:gendiscr} as
\begin{equation}
\label{eq:discrPhG}
X_{n+1}=x_n\biggl(1+ \bar h \Phi(X_n)+\sqrt{\bar h}\Gamma (X_n)\chi_{n+1}\biggr), \quad x_0=\varsigma>0, \,\, n\in\mathbf N.
\end{equation}
We can apply \cite[Theorems 6 \& 8]{ABR1} to \eqref{eq:discrPhG} and arrive at the following result.
\begin{theorem}
\label{thm:stabinstabnottrue}
Let $\{X_n\}_{n\in\mathbb{N}}$ be a solution of \eqref{eq:gendiscr} where each $h_n$ is defined by \eqref{def:hn0}, where $f$ and $g$ satisfy Assumption \ref{as:fg}, and where terms of the sequence $\{\chi_n\}_{n\in\mathbb{N}}$ satisfy Assumption \ref{as:noise}. 
\begin{enumerate}
\item [(i)] If \eqref{cond:stab} holds then there exists $h_0>0$ such that for all $\bar h\le h_0$ the equilibrium solution of \eqref{eq:discrPhG} is globally a.s. asymptotically stable: for each $\varsigma>0$
\[
\lim_{n\to \infty} X_n=0,\quad a.s.
\]
\item  [(ii)]  If 
\begin{equation}
\label{cond:instab}
\liminf_{u\neq 0}\frac{2f(u)}{g^2(u)}>1,
\end{equation}  
then there exists $h_0>0$ such that for all $\bar h\le h_0$ the equilibrium solution of \eqref{eq:discrPhG} is a.s. unstable: for each $\varsigma>0$
\[
\mathbb P\left[\lim_{n\to \infty} X_n=0\right]=0.
\]
\end{enumerate}
\end{theorem}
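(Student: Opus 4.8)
The plan is to reduce the statement to the stability and instability theorems of \cite{ABR1} (Theorems 6 and 8 there), which apply to recursions of the form \eqref{eq:discrPhG} with \emph{uniformly bounded} drift and diffusion responses. First I would substitute the definition \eqref{def:hn0} of $h_n$ into \eqref{eq:gendiscr} and check, by elementary algebra, that
\[
h_n f(X_n) = \frac{\bar h\, f(X_n)}{1+f(X_n)+g^2(X_n)} = \bar h\,\Phi(X_n), \qquad \sqrt{h_n}\, g(X_n) = \frac{\sqrt{\bar h}\, g(X_n)}{\sqrt{1+f(X_n)+g^2(X_n)}} = \sqrt{\bar h}\,\Gamma(X_n),
\]
with $\Phi,\Gamma$ given by \eqref{def:PhiG}; hence \eqref{eq:gendiscr} coincides with \eqref{eq:discrPhG}, and $h_n\in(0,\bar h]$ for every $n$.

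Next I would verify the hypotheses of \cite[Theorems 6 and 8]{ABR1} for \eqref{eq:discrPhG}. The innovation sequence $\{\chi_n\}_{n\in\mathbb{N}}$ satisfies Assumption \ref{as:noise} by hypothesis. The responses are uniformly bounded, since for all $u$
\[
0\le \Phi(u) < 1, \qquad 0\le \Gamma^2(u) < 1, \qquad \Phi(u)+\Gamma^2(u) = \frac{f(u)+g^2(u)}{1+f(u)+g^2(u)} < 1,
\]
and $\Gamma(u)\neq 0$ whenever $u\neq 0$, because $g(u)\neq 0$ for $u\neq 0$ by Assumption \ref{as:fg}. The key observation is that the reparametrisation leaves the stability ratio unchanged: for every $u\neq 0$,
\[
\frac{2\Phi(u)}{\Gamma^2(u)} = \frac{2f(u)}{g^2(u)},
\]
so \eqref{cond:stab} is exactly $\sup_{u\neq 0} 2\Phi(u)/\Gamma^2(u)=\beta<1$, and \eqref{cond:instab} is exactly $\liminf_{u\neq 0} 2\Phi(u)/\Gamma^2(u)>1$.

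Granting these checks, part (i) follows from \cite[Theorem 6]{ABR1}, whose proof uses the Lyapunov-type function $\phi(u)=u^\alpha$ with $\alpha\in(0,1)$ together with the discrete It\^o formula of Lemma \ref{thm:Idiscrto}; the latter is what forces the threshold $\bar h\le h_0$ in the statement. Part (ii) follows from \cite[Theorem 8]{ABR1}, applied with $\phi(u)=|u|^{-\alpha}$, $\alpha\in(0,1)$; here one notes first that, since each $\chi_n$ has a density, the event $\{1+\bar h\Phi(X_n)+\sqrt{\bar h}\,\Gamma(X_n)\chi_{n+1}=0\}$ is null, so $X_n\neq 0$ a.s. for every $n$ and the negative power is well defined. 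The only genuinely delicate step is the bookkeeping: one must confirm that \emph{all} the hypotheses of \cite[Theorems 6 and 8]{ABR1} --- including any non-degeneracy of $\Gamma$ near the equilibrium and any growth restrictions at infinity --- reduce, under Assumptions \ref{as:fg} and \ref{as:noise}, to the boundedness and ratio conditions established above, so that the cited theorems apply to \eqref{eq:discrPhG} without further work.
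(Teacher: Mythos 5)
Your proposal is correct and takes essentially the same route as the paper: the paper's proof likewise rewrites \eqref{eq:gendiscr} with stepsize \eqref{def:hn0} as \eqref{eq:discrPhG}, notes that $0\le\Phi,\Gamma\le 1$ with $\Phi,\Gamma$ satisfying Assumption \ref{as:fg}, observes that \eqref{cond:stab} and \eqref{cond:instab} transfer to the ratio $2\Phi(u)/\Gamma^2(u)$ (which, as you point out, is in fact identically equal to $2f(u)/g^2(u)$), and then cites \cite[Theorems 6 \& 8]{ABR1}. Your extra checks (non-vanishing of $\Gamma$ off zero, a.s.\ non-vanishing of $X_n$ via the density of $\chi_n$) are sensible refinements of the same argument rather than a different approach.
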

\begin{proof}
Solutions of \eqref {eq:discrPhG} are also solutions of \eqref{eq:gendiscr2}, and vice versa. Note that, for each $u\in  \mathbb R$ 
\[
0\le \Phi(u)\le 1, \quad  0\le \Gamma (u)\le 1,
\]
and $\Phi$ and $\Gamma$ satisfy all conditions of Assumption \ref{as:fg}. Note also that Condition \eqref{cond:stab} implies that 
$
\sup_{u\neq 0}\frac{2\Phi (u)}{\Gamma^2(u)}<1,
$
and condition \eqref{cond:instab} implies that 
$
\liminf_{u\neq 0}\frac{2\Phi (u)}{\Gamma^2(u)}>1.
$
\end{proof}
Therefore we see that, for the discrete-time model \eqref{eq:gendiscr2}, an adaptive $h_n$ may be constructed at each step which bounds the response of the discrete-time drift and diffusion coefficients (even if the drift and diffusion coefficients of the corresponding stochastic differential equation are unbounded) and allows us to apply existing results on a.s. stability and instability from \cite{ABR1}.

\subsection{Strategy for the strong Euler-Maruyama discretisation}
\label{sec:strong}
The strong Euler-Maruyama discretisation of \eqref{eq:maincont1} is given by
\begin{equation}\label{eq:Euler}
X_{n+1}=X_n\left(1+h_nf(X_n)+g(X_n)\triangle W_{n+1}\right),\quad n\in\mathbb{N},
\end{equation}
where $\triangle W_{n+1}:=W(t_{n+1})-W(t_n)$. Unfortunately, we cannot now directly apply the discrete It\^o formula given by Lemma \ref{thm:Idiscrto} in Section \ref{sec:mot}, since terms of the sequence $\{\Delta W_{n}\}_{n\in\mathbb{N}}$ are not independent: the length of the increment $h_{n}=t_{n+1}-t_n$ depends on $X_n$, and therefore on the Wiener path up to time $t_n$. Therefore Assumption \ref{as:noise} is not satisfied, and we must modify the approach that leads to Lemma \ref{thm:Idiscrto}. Since the innovation process directly samples trajectories of $W$, it is now necessary to ensure that each $t_n$ is an $\mathcal{F}_t$-stopping time, where $(\mathcal{F})_{t\geq 0}$ is the natural filtration of $W$, in order that the appropriate semi-martingale convergence theory may be applied. The importance of this issue was raised for the first time in the context of Euler-Maruyama methods with random variable stepsizes in Liu \& Mao~\cite{LM}.

To this end, we must modify the sequence defined by \eqref{def:hn0} to
\begin{equation}\label{def:hnTrue}
h_n=\frac{\bar h}{1+\lfloor f(X_n)\rfloor+\lfloor g^2(X_n)\rfloor}
\end{equation}
where $\lfloor a\rfloor$ denotes the integer part of $a\in [0, \infty)$, so that $\lfloor a\rfloor+1>a$ and $\bar h>0$ is a small convergence parameter. 

\begin{definition}
Suppose that each member of the sequence $\{t_n\}_{n\in\mathbb{N}}$ is an $\mathcal{F}_t$-stopping time: i.e. $\{t_n\leq t\}\in\mathcal{F}_t$ for all $t\geq 0$ where $(\mathcal{F}_t)_{t\geq 0}$ is the natural filtration of $W$. We may then define a discrete time filtration $\{\mathcal{F}_{t_n}\}_{n\in\mathbb{N}}$ by 
\[
\mathcal{F}_{t_n}=\left\{A\in\mathcal{F}\,:\,A\cap\{t_n\leq t\}\in\mathcal{F}_t\right\},\quad n\in\mathbb{N}.
\]
\end{definition}

\begin{lemma}
Consider the sequence $\left\{t_n:=\sum_{i=1}^n h_n\right\}$, where each $h_n$ is defined by \eqref{def:hnTrue}. Then each $t_n$ is an $\mathcal{F}_t$-stopping time.
\end{lemma}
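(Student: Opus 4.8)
The plan is to proceed by induction on $n$, proving the reinforced statement that for every $n$ the time $t_n$ is an $\mathcal F_t$-stopping time \emph{and} $X_n$ (hence $h_n$) is $\mathcal F_{t_n}$-measurable. The single feature that makes the argument go through — and the reason for replacing \eqref{def:hn0} by \eqref{def:hnTrue} — is that, since $f,g\ge 0$, the stepsize map
\[
\psi(x):=\frac{\bar h}{1+\lfloor f(x)\rfloor+\lfloor g^2(x)\rfloor}
\]
takes values in the \emph{fixed countable set} $S:=\{\bar h/m:m\in\mathbb N,\ m\ge1\}$, and $h_n=\psi(X_n)$ with $t_{n+1}=t_n+h_n$. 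I would also note at the outset that $0\le t_n\le n\bar h<\infty$ deterministically, since each $h_i\le\bar h$, so no questions of finiteness arise.

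For the base case, $t_0=0$ is trivially an $\mathcal F_t$-stopping time and $X_0=\varsigma$ is deterministic, hence $\mathcal F_0$-measurable. For the inductive step I would assume $t_n$ is a stopping time and $X_n$ is $\mathcal F_{t_n}$-measurable; then $h_n=\psi(X_n)$ is $\mathcal F_{t_n}$-measurable and, for each $m$, the event $\{h_n=\bar h/m\}=\{X_n\in\psi^{-1}(\{\bar h/m\})\}$ lies in $\mathcal F_{t_n}$ because $\psi^{-1}(\{\bar h/m\})$ is Borel. Then for fixed $t\ge0$ I would write the countable decomposition
\[
\{t_{n+1}\le t\}=\{t_n+h_n\le t\}=\bigcup_{m\ge1}\Bigl(\{h_n=\bar h/m\}\cap\{t_n\le t-\bar h/m\}\Bigr).
\]
By the very definition of $\mathcal F_{t_n}$, for any $A\in\mathcal F_{t_n}$ and any $s\ge0$ one has $A\cap\{t_n\le s\}\in\mathcal F_s$; applying this with $A=\{h_n=\bar h/m\}$ and $s=t-\bar h/m$ (the term being empty when $s<0$) shows each set in the union lies in $\mathcal F_{t-\bar h/m}\subseteq\mathcal F_t$, so $\{t_{n+1}\le t\}\in\mathcal F_t$ and $t_{n+1}$ is a stopping time.

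To close the induction I still need $X_{n+1}$ to be $\mathcal F_{t_{n+1}}$-measurable: here I would use that $t_n\le t_{n+1}$ gives $\mathcal F_{t_n}\subseteq\mathcal F_{t_{n+1}}$, so $X_n$ and $h_n$ are $\mathcal F_{t_{n+1}}$-measurable, while continuity and adaptedness of $W$ make $W(\tau)$ be $\mathcal F_\tau$-measurable at any stopping time $\tau$, so $W(t_n)$ and $W(t_{n+1})$ are both $\mathcal F_{t_{n+1}}$-measurable; then $X_{n+1}=X_n(1+h_nf(X_n)+g(X_n)(W(t_{n+1})-W(t_n)))$ is a Borel function of $\mathcal F_{t_{n+1}}$-measurable random variables. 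I expect the only delicate point to be the bookkeeping with the stopped $\sigma$-algebra $\mathcal F_{t_n}$ — specifically the need to decompose along the countably many possible values of $h_n$ so that, on each piece, the next node $t_{n+1}$ is a stopping time plus a constant and the decomposing event is ``observable by time $t_n$'' in the precise sense $\{h_n=\bar h/m\}\cap\{t_n\le s\}\in\mathcal F_s$. This step would fail for the continuum-valued stepsize \eqref{def:hn0}, which is exactly why the integer-part modification \eqref{def:hnTrue} is adopted for the strong discretisation.
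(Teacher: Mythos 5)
Your proof is correct and is essentially the argument the paper invokes: the paper simply cites Step~1 of Theorem~3.1 in Liu \& Mao \cite{LM}, whose procedure is exactly your induction — the floor-function stepsize \eqref{def:hnTrue} forces $h_n$ into the countable set $\{\bar h/m\}$, so $\{t_{n+1}\le t\}$ decomposes into countably many events $\{h_n=\bar h/m\}\cap\{t_n\le t-\bar h/m\}\in\mathcal F_{t-\bar h/m}\subseteq\mathcal F_t$ via the definition of $\mathcal F_{t_n}$. Your closing observation about why the unmodified stepsize \eqref{def:hn0} would not support this decomposition matches the remark the authors make immediately after the lemma about the need for discretely (rationally) valued timesteps.
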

\begin{proof}
The proof follows the procedure described in Step 1 of Theorem 3.1 in  Liu \& Mao~\cite{LM}.
\end{proof}
As in \cite{LM}, we note that the modified form of the sequence $h_n$ in \eqref{def:hnTrue} is motivated by the need for each timestep to be rational, which is automatically the case when the method is implemented on a finite state machine. 

\begin{remark}\label{rem:condMoments}
Since $t_n$ and $t_{n-1}$ are $\mathcal{F}_t$-stopping times and $\mathcal F_{t_{n-1}}$-measurable, $\Delta W_{n+1}$ is $\mathcal F_{t_{n}}$-conditionally normally distributed, with conditional distribution
\begin{equation}
\label{est:DeltaWn}
\Psi_{n+1}(t)=\frac 1{\sqrt{2\pi (t_{n+1}-t_{n})}}\int_{-\infty}^t e^{-\frac {x^2}{2(t_{n+1}-t_{n})}}ds.
\end{equation}
Moreover, there exists $K>0$ such that the first three conditional moments of $\triangle W_{n+1}$ obey
 \begin{eqnarray*}
\mathbb E\left[\Delta W_{n+1}\bigr| \mathcal F_{t_{n}}  \right]&=&0,\quad a.s.;\\
\mathbb E\left[|\Delta W_{n+1}|^2\bigr| \mathcal F_{t_{n}}  \right]&=&t_{n+1}-t_{n},\quad a.s.;\\
\mathbb E\left[|\Delta W_{n+1}|^3\bigr| \mathcal F_{t_{n}}  \right]&\leq& K|t_{n+1}-t_{n}|^{3/2},\quad a.s.\\
\end{eqnarray*}
\end{remark}

\begin{remark}
The notion of an admissible adaptive timestepping scheme was introduced in Definition 2.3 of  \cite{KL2017} as a way of ensuring the strong convergence of an adaptive explicit Euler method for SDEs with one-sided Lipschitz drift and globally Lipschitz diffusion coefficients. Note that the conditions placed upon $f$ and $g$ in the present article are significantly weaker, and in general a one-sided Lipschitz condition does not hold. So we cannot assume or expect strong convergence of the method, rather we are only concerned with the preservation of dynamics in the discretisation. Nor do we impose maximum and minimum stepsizes $h_{\text{max}}$ and $h_{\text{min}}$ for the theoretical analysis here. Such bounds are essential to the convergence analysis in \cite{KL2017}, but are not required for an analysis of discrete dynamics.
\end{remark}

\section{Main results}\label{sec:main}
All proofs are deferred to Section \ref{sec:proofs}.
\subsection{A.s. stability and instability}
In this section we present sufficient conditions on solutions of \eqref{eq:Euler} for solutions to demonstrate a.s. stability and instability. 
\begin{theorem}\label{thm:ASstab}
Let $\{X_n\}_{n\in\mathbb{N}}$ be a solution of \eqref{eq:Euler} with initial value $X_0=\varsigma>0$, and the sequence $\{h_n\}_{n\in\mathbb{N}}$ satisfy \eqref{def:hnTrue}. Suppose that $f$ and $g$ satisfy the conditions of Assumption \ref{as:fg} and there exists $\beta<1$ such that
\begin{equation}
\label{cond:stabbeta}
\sup_{u\neq 0}\frac{2f(u)}{g^2(u)}=\beta.
\end{equation} 
Then there exists $h_0>0$ such that for all $\bar h<h_0$,
\[
\lim_{n\to\infty}X_n=0,\quad a.s.
\]
\end{theorem}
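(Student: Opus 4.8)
The plan is to rewrite \eqref{eq:Euler} in a form to which the argument behind Theorem~\ref{thm:stabinstabnottrue} can be transferred, with conditional expectations given $\mathcal F_{t_n}$ replacing ordinary expectations. Set $\xi_{n+1}:=\triangle W_{n+1}/\sqrt{h_n}$; by Remark~\ref{rem:condMoments} this variable is $\mathcal F_{t_n}$-conditionally $N(0,1)$, and, by the strong Markov property of $W$ together with the $\mathcal F_{t_n}$-measurability of $h_n$, it is in fact independent of $\mathcal F_{t_n}$, so $\{\xi_n\}$ is i.i.d. standard normal. With
\[
\hat\Phi(u):=\frac{f(u)}{1+\lfloor f(u)\rfloor+\lfloor g^2(u)\rfloor},\qquad
\hat\Gamma(u):=\frac{g(u)}{\sqrt{1+\lfloor f(u)\rfloor+\lfloor g^2(u)\rfloor}},
\]
equation \eqref{eq:Euler} with $h_n$ given by \eqref{def:hnTrue} reads $X_{n+1}=X_n\bigl(1+\bar h\hat\Phi(X_n)+\sqrt{\bar h}\,\hat\Gamma(X_n)\xi_{n+1}\bigr)$, exactly as \eqref{eq:discrPhG} but with $\hat\Phi,\hat\Gamma$ in place of $\Phi,\Gamma$. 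Since $\lfloor a\rfloor+1>a$ we have $0\le\hat\Phi<1$ and $0\le\hat\Gamma^2<1$, while $\hat\Phi(u)/\hat\Gamma^2(u)=f(u)/g^2(u)\le\beta/2$ for $u\ne0$ by \eqref{cond:stabbeta}; in particular $\hat\Phi\le(\beta/2)\hat\Gamma^2\le\hat\Gamma^2$, so $\hat\Phi+\hat\Gamma^2\le2\hat\Gamma^2$.

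Next I would establish the conditional analogue of Lemma~\ref{thm:Idiscrto}: for $\phi$ satisfying the hypotheses of that lemma and for $\mathcal F_{t_n}$-measurable $\hat\Phi(X_n),\hat\Gamma(X_n)$,
\[
\mathbb E\!\left[\phi\bigl(1+\bar h\hat\Phi(X_n)+\sqrt{\bar h}\,\hat\Gamma(X_n)\xi_{n+1}\bigr)\,\big|\,\mathcal F_{t_n}\right]=\phi(1)+\phi'(1)\bar h\hat\Phi(X_n)+\tfrac{\phi''(1)}{2}\bar h\hat\Gamma^2(X_n)+O(\bar h^{3/2})\bigl[\hat\Phi(X_n)+\hat\Gamma^2(X_n)\bigr]
\]
for all small $\bar h$. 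The proof copies that of Lemma~\ref{thm:Idiscrto} from \cite{ABR1}, using the conditional density \eqref{est:DeltaWn} (rescaled by $\sqrt{h_n}$) in place of $p_n$: the decay condition $x^3p_n(x)\to0$ holds uniformly because the conditional variances are bounded by $\bar h$, and the moment estimates are those of Remark~\ref{rem:condMoments}. Now fix $\alpha\in(0,1-\beta)$ and apply this with $\phi(u)=|u|^\alpha$ (a suitable $\bar\phi$ exists: it agrees with $u^\alpha$ near $u=1$ and with $|u|^\alpha$ outside a neighbourhood of the origin, being mollified near $0$). Since $\phi(1)=1$, $\phi'(1)=\alpha$, $\phi''(1)=-\alpha(1-\alpha)$ and
\[
\alpha\hat\Phi(X_n)-\tfrac{\alpha(1-\alpha)}{2}\hat\Gamma^2(X_n)=\tfrac{\alpha}{2}\hat\Gamma^2(X_n)\Bigl(\tfrac{2\hat\Phi(X_n)}{\hat\Gamma^2(X_n)}-(1-\alpha)\Bigr)\le-\tfrac{\alpha}{2}(1-\alpha-\beta)\,\hat\Gamma^2(X_n),
\]
absorbing the $O(\bar h^{3/2})$-remainder via $\hat\Phi+\hat\Gamma^2\le2\hat\Gamma^2$ gives constants $c>0$ and $h_0>0$ with
\[
\mathbb E\bigl[\,|X_{n+1}|^\alpha\,\big|\,\mathcal F_{t_n}\bigr]\le|X_n|^\alpha\bigl(1-c\bar h\,\hat\Gamma^2(X_n)\bigr),\qquad n\in\mathbb N,\ \bar h<h_0.
\]

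Write $V_n:=|X_n|^\alpha$. This is exactly the situation of Lemma~\ref{lem:nonegdif}, with $Z_n=V_n$, $u_n\equiv0$, $v_n=c\bar h\,\hat\Gamma^2(X_n)V_n\ge0$ and $\mu_{n+1}=V_{n+1}-\mathbb E[V_{n+1}\mid\mathcal F_{t_n}]$ a martingale difference; equivalently, $\tilde V_n:=V_n\exp\!\bigl(c\bar h\sum_{i<n}\hat\Gamma^2(X_i)\bigr)$ is a nonnegative $\mathcal F_{t_n}$-supermartingale (its underlying martingale being of the product form in Lemma~\ref{lem:prodMart}), convergent by Lemma~\ref{lem:nnM}. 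Either way, almost surely $V_n\to V_\infty\in[0,\infty)$ and $\sum_n\hat\Gamma^2(X_n)V_n<\infty$.

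The remaining, and I expect hardest, step is to show $V_\infty=0$ a.s. On $\{V_\infty>0\}$ we have $\sum_n\hat\Gamma^2(X_n)<\infty$, and since $1+\lfloor f\rfloor+\lfloor g^2\rfloor\le1+\tfrac32 g^2$ this forces $g^2(X_n)\to0$, hence $f(X_n)\to0$ by \eqref{cond:stabbeta}; thus eventually $h_n=\bar h$ and the recursion collapses to $X_{n+1}=X_n\bigl(1+\bar h f(X_n)+\sqrt{\bar h}\,g(X_n)\xi_{n+1}\bigr)$ with coefficients tending to zero while $|X_n|\to\ell>0$. I would rule this out by exploiting the non-degeneracy of $\{\xi_n\}$, combining the semimartingale convergence of Lemma~\ref{lem:nonegdif} with a conditional Borel--Cantelli argument in the manner of \cite{ABR1,AGKR}, to obtain $\mathbb P[\liminf_n|X_n|>0]=0$, so that $V_\infty=0$ and $X_n\to0$ a.s. Carrying out this dichotomy under Assumption~\ref{as:fg} alone, with no continuity of $f$ or $g$, is where the real work lies.
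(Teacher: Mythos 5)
Up to the semimartingale step your route is essentially the paper's. Your ``conditional analogue of Lemma \ref{thm:Idiscrto}'' is exactly what the paper proves as Lemma \ref{lem:firstb}: a Taylor expansion of $\bar\phi$ about $1$ using the conditional moments of Remark \ref{rem:condMoments}, plus an estimate of $\mathbb E\bigl[\,\bigl|\bar\phi(1+U_n)-|1+U_n|^\alpha\bigr|\,\big|\,\mathcal F_{t_{n-1}}\bigr]$ via the conditional Gaussian law \eqref{est:DeltaWn} and Lemma \ref{lem:3mom}; your normalisation $\xi_{n+1}=\triangle W_{n+1}/\sqrt{h_n}$ (conditionally standard normal, as the paper itself uses in the positivity proof) is a harmless repackaging of this. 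Likewise, feeding $Z_n=|X_n|^\alpha$, $u_n\equiv 0$, $v_n$ proportional to $h_n g^2(X_n)|X_n|^\alpha$ into Lemma \ref{lem:nonegdif} is the paper's argument; the integrability hypotheses of that lemma, which you gloss over, are checked in the paper via Lemma \ref{lem:|xn|}, a routine point.

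The genuine gap is the final step, which you yourself flag: you never prove that the a.s.\ limit is zero, you only sketch a conditional Borel--Cantelli/non-degeneracy argument and state that ``the real work lies'' there. As written, the proposal therefore does not prove the theorem. The paper closes this step with a short contradiction that needs no Borel--Cantelli and no use of the non-degeneracy of the increments: on the event where $a_0:=\lim_{n\to\infty}|X_n|>0$, the trajectory eventually lies in $[\tfrac12 a_0,\tfrac32 a_0]$, a compact set bounded away from zero; continuity of $f$ and $g$ (which the paper explicitly invokes at this point, over and above Assumption \ref{as:fg}) yields finite random bounds $f(X_n)\le H_1$, $g^2(X_n)\le H_2$, hence $h_n\ge \bar h/(1+H_1+H_2)$ and $\sum_n h_n=\infty$, together with a positive lower bound $g^2(X_n)\ge H_3>0$; consequently $\sum_n v_n\ge c\,H_3\sum_n h_n=\infty$, contradicting the a.s.\ convergence of $\sum_n v_n$ delivered by Lemma \ref{lem:nonegdif}. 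Note that your own intermediate deduction already does most of the work: from $\sum_n h_n g^2(X_n)<\infty$ and $|X_n|\to\ell>0$ you correctly conclude $g^2(X_n)\to 0$, and with the same continuity-on-compacts fact this is already a contradiction, so the ``$h_n=\bar h$ eventually'' reduction and the proposed Borel--Cantelli dichotomy are unnecessary. Your instinct that Assumption \ref{as:fg} alone (with no continuity, hence no positive lower bound for $g^2$ near $\ell$) does not obviously suffice is fair -- a diffusion coefficient decaying extremely fast near some point $\ell\neq 0$ makes the ``sticky limit'' scenario genuinely hard to exclude -- but the paper resolves this by using continuity of $f$ and $g$, not by the argument you outline; attempting the dichotomy without any regularity is where your proposal stalls, and it is precisely the step that carries the content of the theorem.
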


\begin{theorem}\label{thm:ASinstab}
Let $\{X_n\}_{n\in\mathbb{N}}$ be a solution of \eqref{eq:Euler} with initial value $X_0=\varsigma>0$, and the sequence $\{h_n\}_{n\in\mathbb{N}}$ satisfies \eqref{def:hnTrue}. Suppose that $f$ and $g$ satisfy the conditions of Assumption \ref{as:fg} and there exists $\gamma>1$ such that
\begin{equation}
\label{cond:instabgamma}
\liminf_{u\to 0}\frac {2f(u)}{g^2(u)}=\gamma.
\end{equation}
Then there exists $h_0>0$ such that for all $\bar h<h_0$,
\[
\mathbb{P}\left[\lim_{n\to\infty}X_n=0\right]=0.
\]
\end{theorem}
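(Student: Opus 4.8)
The plan is to adapt the instability argument of \cite{ABR1} --- which applies a discrete It\^o formula to the test function $\phi(u)=|u|^{-\alpha}$ for small $\alpha\in(0,1)$ --- to the present adaptive scheme, where the Wiener increments are no longer independent. First I would normalise \eqref{eq:Euler}. Writing $\triangle W_{n+1}=\sqrt{h_n}\,\xi_{n+1}$, Remark \ref{rem:condMoments} shows $\xi_{n+1}$ is $\mathcal F_{t_n}$-conditionally $N(0,1)$; setting
\[
\widetilde\Phi(u):=\frac{f(u)}{1+\lfloor f(u)\rfloor+\lfloor g^2(u)\rfloor},\qquad
\widetilde\Gamma(u):=\frac{g(u)}{\sqrt{1+\lfloor f(u)\rfloor+\lfloor g^2(u)\rfloor}},
\]
one has $0\le\widetilde\Phi\le1$, $0\le\widetilde\Gamma\le1$, the identity $2\widetilde\Phi(u)/\widetilde\Gamma^2(u)=2f(u)/g^2(u)$ for $u\neq0$, and, since $h_nf(X_n)=\bar h\widetilde\Phi(X_n)$ and $g(X_n)\triangle W_{n+1}=\sqrt{\bar h}\,\widetilde\Gamma(X_n)\xi_{n+1}$, the scheme \eqref{eq:Euler} with \eqref{def:hnTrue} becomes
\[
X_{n+1}=X_n\bigl(1+\bar h\,\widetilde\Phi(X_n)+\sqrt{\bar h}\,\widetilde\Gamma(X_n)\,\xi_{n+1}\bigr),\qquad X_0=\varsigma>0.
\]
Since $X_n$ is $\mathcal F_{t_n}$-measurable and $g(u)\neq0$ for $u\neq0$, the bracketed factor vanishes only on a conditional null set, so $X_n\neq0$ a.s.\ for all $n$ and $|X_n|^{-\alpha}$ is a.s.\ well defined.

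The technical core is a conditional version of Lemma \ref{thm:Idiscrto}: re-running its proof with $\mathbb E[\,\cdot\mid\mathcal F_{t_n}]$ in place of the ordinary expectation, using the conditional moments of $\triangle W_{n+1}$ from Remark \ref{rem:condMoments} and the conditional Gaussian density in the off-support estimate, gives, for $\phi(u)=|u|^{-\alpha}$ (which satisfies the hypotheses of Lemma \ref{thm:Idiscrto} as in \cite{ABR1}) and all sufficiently small $\bar h$,
\[
\mathbb E\bigl[\,|1+\bar h\widetilde\Phi(X_n)+\sqrt{\bar h}\widetilde\Gamma(X_n)\xi_{n+1}|^{-\alpha}\bigm|\mathcal F_{t_n}\,\bigr]
=1-\alpha\bar h\widetilde\Phi(X_n)+\tfrac{\alpha(\alpha+1)}{2}\bar h\widetilde\Gamma^2(X_n)+R_n,
\]
with $|R_n|\le K\bar h^{3/2}(\widetilde\Phi(X_n)+\widetilde\Gamma^2(X_n))$ a.s. Fix $\varepsilon\in(0,\gamma-1)$ and $\alpha\in(0,\min\{1,\gamma-1-\varepsilon\})$; by \eqref{cond:instabgamma} there is $\delta>0$ with $2f(u)/g^2(u)>\gamma-\varepsilon$ for $0<|u|<\delta$. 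On $\{0<|X_n|<\delta\}$, dividing by $\widetilde\Gamma^2(X_n)>0$ and using $\widetilde\Phi/\widetilde\Gamma^2=f/g^2$, the requirement that the right-hand side be $\le1$ reduces to
\[
\frac{f(X_n)}{g^2(X_n)}\bigl(\alpha-K\bar h^{1/2}\bigr)\ \ge\ \frac{\alpha(\alpha+1)}{2}+K\bar h^{1/2},
\]
which holds for $\bar h$ below some $h_0>0$ because $f/g^2>(\gamma-\varepsilon)/2$ and $(\gamma-\varepsilon)\alpha/2>\alpha(\alpha+1)/2$ by the choice $\alpha+1<\gamma-\varepsilon$. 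Thus $\mathbb E[|X_{n+1}|^{-\alpha}\mid\mathcal F_{t_n}]\le|X_n|^{-\alpha}$ on $\{|X_n|<\delta\}$, whereas the crude bound $|R_n|\le2K\bar h^{3/2}$ together with $\widetilde\Phi,\widetilde\Gamma^2\le1$ gives $\mathbb E[|X_{n+1}|^{-\alpha}\mid\mathcal F_{t_n}]\le(1+C\bar h)|X_n|^{-\alpha}$ for \emph{all} $X_n\neq0$; iterated, the latter makes $\mathbb E|X_n|^{-\alpha}<\infty$ for every $n$.

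The conclusion then follows by a localisation. For $m\in\mathbb N$ put $\sigma_m:=\inf\{n\ge m:\ |X_n|\ge\delta\}$; each $\sigma_m$ is an $\{\mathcal F_{t_n}\}$-stopping time, and the stopped process $Z_n:=|X_{\min(n,\sigma_m)}|^{-\alpha}$, $n\ge m$, is a non-negative integrable $\mathcal F_{t_n}$-supermartingale: on $\{\sigma_m>n\}$ one has $|X_n|<\delta$ and the first inequality of the previous paragraph applies, while on $\{\sigma_m\le n\}$, $Z_{n+1}=Z_n$ is already $\mathcal F_{t_n}$-measurable. Applying Lemma \ref{lem:nonegdif} (to the sequence shifted so it starts at index $0$) with $u_n\equiv0$, $v_n:=Z_n-\mathbb E[Z_{n+1}\mid\mathcal F_{t_n}]\ge0$ and $\mu_{n+1}:=Z_{n+1}-\mathbb E[Z_{n+1}\mid\mathcal F_{t_n}]$ shows $\lim_{n\to\infty}Z_n$ exists and is finite a.s. On $\{\lim_n X_n=0\}$ there is a random index beyond which $|X_n|<\delta$, so $\{\lim_n X_n=0\}\subseteq\bigcup_{m}\{\sigma_m=\infty\}$; but on $\{\lim_n X_n=0\}\cap\{\sigma_m=\infty\}$ we have $Z_n=|X_n|^{-\alpha}\to\infty$, contradicting the a.s.\ finiteness of $\lim_n Z_n$. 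Hence $\mathbb P[\{\lim_n X_n=0\}\cap\{\sigma_m=\infty\}]=0$ for each $m$, and summing over $m$ gives $\mathbb P[\lim_n X_n=0]=0$.

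I expect the main obstacle to be the conditional discrete It\^o formula of the second paragraph, and in particular the verification that the remainder $R_n$ carries the factor $\widetilde\Phi(X_n)+\widetilde\Gamma^2(X_n)$ rather than being merely $O(\bar h^{3/2})$. This matters because the ``drift gain'' $-\alpha\bar h\widetilde\Phi(X_n)+\tfrac{\alpha(\alpha+1)}{2}\bar h\widetilde\Gamma^2(X_n)$ degenerates near the equilibrium (for instance $\widetilde\Gamma(X_n)=g(X_n)\to0$ in \eqref{eq:maincont}), so $R_n$ can only be absorbed after dividing by $\widetilde\Gamma^2(X_n)$, at which point it is the non-degenerate ratio $f/g^2$ --- controlled from below by \eqref{cond:instabgamma} --- that carries the estimate. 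Concretely one checks that the cubic Taylor term contributes $O(\bar h^{3/2})(\widetilde\Phi+\widetilde\Gamma^2)$ (using $t^3\le t$ and $t^3\le t^2$ on $[0,1]$ applied to $\widetilde\Phi,\widetilde\Gamma$) and that the $\int|\phi-\bar\phi|\,dx$ contribution, of order $s^{-1/2}e^{-c/s}$ with $s=\bar h\widetilde\Gamma^2(X_n)\le\bar h$ and $c>0$, is $\le\bar h^{3/2}\widetilde\Gamma^2(X_n)$ once $\bar h$ is small enough that $s\mapsto s^{-3/2}e^{-c/s}$ is increasing on $(0,\bar h]$. A subsidiary point needing care, but no new idea, is the a.s.\ finiteness of $|X_n|^{-\alpha}$ and of the conditional expectations above, which rests on $g(u)\neq0$ for $u\neq0$ and the Gaussian conditional law of $\triangle W_{n+1}$ --- this is also why \eqref{def:hnTrue} is arranged so that each $t_n$ is an $\mathcal F_t$-stopping time.
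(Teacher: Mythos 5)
Your argument is correct, and it shares the paper's technical core: a conditional discrete It\^o-type estimate for $u\mapsto|u|^{-\alpha}$ under the normalisation $h_nf(X_n)=\bar h\widetilde\Phi(X_n)$, $\sqrt{h_n}g(X_n)=\sqrt{\bar h}\widetilde\Gamma(X_n)$, with the remainder proportional to $\bar h^{3/2}(\widetilde\Phi+\widetilde\Gamma^2)$ — this is exactly what the paper's Lemma \ref{lem:Ualbound} establishes (its proof delivers precisely the bounds $\bar h^{2}f h_n$, $\sqrt{\bar h}\,g^2h_n$ and the correction \eqref{est:Delta3} that you anticipate), and your observation that the estimate only becomes usable after dividing by $\widetilde\Gamma^2$ and invoking \eqref{cond:instabgamma} is the right diagnosis. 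Where you genuinely diverge is the endgame. The paper turns the iterated scheme into the product representation \eqref{eq:-alpha1}, normalises each factor by its conditional expectation to obtain the non-negative martingale $M_n$ of Lemma \ref{lem:instconstrmart} (via Lemma \ref{lem:prodMart}), applies Lemma \ref{lem:nnM}, and derives a pathwise contradiction on $\{X_n\to0\}$ using the liminf condition along the trajectory (for $n\ge N(\omega,\delta)$ the conditional factors are $\le1$, so $|X_{n+1}|^\alpha$ stays bounded away from zero). You instead make $|X_n|^{-\alpha}$ a supermartingale on the spatial neighbourhood $\{0<|u|<\delta\}$, localise with the stopping times $\sigma_m$, get a.s. convergence of the stopped process from Lemma \ref{lem:nonegdif}, and contradict $|X_n|^{-\alpha}\to\infty$ on $\{X_n\to0\}\cap\{\sigma_m=\infty\}$, summing over $m$. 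The trade-off: your route requires the integrability $\mathbb E|X_n|^{-\alpha}<\infty$, which you correctly obtain by iterating the crude bound $(1+C\bar h)$ from $\varsigma^{-\alpha}<\infty$, whereas the paper's normalised product avoids any moment bound on $|X_n|^{-\alpha}$ (only a.s. finiteness and positivity of the conditional expectations are needed) at the cost of the extra martingale construction; your stopping-time localisation is arguably more standard and self-contained, while the paper's representation \eqref{eq:-alpha1} gives a slightly more quantitative pathwise lower bound. Both uses of \eqref{cond:instabgamma} (uniformly on $\{0<|u|<\delta\}$ versus along trajectories in $\Omega_2$) are legitimate, so I see no gap.
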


\subsection{Positivity}
In this section we suppose that the number of timesteps $N\in\mathbb{N}$ is fixed, and provide conditions for a lower bound on the probability that the corresponding solution values of \eqref{eq:Euler} with adaptive timesteps satisfying \eqref{def:hnTrue} remain positive. If the estimate is pre-determined then its value influences a constraint on the range of possible values of the parameter $\bar h$.
\begin{theorem}\label{thm:pos}
\label{thm:pos}
Let $\{X_n\}_{n\in\mathbb{N}}$ be a solution of \eqref{eq:Euler} with initial value $X_0=\varsigma>0$, and the sequence $\{h_n\}_{n\in\mathbb{N}}$ satisfies \eqref{def:hnTrue}. Suppose that $f$ and $g$ satisfy the conditions of Assumption \ref{as:fg}. Then, for each $\varepsilon\in(0,1)$ there exists  $\bar h(\varepsilon)>0$ such that, for all $\bar h\in (0, \bar h(\varepsilon))$
\[
\mathbb P[X_N>0, X_{N-1}>0, \dots, X_{0}>0]>1-\varepsilon.
\]
\end{theorem}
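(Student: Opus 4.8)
The plan is to show that the modified stepsize rule \eqref{def:hnTrue} keeps the \emph{conditional} variance of the diffusion increment bounded above by $\bar h$, so that each one-step update in \eqref{eq:Euler} has only a small probability of driving the solution through zero, and then to sum these small probabilities over the $N$ steps.

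First I would reduce the statement to a one-step estimate. Writing $A_n:=\{X_0>0,X_1>0,\dots,X_n>0\}$, observe that on $A_n$ we have $X_n>0$, hence $g(X_n)>0$ by Assumption \ref{as:fg}, and since $h_nf(X_n)\ge 0$ the factor $1+h_nf(X_n)+g(X_n)\Delta W_{n+1}$ is positive whenever $g(X_n)\Delta W_{n+1}>-1$. Thus
\[
A_n\cap\{X_{n+1}\le 0\}\ \subseteq\ A_n\cap\{g(X_n)\Delta W_{n+1}\le -1\}.
\]
Next, from \eqref{def:hnTrue} and $\lfloor g^2(X_n)\rfloor+1>g^2(X_n)$ one gets $h_ng^2(X_n)<\bar h$, while by Remark \ref{rem:condMoments} the increment $\Delta W_{n+1}$ is, conditionally on $\mathcal F_{t_n}$, normal with mean $0$ and variance $h_n$. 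Hence $g(X_n)\Delta W_{n+1}$ is $\mathcal F_{t_n}$-conditionally centred Gaussian with variance $h_ng^2(X_n)<\bar h$, so that a.s.
\[
\mathbb P\bigl[g(X_n)\Delta W_{n+1}\le -1\mid\mathcal F_{t_n}\bigr]\ \le\ q(\bar h):=\frac{1}{\sqrt{2\pi}}\int_{-\infty}^{-\bar h^{-1/2}}e^{-x^2/2}\,dx,
\]
with $q(\bar h)\downarrow 0$ as $\bar h\downarrow 0$.

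Then I would telescope. Each $X_k$ is $\mathcal F_{t_k}$-measurable, hence $\mathcal F_{t_n}$-measurable for $k\le n$ since $t_k\le t_n$, so $A_n\in\mathcal F_{t_n}$; therefore
\[
\mathbb P[A_n\setminus A_{n+1}]\ \le\ \mathbb E\bigl[\mathbf 1_{A_n}\,\mathbb P[g(X_n)\Delta W_{n+1}\le -1\mid\mathcal F_{t_n}]\bigr]\ \le\ q(\bar h).
\]
Since $A_0=\Omega$ and $A_{n+1}\subseteq A_n$, the complement $A_N^c$ is the disjoint union of the sets $A_n\setminus A_{n+1}$, $0\le n\le N-1$, so $\mathbb P[A_N^c]\le Nq(\bar h)$. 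Given $\varepsilon\in(0,1)$, the fact that $q$ is increasing with $q(\bar h)\to 0$ as $\bar h\to 0^+$ lets me pick $\bar h(\varepsilon)>0$ small enough that $Nq(\bar h)<\varepsilon$ for all $\bar h\in(0,\bar h(\varepsilon))$, and then $\mathbb P[X_N>0,\dots,X_0>0]=\mathbb P[A_N]\ge 1-Nq(\bar h)>1-\varepsilon$.

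The argument is short, and I expect the only delicate point to be the measurability bookkeeping forced by the random mesh: that $A_n\in\mathcal F_{t_n}$, and that the conditional law of $\Delta W_{n+1}$ given $\mathcal F_{t_n}$ is $N(0,h_n)$ — the latter is precisely what Remark \ref{rem:condMoments} supplies and is the reason the stopping-time construction underlying \eqref{def:hnTrue} is needed here. I would also note that the estimate deliberately discards the (non-negative, possibly large) drift term, so the threshold $\bar h(\varepsilon)$ obtained is conservative, and that since the Gaussian tail $q(\bar h)$ decays faster than any power of $\bar h$, $\bar h(\varepsilon)$ need not itself be small.
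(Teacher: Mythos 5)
Your proof is correct, and its core estimate is the same as the paper's: on the event that the path is still positive, the next iterate can only become nonpositive if the Gaussian increment falls below a threshold which, by the bound $\sqrt{h_n}\,g(X_n)\le\sqrt{\bar h}$ coming from \eqref{def:hnTrue} (cf.\ \eqref{prop: fgh}) and the conditional normality of $\Delta W_{n+1}$ given $\mathcal F_{t_n}$ supplied by Remark \ref{rem:condMoments}, is at most $-1/\sqrt{\bar h}$; like the paper, you discard the nonnegative drift term. The only genuine difference is how the one-step bound is aggregated over the $N$ steps: you decompose the failure event $A_N^c$ into the disjoint first-exit slices $A_n\setminus A_{n+1}$ and apply a union bound, giving $\mathbb P[A_N]\ge 1-N\,q(\bar h)$ with $q(\bar h)=1-\Phi(1/\sqrt{\bar h})$ (here $\Phi$ is the standard normal distribution function), whereas the paper writes $\mathbb P[\mathcal R_N]$ as a product of conditional survival probabilities and obtains $\bigl(\Phi(1/\sqrt{\bar h})\bigr)^N$, which is sharper by Bernoulli's inequality and leads directly to the explicit threshold $\bar h(\varepsilon)=\bigl(\Phi^{-1}\bigl[(1-\varepsilon)^{1/N}\bigr]\bigr)^{-2}$ and the computable variant in Remark \ref{rem:pos}. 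So your route yields a slightly more conservative $\bar h(\varepsilon)$, but it is arguably cleaner in its measure-theoretic bookkeeping: you condition on $\mathcal F_{t_n}$ and use $A_n\in\mathcal F_{t_n}$ explicitly, while the paper conditions somewhat informally on ``$\mathcal F_{t_i}\cap\mathcal R_i$''. Either way the theorem follows, and your identification of the stopping-time structure behind Remark \ref{rem:condMoments} as the genuinely delicate ingredient is exactly right.
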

The value of $\bar h(\varepsilon)$ may itself be estimated: see Remark \ref{rem:pos}.

\section{A numerical example}
\label{sec:num}
The equation \eqref{eq:maincont} with $\nu=2$ and $\varsigma=1$, may be written
\begin{equation}
dX(t)=X^3(t)dt+\sigma X^2(t)dW_t,\quad t\geq 0,\quad X(0)=1.\label{eq:polynum}
\end{equation}
We will use the adaptive timestepping rule defined by \eqref{def:hnTrue} to approximate trajectories of \eqref{eq:polynum} via the explicit Euler-Maruyama method:
\begin{equation}\label{eq:polynumEM}
X_{n+1}=X_n+h_nX_n^3+\sigma X_n^2 \triangle W_{n+1},\quad n\geq 0,\quad X_0=1.
\end{equation}
Note that $2f(u)/g^2(u)=2/\sigma^2<(>)1$ if $\sigma>(<)\sqrt{2}$. 

In Figure \ref{fig:plotsUnstab} we illustrate the case where $\sigma<\sqrt{2}$, displaying a trajectory and the corresponding time series of stepsizes taken by the method for $\sigma=0$ (first row), and $\sigma=1$ (third row) with $\bar h=1$. Instability and an apparent finite-time explosion is observed in each case, and each trajectory has only positive values, as expected. In the second row, we have reduced the maximum timestep for the $\sigma=0$ trajectory to $\bar h=0.1$, and we observe that the apparent explosion time now occurs closer to $\tau_e^\varsigma=0.5$, as computed for the underlying SDE via \eqref{eq:tau}.

The average stepsize for the $\sigma=0$ trajectory with $\bar h=1$ is 0.0017706, with $\bar h=0.1$ is $1.0832\times 10^{-4}$, and for the $\sigma=1$ trajectory it is $6.3593\times 10^{-6}$. We note that the timesteps quickly approach zero as the trajectory grows and so in the second column of Figure \ref{fig:plotsUnstab} we only show the magnitude of the first $100$ timesteps for clarity of presentation.

In Figure \ref{fig:plotsStab} we illustrate the case where $\sigma>\sqrt{2}$, displaying a trajectory and the corresponding time series of stepsizes taken by the method for $\sigma=2$ (first row), and $\sigma=3$ (second row) with $\bar h=1$. Apparent asymptotic stability is observed in each case, though negative values are observed when $\sigma=3$. By reducing the maximum $\bar h$ to $0.1$ we are able to preserve the positivity of the path (third row). Note that, as each trajectory approaches the equilibrium at zero, the adaptive steps settle at a value close to the maximum possible value $\bar h$. 

The average stepsize for the $\sigma=2$ trajectory is 0.96442, for the $\sigma=3$ trajectory with $\bar h=1$ it is $0.38223$, and for the $\sigma=3$ trajectory with $\bar h=0.1$ it is $0.097674$. We show the magnitude of all computed timesteps in the second column of Figure \ref{fig:plotsStab}.

In all cases, note that the timesteps are indexed against step number, whereas the trajectory is plotted in time. Since the mesh is non-uniform and different for each trajectory, care must be taken when making comparisons.

\begin{figure}
\begin{center}
$\begin{array}{@{\hspace{-0.3in}}c@{\hspace{-0.3in}}c}
\mbox{\bf\small Trajectory} & \mbox{\bf\small Stepsizes}\\
\scalebox{0.36}{\includegraphics{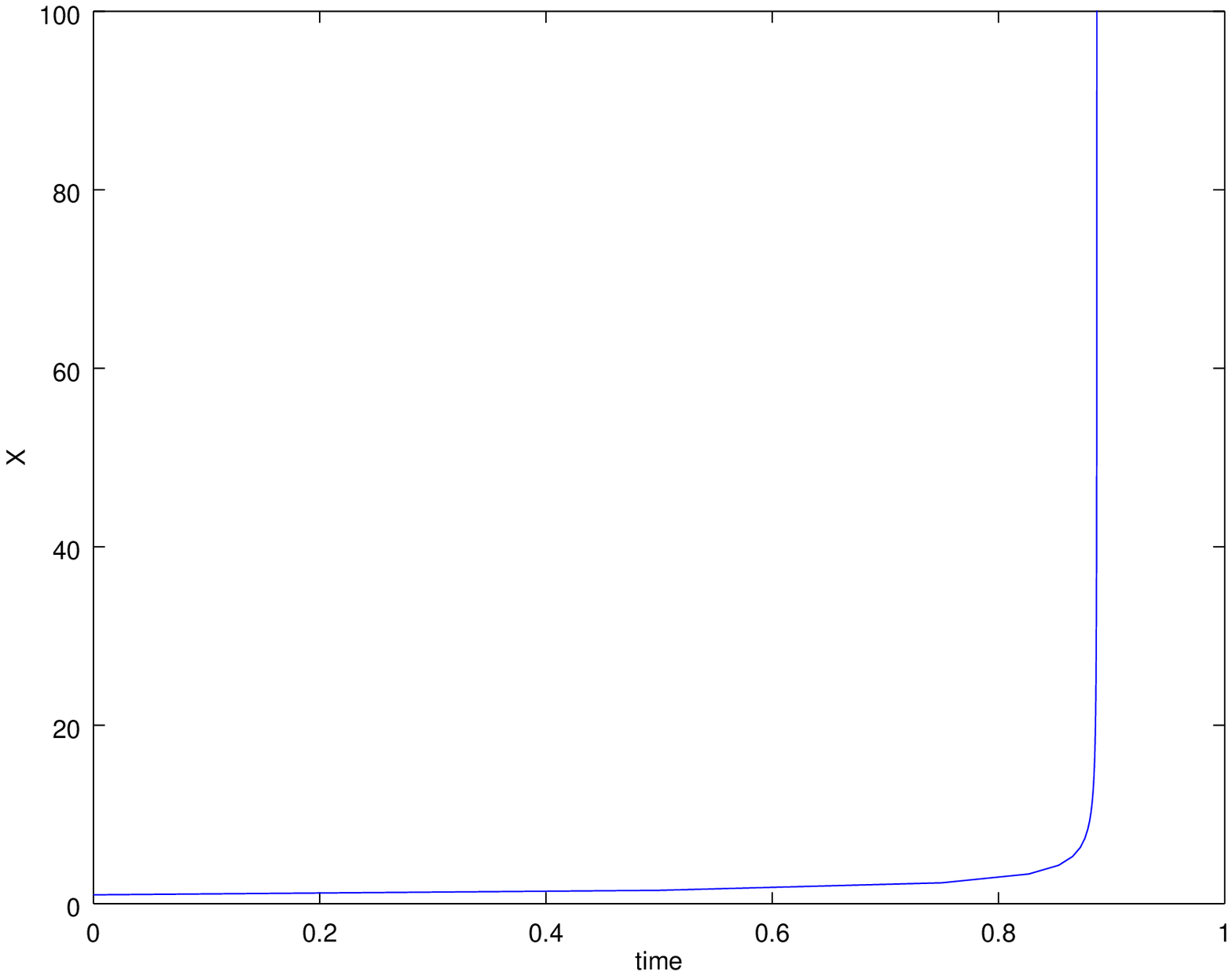}} & \scalebox{0.36}{\includegraphics{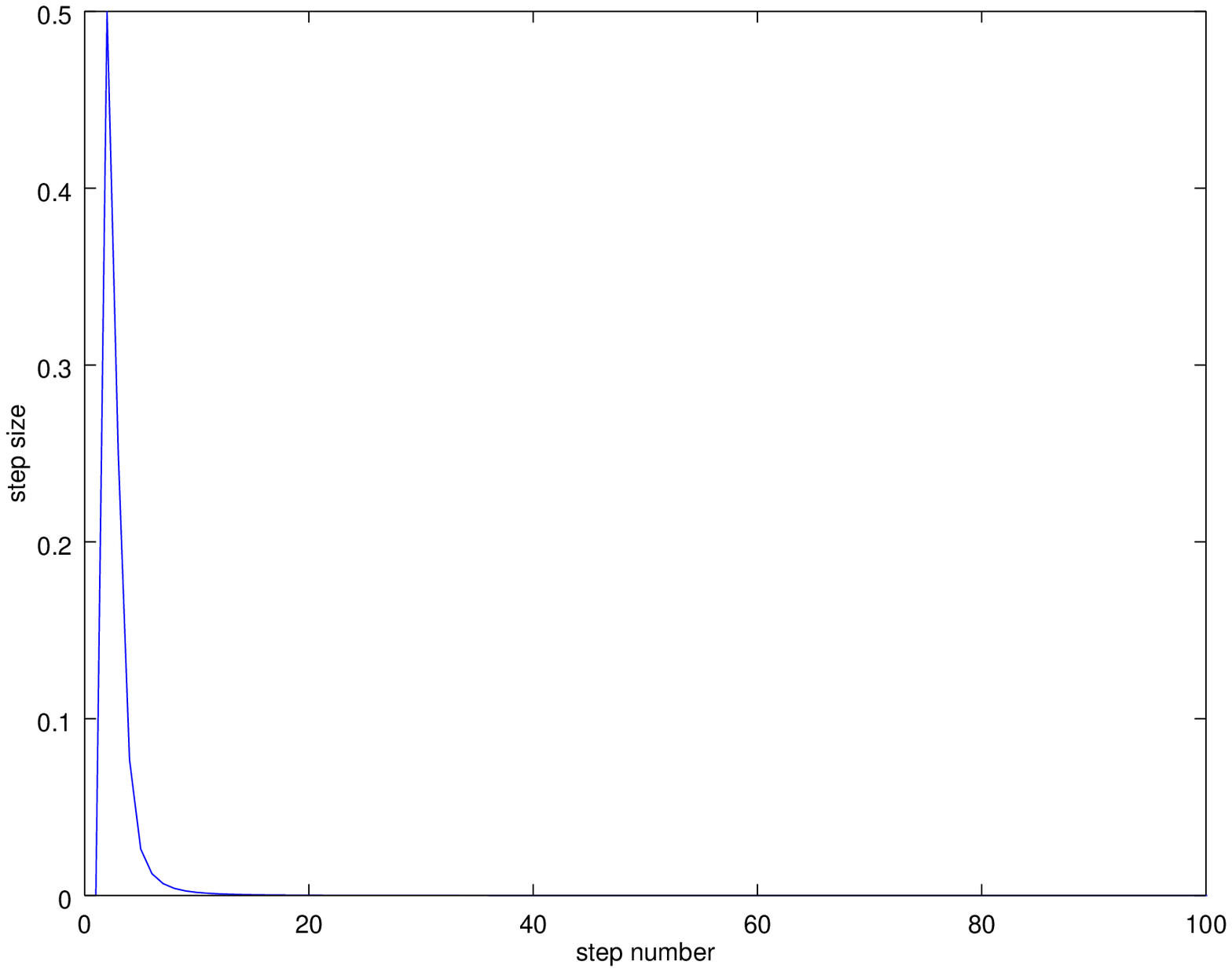}}\\ \mbox{\bf\small $\sigma=0$, $\bar h=1$} & \mbox{\bf\small $\sigma=0$, $\bar h=1$}\\
\scalebox{0.36}{\includegraphics{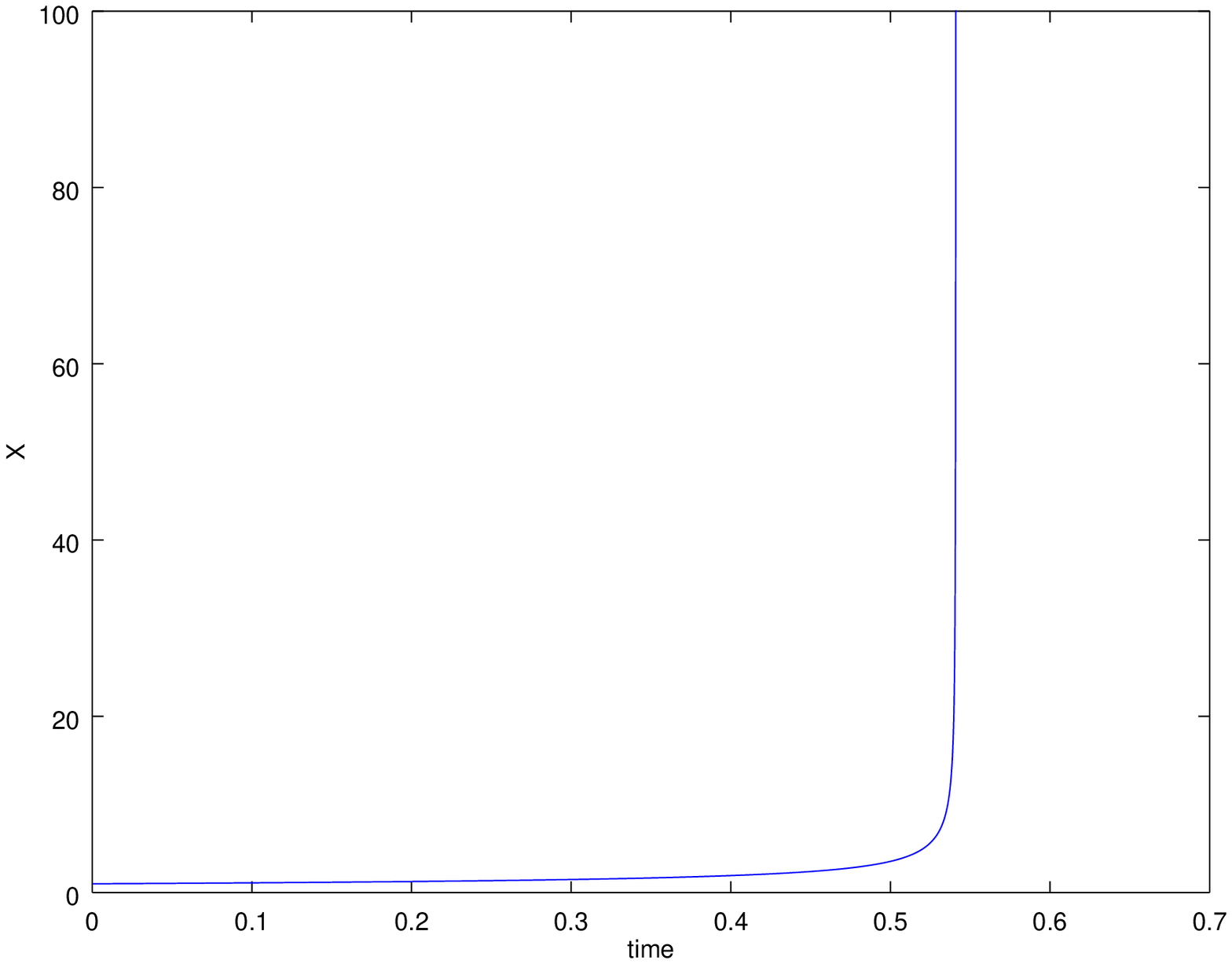}} & \scalebox{0.36}{\includegraphics{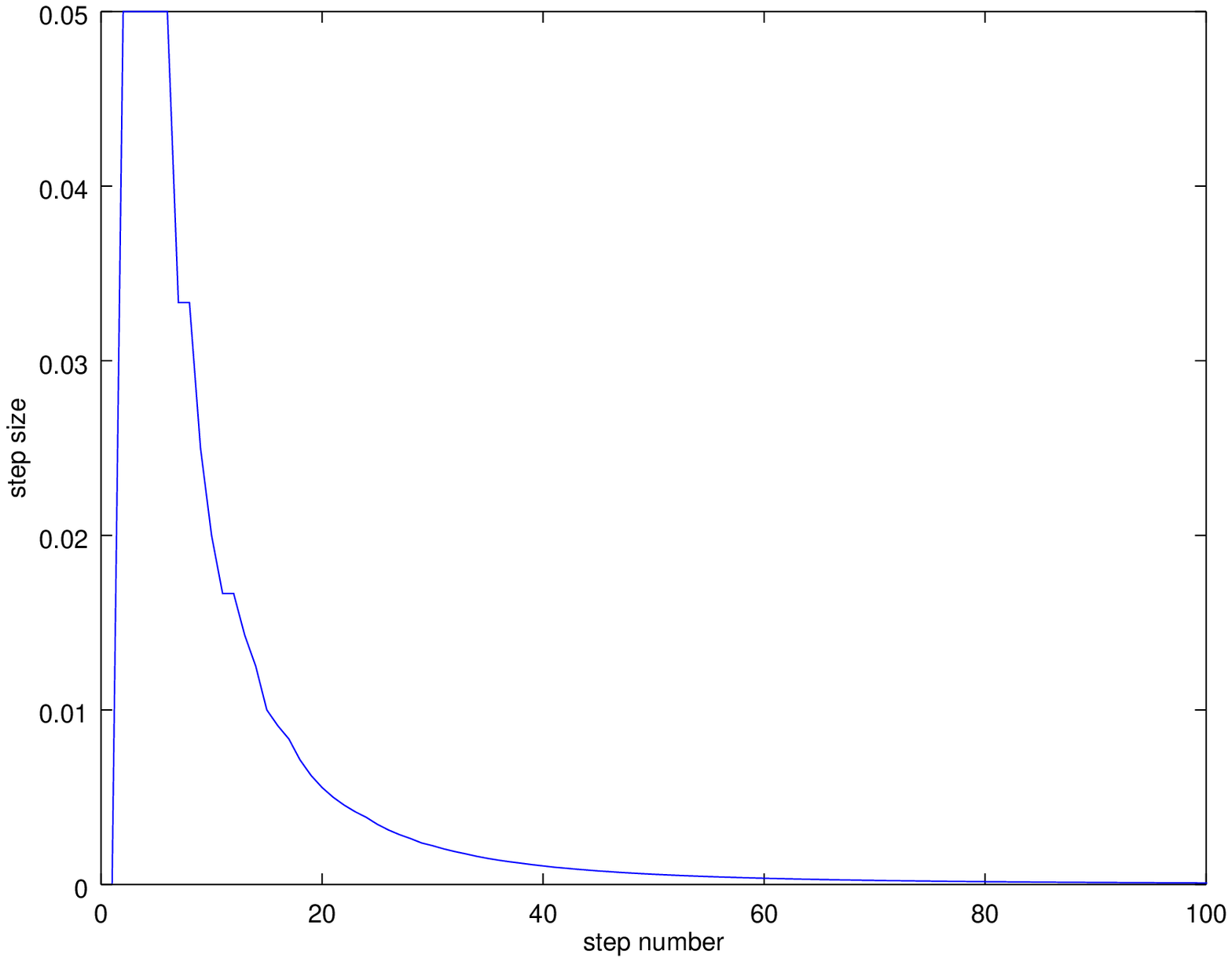}}\\ \mbox{\bf\small $\sigma=0$, $\bar h=0.1$} & \mbox{\bf\small $\sigma=0$, $\bar h=0.1$}\\
\scalebox{0.36}{\includegraphics{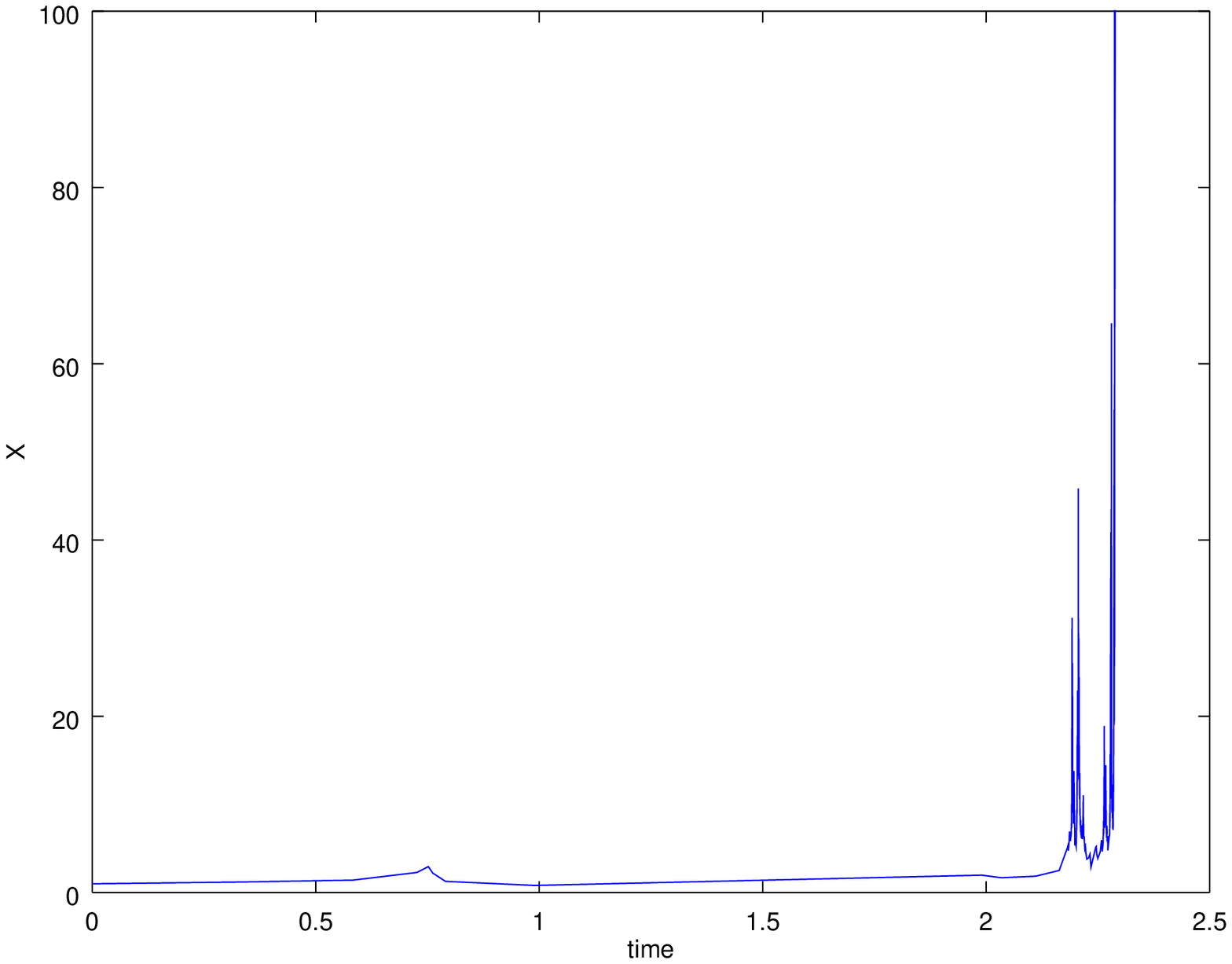}} & \scalebox{0.36}{\includegraphics{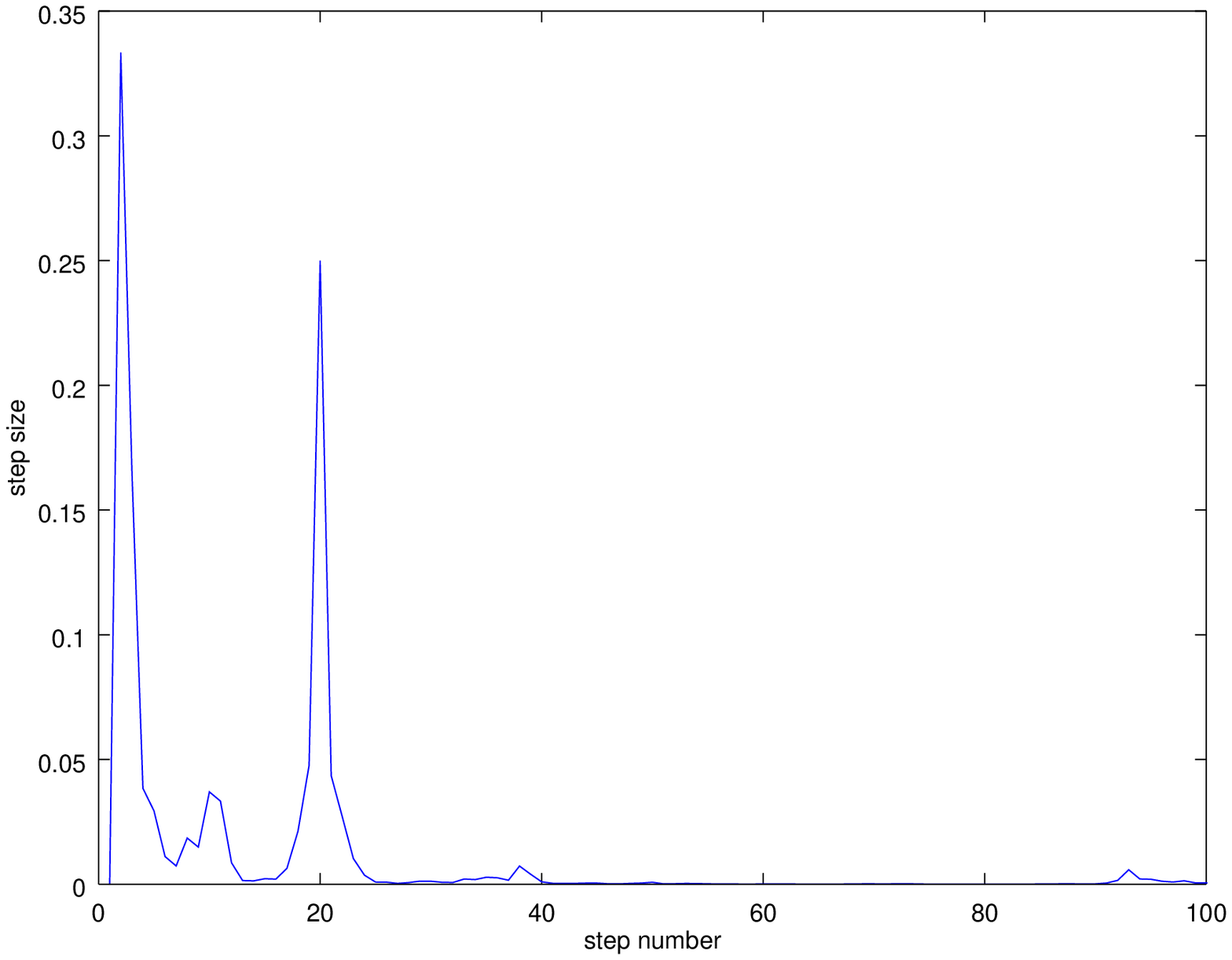}}\\ \mbox{\bf\small $\sigma=1$, $\bar h=1$} & \mbox{\bf\small $\sigma=1$, $\bar h=1$}\\\end{array}$
\end{center}
\caption{Trajectories and adaptive timestep sizes for \eqref{eq:polynumEM} with $\sigma=0$ (first row) and $\sigma=1$ (second row).}\label{fig:plotsUnstab}
\end{figure}
\begin{figure}
\begin{center}
$\begin{array}{@{\hspace{-0.3in}}c@{\hspace{-0.3in}}c}
\mbox{\bf\small Trajectory} & \mbox{\bf\small Stepsizes}\\
\scalebox{0.36}{\includegraphics{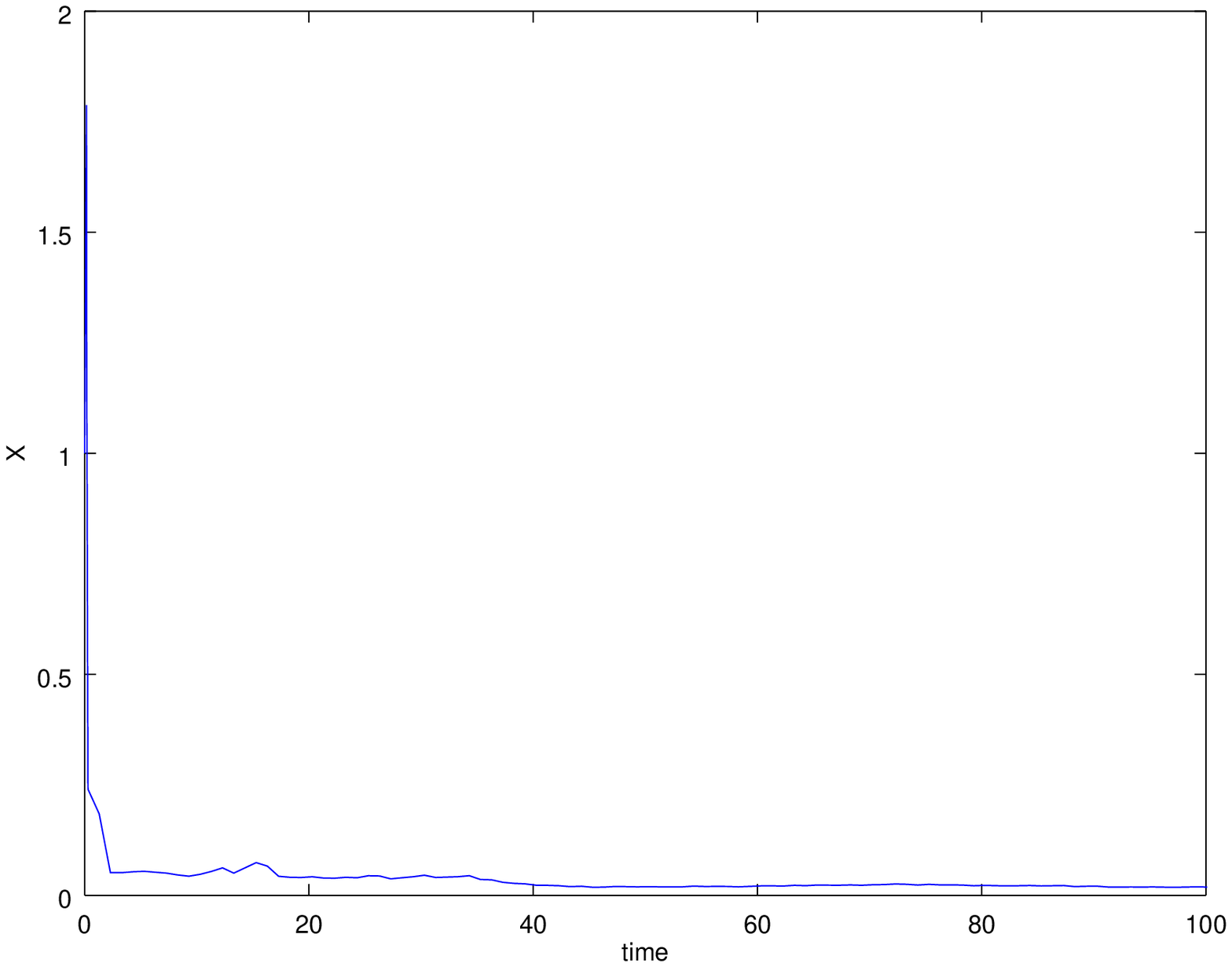}} & \scalebox{0.36}{\includegraphics{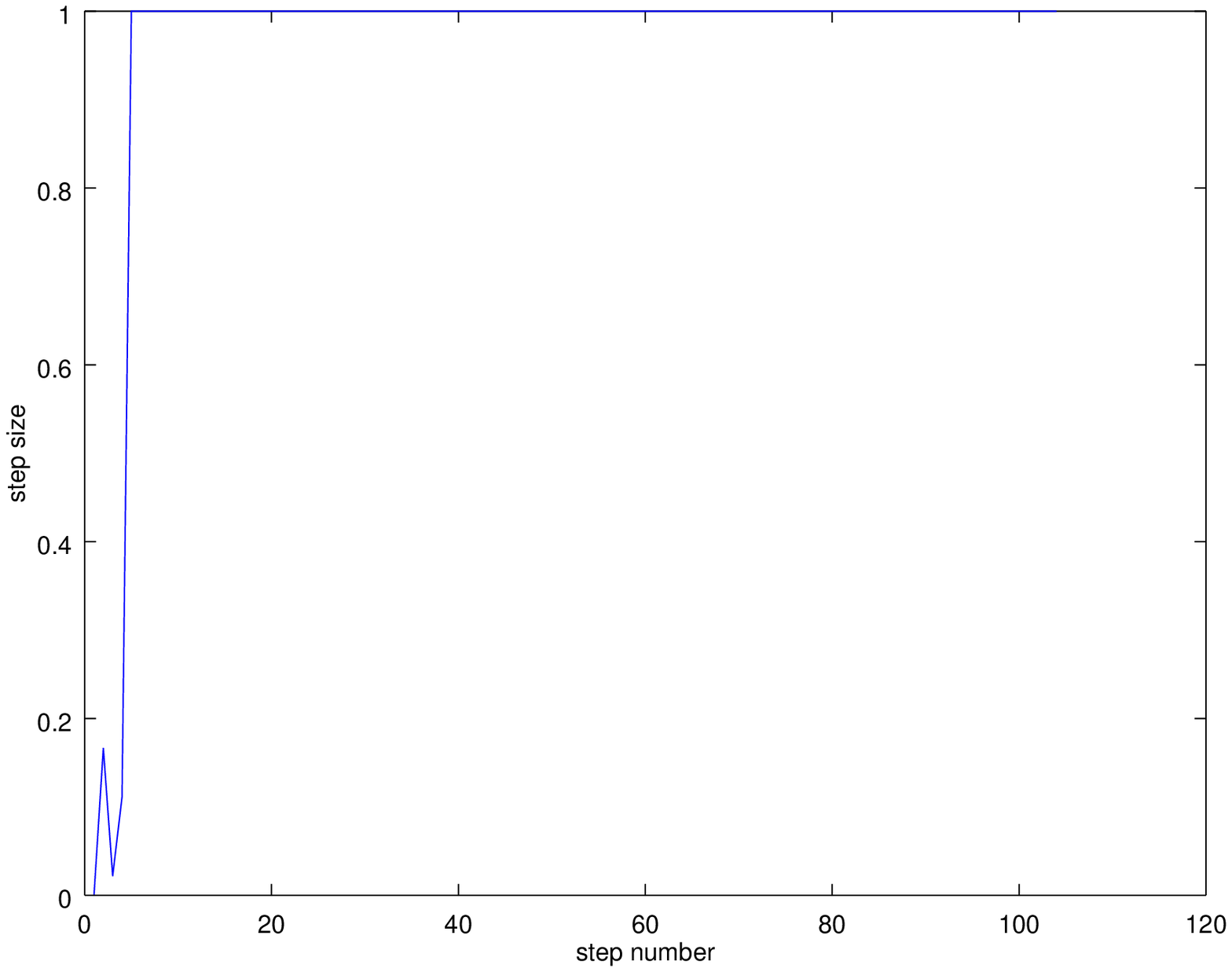}}\\ \mbox{\bf\small $\sigma=2$, $\bar h=1$} & \mbox{\bf\small $\sigma=2$, $\bar h=1$}\\
\scalebox{0.36}{\includegraphics{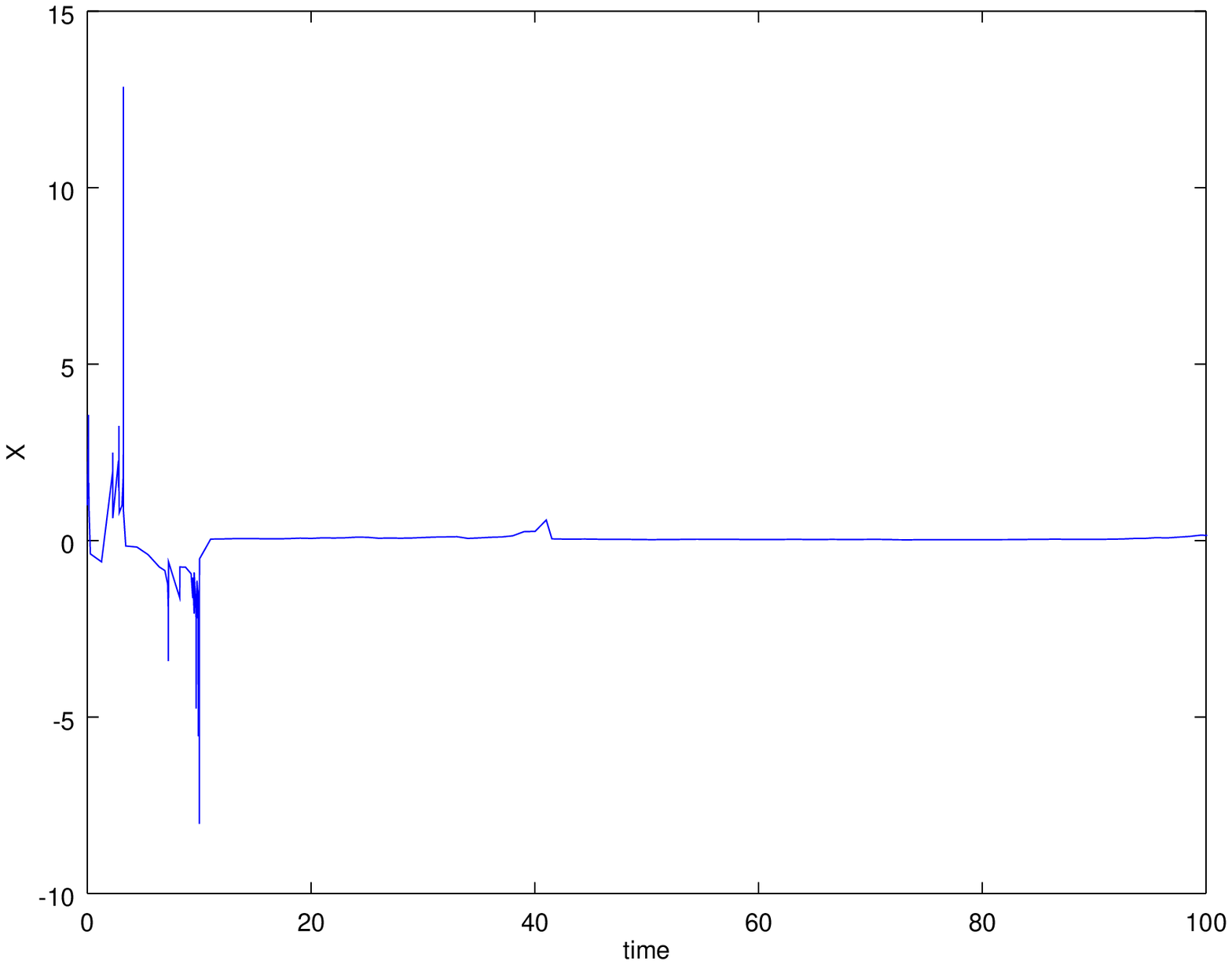}} & \scalebox{0.36}{\includegraphics{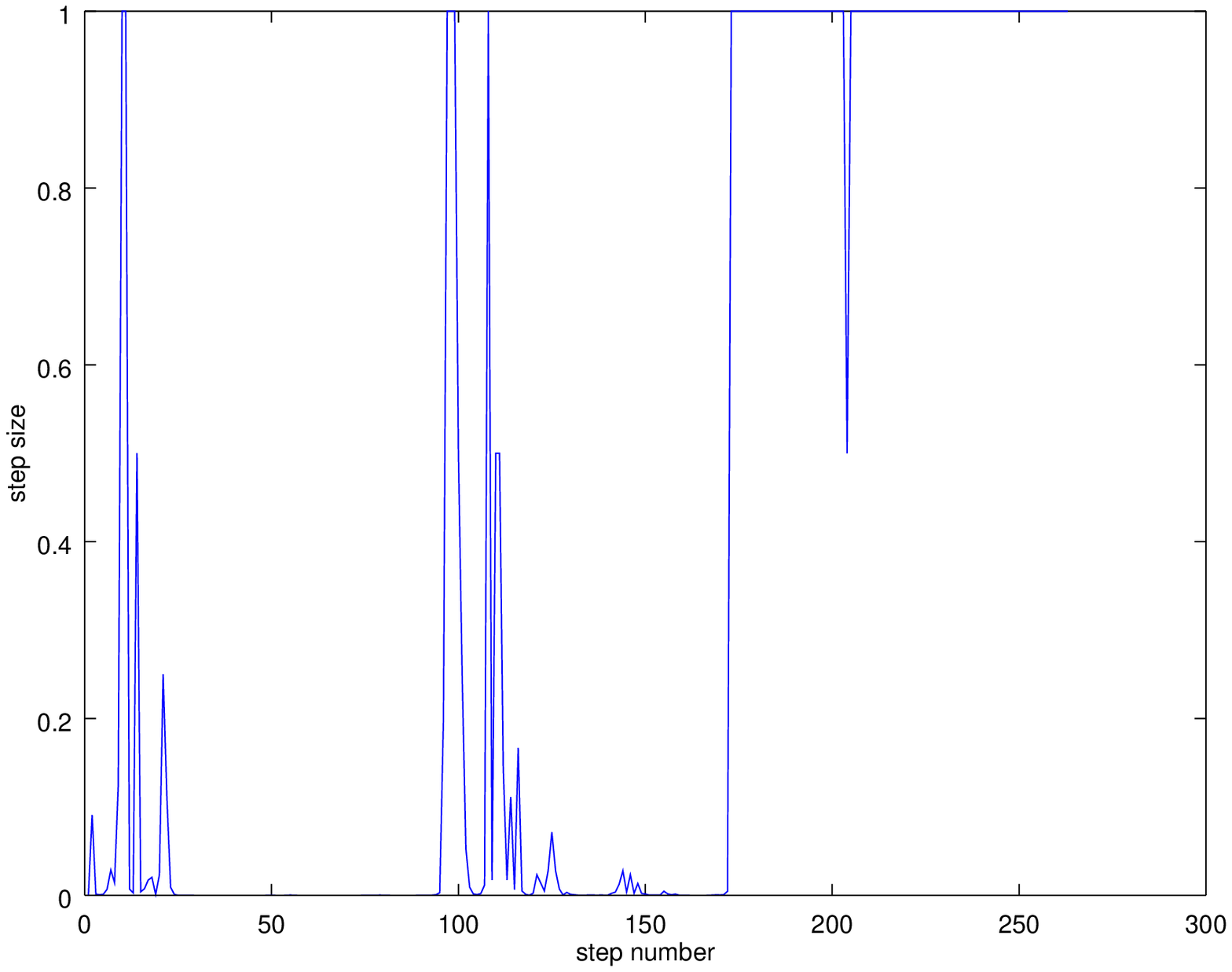}}\\ \mbox{\bf\small $\sigma=3$, $\bar h=1$} & \mbox{\bf\small $\sigma=3$, $\bar h=1$}\\
\scalebox{0.36}{\includegraphics{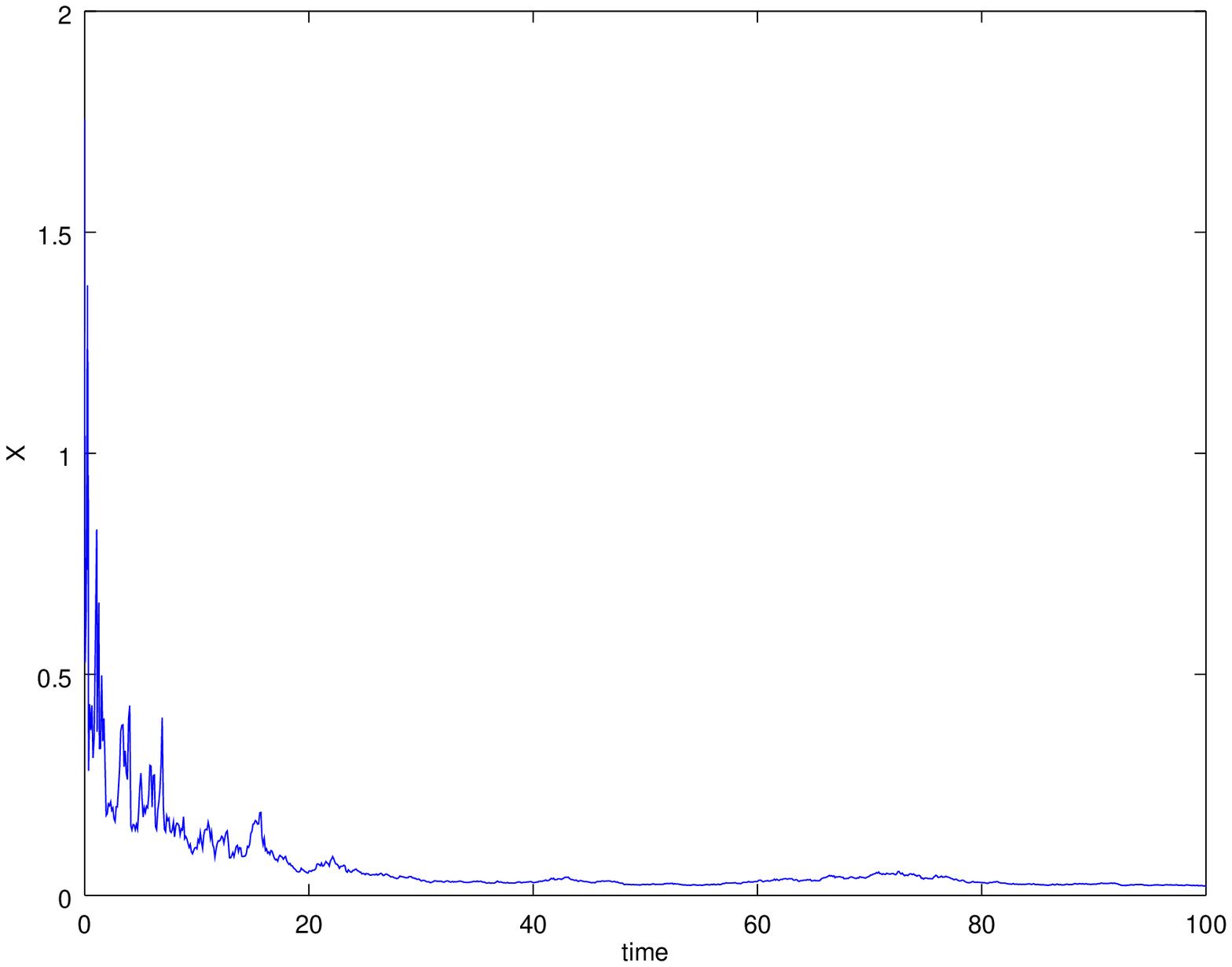}} & \scalebox{0.36}{\includegraphics{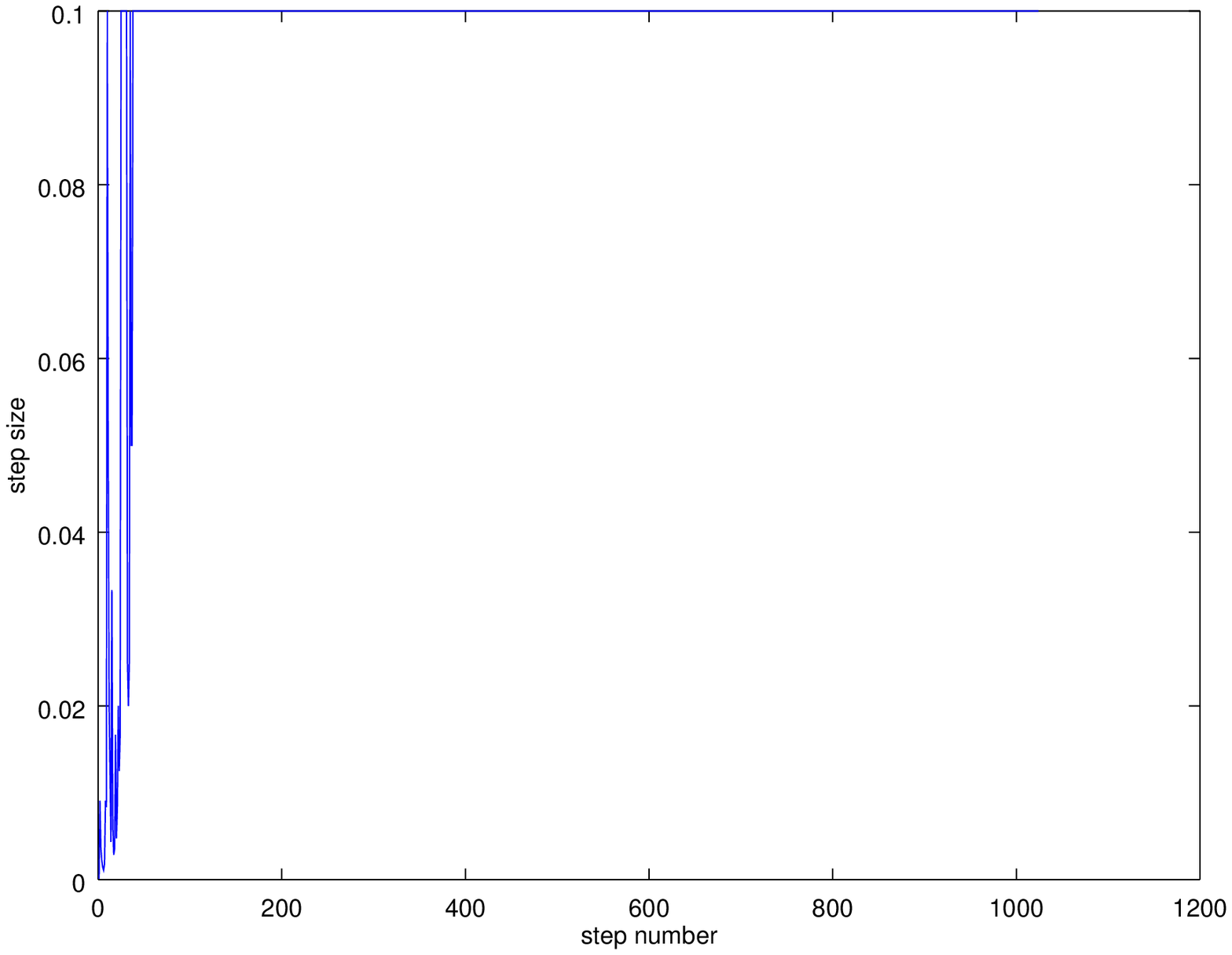}}\\ \mbox{\bf\small $\sigma=3$, $\bar h=0.1$} & \mbox{\bf\small $\sigma=3$, $\bar h=0.1$}\\

\end{array}$
\end{center}
\caption{Trajectories and adaptive timestep sizes for \eqref{eq:polynumEM} with $\sigma=2$ (first row) and $\sigma=3$ (second and third rows).}\label{fig:plotsStab}
\end{figure}

\section{Proofs}
\label{sec:proofs}
\subsection{Technical construction}
We begin with the following useful lemma which may be found in, for example, \cite{RD}.
\begin{lemma}
\label{lem:3mom}
Let $Y$ be a random variable with probability distribution function $\Phi$ and bounded third moment: $\mathbb E |Y|^3\le K$. Then
\[
\Phi(u)|u|^3\le K, \quad \text{for all}\,\, u<0.
\]
\end{lemma}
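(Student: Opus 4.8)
The plan is to recognise this as a routine Chebyshev--Markov tail estimate. Fix $u<0$ and write $a=|u|=-u>0$. Since $Y\le u$ forces $|Y|\ge a$, we have the inclusion of events $\{Y\le u\}\subseteq\{|Y|\ge a\}$, and hence $\Phi(u)=\mathbb P[Y\le u]\le\mathbb P[|Y|\ge a]$.

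Next I would pass to third powers: because $t\mapsto t^3$ is increasing on $[0,\infty)$, the event $\{|Y|\ge a\}$ coincides with $\{|Y|^3\ge a^3\}$, so that $\mathbb P[|Y|\ge a]=\mathbb P[|Y|^3\ge a^3]$. Applying the classical Markov inequality to the non-negative random variable $|Y|^3$ with threshold $a^3>0$ gives $\mathbb P[|Y|^3\ge a^3]\le \mathbb E|Y|^3/a^3\le K/a^3$. Chaining these bounds yields $\Phi(u)\le K/a^3=K/|u|^3$, and multiplying through by $|u|^3>0$ delivers the claimed inequality $\Phi(u)|u|^3\le K$ for every $u<0$.

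There is no real obstacle here: the only point requiring a word of care is that the argument needs $u<0$ strictly (so that $|u|>0$ and division by $|u|^3$ is legitimate, and so that $\{Y\le u\}\subseteq\{|Y|\ge|u|\}$), which is exactly the hypothesis of the statement. For completeness one can remark that the same reasoning with $\{Y\ge u\}$ gives the symmetric bound $(1-\Phi(u)^-)|u|^3\le K$ for $u>0$, but this is not needed in the sequel. I would therefore present the proof in three short lines, citing Markov's inequality as the only external ingredient, consistent with the statement that the lemma "may be found in, for example, \cite{RD}."
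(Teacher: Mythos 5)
Your proof is correct: the chain $\Phi(u)=\mathbb P[Y\le u]\le\mathbb P[|Y|^3\ge|u|^3]\le\mathbb E|Y|^3/|u|^3\le K/|u|^3$ for $u<0$ is exactly the routine Markov-inequality argument this lemma calls for. The paper itself gives no proof, only the citation to \cite{RD}, and your three-line derivation is precisely the standard argument that reference supplies, so there is nothing further to reconcile.
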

\begin{lemma}
Let $\{X_n\}_{n\in\mathbb{N}}$ be a solution of \eqref{eq:Euler}, where $f$ and $g$ satisfy Assumption \ref{as:fg} and $h_n$ is defined as in  \eqref{def:hnTrue}. Then the following estimates hold on all trajectories:
\begin{equation}
\label {prop: fgh}
\begin{split}
h_{n}\in&(0, h);\\
h_{n} f(X_n)=&\frac {\bar hf(X_n)}{1+\lfloor f(x_n)\rfloor +\lfloor g^2(X_n)\rfloor }
\le \bar h;\\
\sqrt{h_{n} }g(X_n)=&\frac {\sqrt{\bar h}g(X_n)}{\sqrt{1+\lfloor f(x_n)\rfloor +\lfloor g^2(X_n)\rfloor }}
\le \sqrt{\bar h}.
\end{split}
\end{equation}
\end{lemma}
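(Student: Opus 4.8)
The plan is to verify the three pathwise estimates directly from the definition \eqref{def:hnTrue} of $h_n$, using only the non-negativity of $f$ and $g$ from Assumption \ref{as:fg} together with the elementary fact, recorded immediately after \eqref{def:hnTrue}, that $\lfloor a\rfloor+1>a$ for every $a\in[0,\infty)$. Nothing probabilistic is involved: once $X_n(\omega)$ is fixed, $h_n$ is a deterministic function of it, so the bounds hold identically on every trajectory.

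First I would note that for any real $X_n$ the quantities $f(X_n)$ and $g^2(X_n)$ are finite and non-negative, so $\lfloor f(X_n)\rfloor$ and $\lfloor g^2(X_n)\rfloor$ are non-negative integers and the denominator $1+\lfloor f(X_n)\rfloor+\lfloor g^2(X_n)\rfloor$ is a positive integer, at least $1$. Hence $0<h_n\le\bar h$, which is the first line (reading $h$ as $\bar h$; equality occurs precisely when $f(X_n)<1$ and $g^2(X_n)<1$).

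For the second line the displayed equality is just \eqref{def:hnTrue} multiplied through by $f(X_n)$; for the inequality I would use $1+\lfloor f(X_n)\rfloor>f(X_n)$ together with $\lfloor g^2(X_n)\rfloor\ge0$ to obtain $1+\lfloor f(X_n)\rfloor+\lfloor g^2(X_n)\rfloor>f(X_n)$, so the fraction $f(X_n)/(1+\lfloor f(X_n)\rfloor+\lfloor g^2(X_n)\rfloor)$ does not exceed $1$ and therefore $h_nf(X_n)\le\bar h$. The third line is handled the same way: $1+\lfloor f(X_n)\rfloor+\lfloor g^2(X_n)\rfloor\ge 1+\lfloor g^2(X_n)\rfloor>g^2(X_n)$, whence $g^2(X_n)/(1+\lfloor f(X_n)\rfloor+\lfloor g^2(X_n)\rfloor)\le 1$, and taking square roots gives $\sqrt{h_n}\,g(X_n)\le\sqrt{\bar h}$.

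There is no real obstacle here; the only point worth flagging is that these are purely pathwise (deterministic) bounds that make no use of independence or of the $\mathcal F_t$-measurability of the increments $\Delta W_{n+1}$, which is exactly why they remain available on every sample path when they are invoked in the proofs of Theorems \ref{thm:ASstab}, \ref{thm:ASinstab} and \ref{thm:pos}.
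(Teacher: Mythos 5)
Your proof is correct and is exactly the direct verification the paper has in mind: the paper states this lemma without proof, treating the three bounds as immediate consequences of \eqref{def:hnTrue}, the non-negativity of $f$ and $g$, and the fact that $\lfloor a\rfloor+1>a$ for $a\ge 0$, which is precisely your argument. Your side remarks are also accurate: the "$h$" in the first line should be read as $\bar h$, the correct statement being $0<h_n\le\bar h$ (with equality possible when $f(X_n)<1$ and $g^2(X_n)<1$), and the bounds are indeed purely pathwise.
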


It will be convenient to define the following:
\begin{definition}
Let $\{X_n\}_{n\in\mathbb{N}}$ be a solution of \eqref{eq:Euler}. Denote
\begin{equation}
\label{def:Un}
U_{n}:=f(X_n)h_{n}+g(X_n)\Delta W_{n+1},
\end{equation}
so that
\begin{equation}
\label{eq:attrueEMalpha}
|X_{n+1}|^\alpha=|X_n|^\alpha\left(1+ U_{n}\right)^\alpha, \quad X_0=\varsigma\neq 0, \quad n=0, 1, 2, \dots.
\end{equation}
\end{definition}

\begin{definition}
We define the following auxiliary functions for use in the proofs of our main results
\begin{enumerate}
\item $\bar \phi\in C^3(\mathbb{R};\mathbb{R})$ is such that, for some $M>0$ 
\begin{equation}\label{eq:barphi}
\bar \phi(u)=|u|^\alpha, \, u\in (-\infty, -0.5)\cup (0.5, \, \infty); \quad |\bar \phi'''(u)|\le M,\quad u\in\mathbb{R}.
\end{equation}
\item $\bar \psi\in C^3(\mathbb{R};\mathbb{R})$ is such that, for some $M>0$ 
\begin{equation}\label{eq:barpsi}
\bar \psi(u)=|u|^{-\alpha}, \, u\in (-\infty, -0.5)\cup (0.5, \, \infty); \quad |\bar \psi'''(u)|\le M,\quad u\in\mathbb{R}.
\end{equation}
\end{enumerate}
\end{definition}
The auxiliary functions $\bar \phi$ and $\bar\psi$ have the following properties, which may be proved straightforwardly.
\begin{lemma}
Let $\bar \phi$ be as defined in \eqref{eq:barphi}. Then
\[
\bar \phi(1)=1, \quad \bar \phi'(1)=\alpha, \quad \bar \phi''(1)=\alpha(\alpha-1),
\]
and for some $H>0$ we have
\begin{equation}
\label{prop:Hphi}
\bigl|\bar \phi(u)-|u|^\alpha \bigr|\le H, \quad \text{for all}\quad u\in \mathbb R.
\end{equation}
\end{lemma}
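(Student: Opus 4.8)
The plan is to handle the three pointwise identities and the uniform bound separately; both are immediate consequences of the defining property \eqref{eq:barphi}, so the "proof" is really just an organised verification.

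First, since $u=1$ lies in the open set $(0.5,\infty)$ on which $\bar\phi(u)=|u|^\alpha=u^\alpha$, the function $\bar\phi$ agrees with the smooth map $u\mapsto u^\alpha$ on an entire neighbourhood of $1$. Hence its value and first two derivatives at $1$ can be read off from $u^\alpha$: $\bar\phi(1)=1^\alpha=1$, $\bar\phi'(1)=\alpha\,1^{\alpha-1}=\alpha$, and $\bar\phi''(1)=\alpha(\alpha-1)\,1^{\alpha-2}=\alpha(\alpha-1)$. No regularity beyond this local agreement is used.

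For \eqref{prop:Hphi} I would split $\mathbb R$ into $\{|u|>0.5\}$ and the compact interval $[-1/2,1/2]$. On the first region $\bar\phi(u)-|u|^\alpha\equiv 0$ by the definition of $\bar\phi$, so the difference vanishes there. On $[-1/2,1/2]$, the map $u\mapsto|u|^\alpha$ is continuous — it is continuous away from $0$ and, because $\alpha>0$, tends to $0$ as $u\to 0$ — and $\bar\phi$ is continuous since it is $C^3(\mathbb R;\mathbb R)$; therefore $u\mapsto\bigl|\bar\phi(u)-|u|^\alpha\bigr|$ is continuous on the compact set $[-1/2,1/2]$ and attains a finite maximum there by the extreme value theorem. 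Taking
\[
H:=\sup_{|u|\le 1/2}\bigl|\bar\phi(u)-|u|^\alpha\bigr|<\infty
\]
yields the bound on all of $\mathbb R$.

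There is no real obstacle here; the only point needing any attention is the behaviour of $|u|^\alpha$ at the origin, where one uses $\alpha\in(0,1)$ to conclude that $|u|^\alpha$ is genuinely continuous (indeed bounded by $(1/2)^\alpha$) on the relevant compact set. The existence of a function $\bar\phi$ with the properties in \eqref{eq:barphi} is taken as given, obtained in the standard way by smoothing $|u|^\alpha$ near $0$; since $|u|^\alpha$ is already bounded and continuous on $[-1/2,1/2]$, this smoothing is unproblematic.
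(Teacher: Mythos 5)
Your proposal is correct: the paper gives no proof of this lemma (it only remarks that the properties ``may be proved straightforwardly''), and your verification --- reading off $\bar\phi(1)$, $\bar\phi'(1)$, $\bar\phi''(1)$ from the local agreement of $\bar\phi$ with $u\mapsto u^\alpha$ near $u=1$, and obtaining \eqref{prop:Hphi} by noting the difference vanishes for $|u|>0.5$ and is continuous, hence bounded, on the compact set $[-1/2,1/2]$ --- is exactly the intended straightforward argument. The only cosmetic point is that if the supremum over $[-1/2,1/2]$ were zero you would replace it by any positive constant to get $H>0$, but this is immaterial (and in fact impossible here, since $|u|^\alpha$ is not $C^3$ at the origin for $\alpha\in(0,1)$).
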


\begin{lemma}
\label{lem:barpsi}
Let $\bar \psi$ be as defined in \eqref{eq:barpsi}. Then
\[
\bar \psi(1)=1, \quad \bar \psi'(1)=-\alpha, \quad \bar \psi''(1)=\alpha(\alpha+1),
\]
and there exists $M^\ast>0$ such that $|\bar \psi(u)|\leq M^\ast$ for all $u\in\mathbb{R}$. 
\end{lemma}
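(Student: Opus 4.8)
The plan is to exploit the fact that the point $u=1$ lies in the region $(0.5,\infty)$ on which $\bar\psi$ is, by construction, given by the explicit formula $\bar\psi(u)=|u|^{-\alpha}=u^{-\alpha}$.

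First I would establish the derivative values. Since $\bar\psi\equiv u^{-\alpha}$ on the open set $(0.5,\infty)\ni 1$, the identities $\bar\psi(1)=1$, $\bar\psi'(1)=-\alpha$, $\bar\psi''(1)=\alpha(\alpha+1)$ follow by direct differentiation of $u\mapsto u^{-\alpha}$ and evaluation at $u=1$; no use of the global $C^3$ extension is needed here, only that the extension coincides with $|u|^{-\alpha}$ on a neighbourhood of $1$.

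Second, for the uniform bound I would split $\mathbb{R}$ into $\{u:|u|>1/2\}$ and the compact interval $[-1/2,1/2]$. On $\{|u|>1/2\}$ we have $\bar\psi(u)=|u|^{-\alpha}$, and since $\alpha>0$ makes $t\mapsto t^{-\alpha}$ decreasing on $(0,\infty)$, $|\bar\psi(u)|=|u|^{-\alpha}<(1/2)^{-\alpha}=2^{\alpha}$. On $[-1/2,1/2]$, $\bar\psi$ is continuous (being $C^3$ on $\mathbb{R}$) on a compact set, hence bounded by $M_0:=\max_{|u|\le 1/2}|\bar\psi(u)|<\infty$. Taking $M^\ast:=\max\{2^{\alpha},M_0\}$ yields $|\bar\psi(u)|\le M^\ast$ for all $u\in\mathbb{R}$.

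There is essentially no obstacle here: the only mild points are that one must invoke the defining property \eqref{eq:barpsi}, which guarantees both that $\bar\psi$ agrees with $|u|^{-\alpha}$ away from $[-0.5,0.5]$ and that $\bar\psi$ is globally continuous so that the compactness argument applies, and that the monotonicity used for the tail bound requires $\alpha>0$, which holds since $\alpha\in(0,1)$. Note that the contrast with the preceding lemma on $\bar\phi$ is precisely that $|u|^{\alpha}$ is unbounded near infinity, which is why that lemma only asserts the weaker estimate \eqref{prop:Hphi} instead of global boundedness.
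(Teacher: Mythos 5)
Your argument is correct and is precisely the straightforward computation the paper has in mind: the paper states this lemma without proof, remarking only that it "may be proved straightforwardly," and your two steps (direct differentiation of $u^{-\alpha}$ near $u=1$, plus the split into the tail $\{|u|>1/2\}$ where $|\bar\psi(u)|=|u|^{-\alpha}\le 2^{\alpha}$ and the compact interval $[-1/2,1/2]$ where continuity of the $C^3$ extension gives a finite maximum) supply exactly that argument. Your closing observation about why the analogous lemma for $\bar\phi$ only yields the estimate \eqref{prop:Hphi} rather than global boundedness is also accurate.
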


Next, we show that the absolute expectations of the solutions of \eqref{eq:Euler} are bounded above. 
\begin{lemma}\label{lem:|xn|}
Let $\{X_n\}_{n\in\mathbb{N}}$ be a solution of \eqref{eq:Euler}. For each $n\in\mathbb{N}$ there exists a nonrandom $B_n\in (0, \infty)$ such that $$\mathbb E|X_{n+1}|\le B_n,\quad n\in\mathbb{N}.$$
\end{lemma}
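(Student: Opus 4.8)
The plan is to control $|X_{n+1}|$ pathwise by $|X_n|$ multiplied by a factor whose $\mathcal{F}_{t_n}$-conditional expectation is a nonrandom constant, and then to iterate.

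First I would apply the triangle inequality to \eqref{eq:Euler} to obtain, on every trajectory, $|X_{n+1}| \le |X_n|\bigl(1 + h_n f(X_n) + g(X_n)|\Delta W_{n+1}|\bigr)$. Since $X_n$, and therefore $f(X_n)$, $g(X_n)$ and the stepsize $h_n$, are $\mathcal{F}_{t_n}$-measurable, I would take the conditional expectation with respect to $\mathcal{F}_{t_n}$, pull out the measurable factors, and use Remark \ref{rem:condMoments} together with Jensen's inequality to estimate $\mathbb{E}\bigl[|\Delta W_{n+1}| \mid \mathcal{F}_{t_n}\bigr] \le \bigl(\mathbb{E}[|\Delta W_{n+1}|^2 \mid \mathcal{F}_{t_n}]\bigr)^{1/2} = \sqrt{h_n}$. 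Combining this with the pathwise bounds $h_n f(X_n) \le \bar h$ and $\sqrt{h_n}\,g(X_n) \le \sqrt{\bar h}$ from \eqref{prop: fgh} gives
\[
\mathbb{E}\bigl[|X_{n+1}| \,\big|\, \mathcal{F}_{t_n}\bigr] \le \bigl(1 + \bar h + \sqrt{\bar h}\bigr)|X_n|.
\]

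Writing $C := 1 + \bar h + \sqrt{\bar h}$, the tower property then yields $\mathbb{E}|X_{n+1}| \le C\,\mathbb{E}|X_n|$, and since $\mathbb{E}|X_0| = \varsigma < \infty$ a straightforward induction on $n$ gives $\mathbb{E}|X_{n+1}| \le C^{n+1}\varsigma =: B_n$, which is the assertion; the induction hypothesis $\mathbb{E}|X_n| < \infty$ is precisely what legitimises the conditional-expectation manipulations of the previous step.

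I do not expect a serious obstacle. The one point requiring care is that, unlike in the i.i.d. model \eqref{eq:gendiscr2}, the increment $\Delta W_{n+1}$ is not independent of $X_n$, so the expectation of the product cannot simply be factored; this is circumvented by working throughout with the $\mathcal{F}_{t_n}$-conditional law of $\Delta W_{n+1}$ recorded in Remark \ref{rem:condMoments}, which is available precisely because each $t_n$ is an $\mathcal{F}_t$-stopping time (Section \ref{sec:strong}). Note also that $B_n$ is permitted to grow with $n$ — here geometrically — since no bound uniform in $n$ is claimed or needed in what follows.
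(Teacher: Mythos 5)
Your argument is correct and follows essentially the same route as the paper: triangle inequality, the pathwise bounds $h_nf(X_n)\le\bar h$ and $\sqrt{h_n}\,g(X_n)\le\sqrt{\bar h}$ from \eqref{prop: fgh}, the conditional moment estimate $\mathbb{E}\bigl[|\Delta W_{n+1}|\,\big|\,\mathcal{F}_{t_n}\bigr]\le\sqrt{h_n}$ via Remark \ref{rem:condMoments}, the tower property, and induction from $\mathbb{E}|X_0|=\varsigma<\infty$. The only cosmetic difference is that the paper assumes $\bar h<1$ without loss of generality and writes $B_n:=3\,\mathbb{E}|X_n|$ recursively, whereas you keep the explicit constant $C=1+\bar h+\sqrt{\bar h}$ and give $B_n=C^{n+1}\varsigma$ in closed form.
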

\begin{proof}
We can assume without loss of generality that $h<1$.  Applying \eqref{prop: fgh} and the tower property of conditional expectation,
\begin{eqnarray}
\mathbb E|X_{n+1}|&\le &\mathbb E |X_n|+ \mathbb E|X_nf(X_n)h_{n}|+\mathbb E|X_ng(X_n)\Delta W_{n+1}|\nonumber\\
&\le & \mathbb E|X_n|+ h\mathbb E|X_n|+\mathbb E\left[\mathbb E \left[\left|X_ng(X_n)\Delta W_{n+1}\right|\bigr| \mathcal F_{t_{n-1}}\right]\right]\nonumber\\&\le&
2\mathbb E|X_n|+\mathbb E\left[| X_ng(X_n)|\mathbb E\left[|\Delta W_{n+1}|\bigr| \mathcal F_{t_{n-1}}\right]\right]\nonumber\\
&\le &2\mathbb E|X_n|+\mathbb E\left| X_ng(X_n)\sqrt{h_{n}}\right|\nonumber\\
&\le& 2\mathbb E|X_n|+\sqrt{\bar h}\mathbb E\left| X_n\right|\nonumber\\
& \le& 3\mathbb E|X_n|:=B_n,\label{def:Mn}
\end{eqnarray}
which is a finite deterministic constant for each $n\in \mathbb N$ by induction, since
\[
\mathbb E|X_0|=\mathbb E|\varsigma|<\infty.
\] 
\end{proof}

\subsection{Intermediate results for the proof of Theorem \ref{thm:ASstab}}
\begin{lemma}\label{lem:firstb}
Let $\{X_n\}_{n\in\mathbb{N}}$ be a solution of \eqref{eq:Euler} where \eqref{cond:stabbeta} holds. Suppose that $M$ is the bound in \eqref{eq:barphi}, and $\alpha \in (0, 1-\beta)$, where $\beta<1$ is the bound in \eqref{cond:stabbeta}. Then there exists $h_0$ such that for all $\bar h\le h_0$
\begin{equation}
\label{est:2}
\mathbb E\left[|1+U_{n}|^\alpha\bigl|\mathcal F_{t_{n-1}} \right]\le 1-2\alpha \lambda h_{n}g^2(X_n)\left(1+\bar h^2\frac {3M}{2\alpha}\right),\quad a.s.,
\end{equation}
where $\lambda:=-(1-\alpha-\beta)/2>0$.
\end{lemma}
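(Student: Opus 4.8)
The plan is to replace $|1+U_n|^\alpha$ by the smooth surrogate $\bar\phi$ of \eqref{eq:barphi}, Taylor-expand $\bar\phi$ about $1$ to third order, take conditional expectations using the conditional moments of $\Delta W_{n+1}$ recorded in Remark~\ref{rem:condMoments}, and then recombine the resulting drift and diffusion contributions with the aid of \eqref{cond:stabbeta} and the uniform bounds \eqref{prop: fgh}. (All conditional expectations below are a.s.\ finite, e.g.\ via $|1+U_n|^\alpha\le 1+|U_n|$ and \eqref{prop: fgh}.) First I would write
\[
|1+U_n|^\alpha=\bar\phi(1+U_n)+\bigl(|1+U_n|^\alpha-\bar\phi(1+U_n)\bigr).
\]
Since $\bar\phi(u)=|u|^\alpha$ for $|u|>1/2$, the remainder is supported on $A_n:=\{|1+U_n|\le 1/2\}$, where it is at most $H$ by \eqref{prop:Hphi}; hence, conditioning throughout on $\mathcal F_{t_{n-1}}$,
\[
\mathbb E\bigl[|1+U_n|^\alpha\,\big|\,\mathcal F_{t_{n-1}}\bigr]\le \mathbb E\bigl[\bar\phi(1+U_n)\,\big|\,\mathcal F_{t_{n-1}}\bigr]+H\,\mathbb P\bigl[A_n\,\big|\,\mathcal F_{t_{n-1}}\bigr].
\]

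Next I would control $\mathbb P[A_n\,|\,\mathcal F_{t_{n-1}}]$. On $A_n$ we have $1+U_n\le 1/2$, and since $f(X_n)h_n\ge 0$ this forces $g(X_n)\Delta W_{n+1}\le -1/2$; as $X_n\ne 0$ a.s.\ and then $g(X_n)\ne 0$, this gives $\Delta W_{n+1}\le -1/(2g(X_n))$. Applying Lemma~\ref{lem:3mom} to the $\mathcal F_{t_{n-1}}$-conditional law of $\Delta W_{n+1}$, whose third conditional absolute moment is at most $Kh_n^{3/2}$ by Remark~\ref{rem:condMoments}, yields $\mathbb P[A_n\,|\,\mathcal F_{t_{n-1}}]\le 8Kh_n^{3/2}g^3(X_n)$. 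Writing $h_n^{3/2}g^3(X_n)=\bigl(h_ng^2(X_n)\bigr)\bigl(\sqrt{h_n}\,g(X_n)\bigr)$ and invoking $\sqrt{h_n}\,g(X_n)\le\sqrt{\bar h}$ from \eqref{prop: fgh}, the tail term is bounded by $8HK\sqrt{\bar h}\,h_ng^2(X_n)$ — that is, by an $O(\sqrt{\bar h})$ multiple of the main term obtained below.

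For the smooth part, Taylor's theorem with $\bar\phi(1)=1$, $\bar\phi'(1)=\alpha$, $\bar\phi''(1)=\alpha(\alpha-1)$ and $|\bar\phi'''|\le M$ gives $\bar\phi(1+U_n)\le 1+\alpha U_n+\tfrac{\alpha(\alpha-1)}{2}U_n^2+\tfrac{M}{6}|U_n|^3$. Taking conditional expectations and inserting $\mathbb E[\Delta W_{n+1}\,|\,\mathcal F_{t_{n-1}}]=0$, $\mathbb E[(\Delta W_{n+1})^2\,|\,\mathcal F_{t_{n-1}}]=h_n$, $\mathbb E[|\Delta W_{n+1}|^3\,|\,\mathcal F_{t_{n-1}}]\le Kh_n^{3/2}$, I would obtain $\mathbb E[U_n\,|\,\mathcal F_{t_{n-1}}]=f(X_n)h_n$, $\mathbb E[U_n^2\,|\,\mathcal F_{t_{n-1}}]=f^2(X_n)h_n^2+g^2(X_n)h_n$, and $\mathbb E[|U_n|^3\,|\,\mathcal F_{t_{n-1}}]\le c\bigl(f^3(X_n)h_n^3+g^3(X_n)h_n^{3/2}\bigr)$ for an absolute constant $c$. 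Since $\alpha\in(0,1)$ the coefficient $\tfrac{\alpha(\alpha-1)}{2}$ is negative, so the $f^2(X_n)h_n^2$ term may be discarded, leaving
\[
\mathbb E\bigl[\bar\phi(1+U_n)\,\big|\,\mathcal F_{t_{n-1}}\bigr]\le 1+\alpha f(X_n)h_n-\tfrac{\alpha(1-\alpha)}{2}g^2(X_n)h_n+\tfrac{Mc}{6}\bigl(f^3(X_n)h_n^3+g^3(X_n)h_n^{3/2}\bigr).
\]

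Finally I would use \eqref{cond:stabbeta} in the form $f(u)\le\tfrac{\beta}{2}g^2(u)$ to estimate $\alpha f(X_n)h_n\le\tfrac{\alpha\beta}{2}g^2(X_n)h_n$; combined with $-\tfrac{\alpha(1-\alpha)}{2}g^2(X_n)h_n$ this produces the leading term $-\tfrac{\alpha}{2}(1-\alpha-\beta)g^2(X_n)h_n$, which is strictly negative precisely because $\alpha<1-\beta$ — this is the quantity recorded through $\lambda$ in \eqref{est:2}. Every remaining term — the cubic remainder, the tail term from the second step, and the higher-order drift pieces — is, after repeated use of $f(X_n)h_n\le\bar h$, $\sqrt{h_n}\,g(X_n)\le\sqrt{\bar h}$ and $f\le\tfrac{\beta}{2}g^2$, bounded by a constant multiple of $(\bar h^{1/2}+\bar h^2)\,g^2(X_n)h_n$. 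Collecting these estimates yields $\mathbb E[|1+U_n|^\alpha\,|\,\mathcal F_{t_{n-1}}]\le 1-\bigl(c_1-c_2(\bar h^{1/2}+\bar h^2)\bigr)g^2(X_n)h_n$ for constants $c_1,c_2>0$ depending only on $M,K,H,\alpha,\beta$, and choosing $h_0$ small enough that for all $\bar h\le h_0$ the bracket dominates the coefficient demanded on the right-hand side of \eqref{est:2} delivers the claim. The main obstacle is the second step: the non-smoothness of $u\mapsto|1+u|^\alpha$ at the origin is what blocks a direct appeal to a discrete It\^o expansion, and only the conditional third-moment bound of Remark~\ref{rem:condMoments} — which produces exactly an $h_n^{3/2}$, hence merely an $O(\sqrt{\bar h})$ relative perturbation after \eqref{prop: fgh} — is strong enough to absorb it; this is precisely the feature of the adaptive scaling \eqref{def:hnTrue} that makes the argument work.
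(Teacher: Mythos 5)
Your proposal is correct and follows essentially the same route as the paper: replace $|1+U_n|^\alpha$ by the smooth surrogate $\bar\phi$, Taylor-expand to third order and insert the conditional moments of Remark~\ref{rem:condMoments}, control the non-smooth correction on the set $\{|1+U_n|\le 1/2\}$ through the conditional third-moment bound via Lemma~\ref{lem:3mom}, and finish with \eqref{cond:stabbeta}, the bounds \eqref{prop: fgh}, and a smallness condition on $\bar h$. Your one-sided tail estimate $\mathbb P[|1+U_n|\le 1/2\,|\,\mathcal F_{t_{n-1}}]\le 8K h_n^{3/2}g^3(X_n)$ is a slightly cleaner version of the paper's two-endpoint interval computation, and, just like the paper's own calculation, it actually produces a decay coefficient of order $\tfrac12\alpha\lambda\,h_n g^2(X_n)$ rather than the literal $2\alpha\lambda$ of \eqref{est:2} (whose stated constant, like the sign in ``$\lambda:=-(1-\alpha-\beta)/2>0$'', is evidently a typo, the form actually used in the proof of Theorem~\ref{thm:ASstab} being the one both you and the paper derive), so this discrepancy is not a gap in your argument.
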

\begin{proof}
Apply a Taylor expansion to $\bar \phi$ around $1$, using the conditional moment estimates in Remark \ref{rem:condMoments} and the fact that each $X_n$ and  $h_n$ are $\mathcal F_{t_{n-1}}$ measurable, there exists $\theta\in(0,U_n)$ such that
\begin{multline*}
\mathbb E\left[\bar \phi(1+U_{n})\bigl|\mathcal F_{t_{n-1}} \right]=1+\alpha f(X_n)h_{n}\\
+\frac {\alpha(\alpha-1)}2\mathbb E\left[f^2(X_n)h^2_{n}+2g(X_n)f(X_n)h_{n}\Delta W_{n}+g^2(X_n)[\Delta W_{n+1}]^2\bigl|\mathcal F_{t_{n-1}} \right]\\
+\mathbb E\left[ \frac{\bar \phi'''(\theta)U_{n}^3}{6}\biggl|\mathcal F_{t_{n-1}} \right],\quad a.s.
\end{multline*}
Again by the conditional moment estimates in Remark \ref{rem:condMoments} we have
\begin{eqnarray*}
\mathbb E\left[ \frac{\bar \phi'''(\theta)U_{n}^3}{6}\biggl|\mathcal F_{t_{n-1}} \right]&\le& \frac M6 \mathbb E\left[ |U_{n}^3|\bigl|\mathcal F_{t_{n-1}} \right]\\
&\le& \frac M2 \left|f^3(X_n)h^3_{n}+g^3(X_n)\mathbb E\left[\left[\Delta W_{n+1}\right]^3\bigl | \mathcal F_{t_{n-1}} \right]   \right|\\
&\le& \frac M2 \left[f^3(X_n)h^3_{n}+g^3(X_n) K_1 h^{3/2}_n \right],\quad a.s.
\end{eqnarray*}

Applying \eqref{prop: fgh} and the conditional moment estimates in Remark \ref{rem:condMoments}, noting that
\[
\frac {\alpha(\alpha-1)}2 f^2(X_n)h^2_{n}\le 0, 
\]
and 
\[
\mathbb E\left[ g^2(x_n)(\Delta W_{n+1})^2\bigl|\mathcal F_{t_{n-1}} \right]=g^2(x_n) \mathbb E\left[(\Delta W_{n+1})^2\bigl|\mathcal F_{t_{n-1}} \right]=h_n g^2(x_n),\quad a.s.,
\]
we arrive at  
  \begin{eqnarray}
\mathbb E\left[\bar \phi(1+U_{n})\biggl|\mathcal F_{t_{n-1}} \right]
&\le& 1+\alpha h_{n}\left[f(X_n)-\frac{1-\alpha}2 g^2(X_n)\right]\nonumber\\
&&+\frac {3M}2 \left[f^3(X_n)h^3_{n}+g^3(X_n) K_1 h^{3/2}_n \right]\nonumber
\\&\le& 1+\alpha h_{n}\left[f(X_n)-\frac{1-\alpha}2 g^2(X_n)\right]\nonumber\\
&&+\frac {3M}2 \bar h^2f(X_n)h_{n}+\frac {3MK_1}2 g^2(X_n) h_{n}\sqrt{\bar h}\nonumber\\&
=&1+\alpha h_{n}\left[f(X_n)\left(1+\bar h^2\frac {3M}{2\alpha}  \right)\right.\nonumber\\&&\left.-\left(\frac{1-\alpha}2-\sqrt{\bar h}\frac {3MK_1}{2\alpha}\right) g^2(X_n)\right]\nonumber\\&
=&1+\frac 12\alpha h_{n} g^2(X_n)\left[\frac{2f(X_n)}{g^2(X_n)}\left(1+\bar h^2\frac {3M}{2\alpha}  \right)\right.\nonumber\\
&&\left.-\left((1-\alpha)-\sqrt{\bar h}\frac {3MK_1}{\alpha}\right) \right],\quad a.s.\label{est:Ebarphi}
\end{eqnarray}

Now we estimate
\begin{eqnarray*}
\lefteqn{\mathbb E\left[\left|\bar \phi(1+U_{n})-|1+U_{n}|^\alpha\right|\bigl|\mathcal F_{t_{n-1}} \right]} \\&=&
\mathbb E\left[\left|\bar \phi(1+f(X_n)h_{n}+g(X_n)\Delta W_{n+1})\right.\right.\\
&&\left.\left.\qquad\qquad-|1+f(X_n)h_{n}+g(X_n)\Delta W_{n+1}|^\alpha\right|\bigl|\mathcal F_{t_{n-1}} \right] .
\end{eqnarray*}
We have,
\begin{equation}
\label{est:1+U}
|1+U_{n}|\le \frac 12 \iff  -\frac 12 \le 1+f(X_n)h_{n}+g(X_n)\Delta W_{n+1}\le  \frac 12,
\end{equation}
which is equivalent to 
 \begin{equation}
\label{est:DW} 
-\frac{ \frac 12+1+f(X_n)h_{n}}{g(X_n)}\le \Delta W_{n+1}\le  
 \frac{ \frac 12-1-f(X_n)h_{n}}{g(X_n)} .
 \end{equation}
Denote by $\Psi_n$ the  distribution function of $\Delta W_{n}$ conditioned upon $\mathcal F_{t_{n-1}}$.  Then, by \eqref {prop:Hphi}, we have 
\begin{eqnarray*}
\lefteqn{\mathbb E\left[\left|\bar \phi(1+U_{n})-|1+U_{n}|^\alpha\right|\bigl|\mathcal F_{t_{n-1}} \right]}\nonumber\\&=&\int\limits_{|1+U_{n}|\le \frac 12}
\left|\bar \phi(1+U_{n})-|1+U_{n}|^\alpha\right|d\Psi_{n}\le H\int\limits^{-(\frac 12+f(x_n)h_{n})/g(x_n)}_{-( \frac 32+f(x_n)h_{n})/g(x_n)}d\Psi_{n}\nonumber\\&=&H\Psi_{n}\left( \frac{ -\frac 12-f(x_n)h_{n}}{g(x_n)} \right)-H\Psi_{n}\left( -\frac{ \frac 32+f(x_n)h_{n}}{g(x_n)} \right),\quad a.s.
\end{eqnarray*}
Due to \eqref{est:DeltaWn} and the conditional moment estimates in Remark \ref{rem:condMoments}, and Lemma \ref{lem:3mom}, we have
\[
\left[\frac{ 3/2+f(X_n)h_{n}}{g(X_n)}   \right]^3\Psi_{n}\left( -\frac{ 3/2+f(X_n)h_{n}}{g(X_n)} \right)\le Kh^{3/2}_{n},
\]
so that
\[
\Psi_{n}\left( -\frac{ 3/2+f(X_n)h_{n}}{g(X_n)} \right)\le K\frac{h^{3/2}_{n}g^3(X_n)}{ \left[3/2+f(X_n)h_{n}\right]^3}\le K_2 h^{3/2}_{n}g^3(X_n).
\]
Similarly,
\[
\Psi_{n}\left( -\frac{ 1/2+f(X_n)h_{n}}{g(X_n)} \right)\le K_1\frac{h^{3/2}_{n}g^3(X_n)}{ \left[1/2+f(X_n)h_{n}\right]^3}\le K_3 h^{3/2}_{n}g^3(X_n).
\]
Again applying \eqref{prop: fgh}, we arrive at
\begin{multline}
\label{est:Delta2}
\mathbb E\left[\left|\bar \phi(1+U_{n})-|1+U_{n}|^\alpha\right|\bigl|\mathcal F_{t_{n-1}} \right]\\ \le K_4 h^{3/2}_{n+1}g^3(x_n)\le K_4 \bar h^{1/2} h_ng^2(x_n)\quad a.s.
\end{multline}
Combining \eqref {est:Ebarphi} with  \eqref{est:Delta2} we arrive at 
\begin{eqnarray}
\lefteqn{\mathbb E\left[|1+U_{n}|^\alpha\bigl|\mathcal F_{t_{n-1}} \right]}\nonumber\\&=&
1+\frac 12\alpha h_{n} g^2(X_n)\nonumber\\
&&\times\left[\frac{2f(X_n)}{g^2(X_n)}\left(1+\bar h^2\frac {3M}{2\alpha}  \right)-\left((1-\alpha)-\sqrt{\bar h}\frac {3MK_1+2K_4}{\alpha}\right) \right]\nonumber\\
&=&1+\frac1 2\alpha h_{n} g^2(X_n)\left(1+\bar h^2\frac {3M}{2\alpha}  \right)\nonumber\\
&&\times\left[\frac{2f(X_n)}{g^2(X_n)}-\frac{\left((1-\alpha)-\sqrt{\bar h}\frac {3MK_1+2K_4}{\alpha}\right)}{\left(1+\bar h^2\frac {3M}{2\alpha}  \right)} \right],\quad a.s.\label{est:1}
\end{eqnarray}

Let $\beta$ be defined as in \eqref{cond:stabbeta} and recall that $0<\alpha< 1-\beta$. We can choose $h_0>0$ such that for all $\bar h\le h_0$ 
\begin{equation*}
\label{cond:fg3}
\frac {1-\alpha-\sqrt{\bar h}\frac {3MK_1+2K_4}{\alpha}}{1+\bar h^2\frac {3M}{2\alpha}}>1-\alpha-\frac{1-\alpha-\beta}2=\frac{1-\alpha+\beta}2>0.
\end{equation*}
Then, for each $n\in \mathbb N$,
\[
\frac{2f(X_n)}{g^2(X_n)}-\frac{\left((1-\alpha)-\sqrt{\bar h}\frac {3MK_1+2K_4}{\alpha}\right)}{\left(1+\bar h^2\frac {3M}{2\alpha}  \right)}<\beta- \frac{1-\alpha+\beta}2=-\frac{1-\alpha-\beta}2,
\]
and \eqref{est:1} can be written as \eqref{est:2}, which completes the proof.
\end{proof}

\begin{lemma}
Let $\{X_n\}_{n\in\mathbb{N}}$ be a solution of \eqref{eq:Euler} where \eqref{cond:stabbeta} holds. Suppose $\alpha \in (0, 1-\beta)$, where $\beta<1$ is the bound in \eqref{cond:stabbeta}. Then  $\{\mu_n\}_{n\in\mathbb{N}}$ is an $\mathcal{F}_{t_n}$-martingale-difference, where
\begin{equation}\label{def:martdif}
\mu_n:=|X_n(1+U_{n})|^\alpha-\mathbb E\left[\left|X_n(1+U_{n})\right|^\alpha \bigl|\mathcal F_{t_{n-1}} \right] 
\end{equation}
and $\{\mathcal M_n\}_{n\in \mathbb N}$  is an $\mathcal F_{t_{n-1}}$-martingale, where
\begin{equation}
\label{def:mart}
\mathcal M_n:=\sum_{i=1}^{n+1}\mu_i, \quad n\in \mathbb N.
\end{equation}
\end{lemma}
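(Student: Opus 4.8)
The plan is to verify directly the two defining properties of an $\mathcal{F}_{t_n}$-martingale-difference for $\{\mu_n\}$, and then to read off the martingale property of $\{\mathcal{M}_n\}$ by a short formal computation. The only ingredient that draws on earlier work is an $L^1$-bound on $|X_n(1+U_n)|^\alpha$, which is already available from Lemma~\ref{lem:|xn|}; everything else is the definition of conditional expectation together with the tower property, so I do not anticipate a genuine obstacle.

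First I would establish integrability. Since $X_n(1+U_n)=X_{n+1}$ and $\alpha\in(0,1)$, the elementary bound $|x|^\alpha\le 1+|x|$ (or Jensen's inequality applied to the concave map $y\mapsto y^\alpha$) combined with Lemma~\ref{lem:|xn|} gives $\mathbb{E}|X_n(1+U_n)|^\alpha=\mathbb{E}|X_{n+1}|^\alpha\le 1+\mathbb{E}|X_{n+1}|\le 1+B_n<\infty$ for every $n\in\mathbb{N}$. Consequently the conditional expectation appearing in \eqref{def:martdif} is well defined and integrable, and $\mathbb{E}|\mu_n|\le \mathbb{E}|X_n(1+U_n)|^\alpha+\mathbb{E}\big[\mathbb{E}[|X_n(1+U_n)|^\alpha\mid\mathcal{F}_{t_{n-1}}]\big]=2\,\mathbb{E}|X_n(1+U_n)|^\alpha<\infty$.

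It remains to check the centering and adaptedness and to pass to $\mathcal{M}_n$. Taking $\mathbb{E}[\,\cdot\mid\mathcal{F}_{t_{n-1}}]$ through \eqref{def:martdif} and using that the subtracted term is already $\mathcal{F}_{t_{n-1}}$-measurable, the pull-out property yields $\mathbb{E}[\mu_n\mid\mathcal{F}_{t_{n-1}}]=0$ a.s.; the adaptedness of $\mu_n$ to $\{\mathcal{F}_{t_n}\}$ follows because $X_n$, $h_n$ and $\Delta W_{n+1}$ are each measurable with respect to the relevant member of the filtration (cf. Remark~\ref{rem:condMoments}), so $|X_n(1+U_n)|^\alpha$, and hence $\mu_n$, is adapted. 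This establishes the first claim. For the second, $\mathcal{M}_n=\sum_{i=1}^{n+1}\mu_i$ is a finite sum of integrable, adapted random variables, hence integrable and adapted, with $\mathcal{M}_n-\mathcal{M}_{n-1}=\mu_{n+1}$; applying $\mathbb{E}[\,\cdot\mid\mathcal{F}_{t_{n-1}}]$ and invoking the martingale-difference property just proved, together with the tower property to lower the conditioning level if needed, gives $\mathbb{E}[\mathcal{M}_n\mid\mathcal{F}_{t_{n-1}}]=\mathcal{M}_{n-1}$ a.s. Thus $\{\mathcal{M}_n\}_{n\in\mathbb{N}}$ is a martingale with respect to the stated filtration, and the one substantive point — the legitimacy of all the conditional expectations above — rests solely on the per-$n$ bound $\mathbb{E}|X_{n+1}|\le B_n$ from Lemma~\ref{lem:|xn|}.
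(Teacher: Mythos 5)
Your proposal is correct and follows essentially the same route as the paper: integrability of $\mu_n$ via the $L^1$-bound $\mathbb E|X_{n+1}|\le B_n$ from Lemma~\ref{lem:|xn|} together with a concavity bound (the paper uses Jensen's inequality, you use $|x|^\alpha\le 1+|x|$, which is an immaterial difference), and the zero conditional mean from the $\mathcal F_{t_{n-1}}$-measurability of the subtracted conditional expectation, after which the martingale property of $\mathcal M_n$ is immediate.
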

\begin{proof}
Since $\left|X_n(1+U_{n})\right|^\alpha>0$, and by Jensen's inequality for concave functions, we have 
\begin{eqnarray*}
\mathbb E|\mu_n|&\le& \mathbb E\left|X_n(1+U_{n})|^\alpha-\mathbb E\left[\left|X_n(1+U_{n})|^\alpha\right|\bigl|\mathcal F_{t_{n-1}} \right]\right|\\&\le &\mathbb E|X_n(1+U_{n})|^\alpha
+\mathbb E\left[\mathbb E\left[\left|X_n(1+U_{n})\right|^\alpha\bigl|\mathcal F_{t_{n-1}} \right]  \right]\\&\le &
\left(\mathbb E|X_n(1+U_{n})|\right)^\alpha +\mathbb E|X_n(1+U_{n})|^\alpha\\
&\le& 2\left(\mathbb E|X_n(1+U_{n})|\right)^\alpha\\
& \le& 2B^\alpha_{n+1},\quad a.s.,
\end{eqnarray*}
where $B_n<\infty$ is as defined in \eqref{def:Mn}. Also, since $\mathbb E\left[\left|X_n(1+U_{n})\right|^\alpha\bigl|\mathcal F_{t_{n-1}} \right]$ is $\mathcal F_{t_{n-1}}$-measurable, 
\begin{eqnarray*}
\mathbb E\left[\mu_{n+1}\bigl|\mathcal F_{t_{n-1}} \right]&=&\mathbb E\left[\left|X_n(1+U_{n})\right|^\alpha\bigl|\mathcal F_{t_{n-1}} \right]\\
&&-\mathbb E\left[\mathbb E\left[\left|X_n(1+U_{n})\right|^\alpha\bigl|\mathcal F_{t_{n-1}} \right]\bigl|\mathcal F_{t_{n-1}} \right]\\&=&\mathbb E\left[\left|X_n(1+U_{n})\right|^\alpha\bigl|\mathcal F_{t_{n-1}} \right]-\mathbb E\left[\left|X_n(1+U_{n})\right|^\alpha\bigl|\mathcal F_{t_{n-1}} \right]\\
&=&0,\quad a.s.,
\end{eqnarray*}
from which the statement of the lemma follows.
\end{proof}

\subsection{Intermediate results for the proof of Theorem \ref{thm:ASinstab}}
\begin{lemma}\label{lem:Ualbound}
Let $\{X_n\}_{n\in\mathbb{N}}$ be a solution of \eqref{eq:Euler} where \eqref{cond:instabgamma} holds, and suppose that $\alpha\in(0,\gamma-1)$, where $\gamma>1$ is the bound in \eqref{cond:instabgamma}. Then, for some constants $K_1>0$ and $K_7>0$, a.s.,
\begin{multline}
\label{est:instab1}
\mathbb E\left[|1+U_{n}|^{-\alpha}\bigl|\mathcal F_{t_{n-1}} \right]\\\le 1-\frac 12\alpha h_{n} g^2(X_n)\left[\frac{2f(X_n)}{g^2(X_n)}-\frac{\left((1+\alpha)+\sqrt{\bar h}\frac {3MK_1+2K_7}{\alpha}\right)}{\left(1-\bar h\frac {\alpha+1}2-\bar h^2\frac {3M}{2\alpha}  \right)} \right].
\end{multline}
\end{lemma}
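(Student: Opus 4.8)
The plan is to mirror the proof of Lemma~\ref{lem:firstb} with the auxiliary function $\bar\psi$ in place of $\bar\phi$. We may assume $\bar h<1$ and, since $|u|^{-\alpha}$ must be locally integrable, $\alpha\in(0,1)$; note also that starting from $\varsigma>0$ one has $X_n\neq0$, hence $g(X_n)>0$, for every $n$ off a $\mathbb P$-null set, since $X_{n+1}=0$ would force $U_n=-1$, an event of zero conditional probability given $\mathcal F_{t_{n-1}}$ because $\Delta W_{n+1}$ is conditionally Gaussian. Taylor's theorem applied to $\bar\psi$ about $1$ (with Lemma~\ref{lem:barpsi} supplying $\bar\psi(1)=1$, $\bar\psi'(1)=-\alpha$, $\bar\psi''(1)=\alpha(\alpha+1)$ and \eqref{eq:barpsi} the bound $|\bar\psi'''|\le M$), followed by $\mathbb E[\,\cdot\,|\mathcal F_{t_{n-1}}]$ using the conditional moments of Remark~\ref{rem:condMoments} (so the cross term vanishes and $\mathbb E[U_n^2|\mathcal F_{t_{n-1}}]=f^2(X_n)h_n^2+g^2(X_n)h_n$) together with the elementary inequality $|U_n|^3\le4(f^3(X_n)h_n^3+g^3(X_n)|\Delta W_{n+1}|^3)$, yields
\[
\mathbb E\!\left[\bar\psi(1+U_n)\bigl|\mathcal F_{t_{n-1}}\right]\le1-\alpha f(X_n)h_n+\tfrac{\alpha(\alpha+1)}{2}\!\left(f^2(X_n)h_n^2+g^2(X_n)h_n\right)+\tfrac{3M}{2}\!\left(f^3(X_n)h_n^3+K_1g^3(X_n)h_n^{3/2}\right)
\]
for a suitable $K_1>0$.

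There are two departures from the stability argument. First, $\bar\psi''(1)=\alpha(\alpha+1)>0$, so the term $\tfrac{\alpha(\alpha+1)}{2}f^2(X_n)h_n^2$ is now positive and, unlike in Lemma~\ref{lem:firstb}, cannot be discarded; I would instead bound it by $\tfrac{\alpha(\alpha+1)}{2}\bar hf(X_n)h_n$ using $f(X_n)h_n\le\bar h$ from \eqref{prop: fgh}, and likewise $f^3(X_n)h_n^3\le\bar h^2f(X_n)h_n$ and $g^3(X_n)h_n^{3/2}\le\sqrt{\bar h}\,g^2(X_n)h_n$. Collecting the $f(X_n)h_n$-terms then produces $-\alpha f(X_n)h_n\bigl(1-\tfrac{(\alpha+1)\bar h}{2}-\tfrac{3M\bar h^2}{2\alpha}\bigr)$, the origin of the denominator in \eqref{est:instab1}.

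The main obstacle is the second departure: estimating $\mathbb E[\,|\bar\psi(1+U_n)-|1+U_n|^{-\alpha}|\,|\,\mathcal F_{t_{n-1}}]$, for which there is no analogue of the uniform bound $|\bar\phi(u)-|u|^\alpha|\le H$ used in Lemma~\ref{lem:firstb}, because $|u|^{-\alpha}$ blows up at $0$. Since $\bar\psi$ and $|\cdot|^{-\alpha}$ agree off $[-\tfrac12,\tfrac12]$, the integrand is supported on $\{|1+U_n|\le\tfrac12\}$, which by the computation of \eqref{est:1+U}--\eqref{est:DW} means $\Delta W_{n+1}$ lies in a bounded interval of negative reals with $|\Delta W_{n+1}|\ge1/(2g(X_n))$. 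On that set I would use $|\bar\psi|\le M^\ast$ (Lemma~\ref{lem:barpsi}) to bound the integrand by $M^\ast+|1+U_n|^{-\alpha}$. The $M^\ast$-part contributes $M^\ast\,\mathbb P[|1+U_n|\le\tfrac12\,|\,\mathcal F_{t_{n-1}}]$, which is at most a constant times $h_n^{3/2}g^3(X_n)$ exactly as in Lemma~\ref{lem:firstb}, via Lemma~\ref{lem:3mom} and the conditional third-moment bound of Remark~\ref{rem:condMoments}. For the $|1+U_n|^{-\alpha}$-part I would integrate against the conditional Gaussian density $(2\pi h_n)^{-1/2}e^{-s^2/(2h_n)}$: on the relevant interval $e^{-s^2/(2h_n)}\le e^{-1/(8g^2(X_n)h_n)}$, the substitution $u=1+f(X_n)h_n+g(X_n)s$ (Jacobian $1/g(X_n)$) reduces the remaining integral to the finite constant $\int_{-1/2}^{1/2}|u|^{-\alpha}\,du$, and $e^{-x}\le2x^{-2}$ turns the exponential factor into a constant times $g^4(X_n)h_n^2$; this part is again at most a constant times $g^3(X_n)h_n^{3/2}$. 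Hence for some $K_7>0$,
\[
\mathbb E\!\left[\left|\bar\psi(1+U_n)-|1+U_n|^{-\alpha}\right|\bigl|\mathcal F_{t_{n-1}}\right]\le K_7h_n^{3/2}g^3(X_n)\le K_7\sqrt{\bar h}\,g^2(X_n)h_n
\]
using $\sqrt{h_n}\,g(X_n)\le\sqrt{\bar h}$ from \eqref{prop: fgh}.

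Finally I would combine the two displays through $\bigl|\mathbb E[\bar\psi(1+U_n)|\mathcal F_{t_{n-1}}]-\mathbb E[|1+U_n|^{-\alpha}|\mathcal F_{t_{n-1}}]\bigr|\le\mathbb E[|\bar\psi(1+U_n)-|1+U_n|^{-\alpha}|\,|\,\mathcal F_{t_{n-1}}]$, group the quadratic, cubic and correction contributions to $g^2(X_n)h_n$ into $\tfrac{\alpha}{2}g^2(X_n)h_n\bigl((1+\alpha)+\sqrt{\bar h}\,\tfrac{3MK_1+2K_7}{\alpha}\bigr)$, and choose $h_0$ so small that $1-\tfrac{(\alpha+1)\bar h}{2}-\tfrac{3M\bar h^2}{2\alpha}\in(0,1]$ for all $\bar h\le h_0$; a short rearrangement then yields \eqref{est:instab1}. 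Note that \eqref{cond:instabgamma} and the restriction $\alpha<\gamma-1$ are not used in this estimate itself — they are needed only afterwards, to ensure the bracketed quantity in \eqref{est:instab1} becomes positive as $X_n\to0$, which is how the estimate is exploited in the proof of Theorem~\ref{thm:ASinstab}.
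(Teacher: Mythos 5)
Your argument is correct and follows the paper's proof essentially step for step: the same Taylor expansion of $\bar\psi$ about $1$ with the conditional moment estimates of Remark \ref{rem:condMoments}, the same handling of the now-positive quadratic term via $f(X_n)h_n\le\bar h$ (the source of the denominator), and the same two-part bound for the correction term — the $M^\ast$ piece via Lemma \ref{lem:3mom} and the $|1+U_n|^{-\alpha}$ piece via the Gaussian substitution and $e^{-x}\le 2x^{-2}$ — followed by the same final regrouping into \eqref{est:instab1}. If anything you are slightly more careful than the paper, making explicit the Jacobian $1/g(X_n)$ in the substitution and the implicit requirement $\alpha<1$ needed for the local integrability of $|u|^{-\alpha}$.
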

\begin{proof}
Let $U_n$ be  defined by \eqref{def:Un}. Proceeding as in the proof of Lemma \ref{lem:firstb}, we have
\begin{multline*}
\mathbb E\left[\bar \psi(1+U_{n})\bigl|\mathcal F_{t_{n-1}} \right]=1-\alpha f(X_n)h_{n}\\+
\frac {\alpha(\alpha+1)}2\mathbb E\left[f^2(X_n)h^2_{n}+2g(X_n)f(X_n)h_{n}\Delta W_{n}+g^2(X_n)[\Delta W_{n+1}]^2\bigl|\mathcal F_{t_{n-1}} \right]\\ +\mathbb E\left[ \frac{\bar \phi'''(\theta)U_{n}^3}{6}\biggl|\mathcal F_{t_{n-1}} \right],\quad a.s.,
\end{multline*}
leading to
  \begin{eqnarray}
\lefteqn{\mathbb E\left[\bar \psi(1+U_{n})\bigl|\mathcal F_{t_{n-1}} \right]}\nonumber\\
&\le& 1-\alpha h_{n}\left[f(X_n)\left[1- \frac{\alpha+1}2 f(X_n) h_n \right]-\frac{1+\alpha}2 g^2(X_n)\right]\nonumber\\&&+\frac {3M}2 \left[f^3(X_n)h^3_{n}+g^3(X_n) K_1 h^{3/2}_n \right]\nonumber
\\&\le& 1-\alpha h_{n}\left[f(X_n)\left(1- \frac{\alpha+1}2 f(X_n) h_n \right)-\frac{1+\alpha}2 g^2(X_n)\right]\nonumber\\&&+\frac {3M}2 \bar h^2f(X_n) h_{n}+\frac {3MK_1}2 g^2(X_n) h_{n}\sqrt{\bar h}\nonumber\\
&=&1-\alpha \bar h_{n}\left[f(X_n)\left(1- \frac{\alpha+1}2 \bar h-\bar h^2\frac {3M}{2\alpha}  \right)\right.\nonumber\\
&&\left.\qquad\qquad-\left(\frac{1+\alpha}2+\sqrt{\bar h}\frac {3MK_1}{2\alpha}\right) g^2(X_n)\right]\nonumber\\
&=&1-\frac 12\alpha h_{n} g^2(X_n)\left[\frac{2f(x_n)}{g^2(X_n)}\left(1- \frac{\alpha+1}2 \bar h-\bar h^2\frac {3M}{2\alpha}  \right)\right.\nonumber\\
&&\left.\qquad\qquad-\left((1+\alpha)+\sqrt{\bar h}\frac {3MK_1}{\alpha}\right) \right],\quad a.s.\label{est:Ebarpsi}
\end{eqnarray}

Next we estimate $\mathbb E\left[\left|\bar \psi(1+U_{n})-|1+U_{n}|^{-\alpha}\right|\bigl|\mathcal F_{t_{n-1}} \right]$.
As before, denote by $\Psi_n$ the  distribution function of $\Delta W_{n}$ conditioned upon $\mathcal F_{t_{n-1}}$.  Then
\begin{eqnarray*}
\lefteqn{\mathbb E\left[\left|\bar \psi(1+U_{n})-|1+U_{n}|^{-\alpha}\right|\bigl|\mathcal F_{t_{n-1}} \right]}\\
&=&\int\limits_{|1+U_{n}|\le \frac 12}\left|\bar \psi(1+U_{n})-|1+U_{n}|^{-\alpha}\right|d\Psi_{n}\\ &\le&  \int\limits_{|1+U_{n}|\le \frac 12}|\bar \psi(1+U_{n})|d\Psi_{n}+ \int\limits_{|1+U_{n}|\le \frac 12}|1+U_{n}|^{-\alpha}d\Psi_{n}.
\end{eqnarray*}
By Lemma \ref{lem:barpsi}, and reusing the relations \eqref{est:1+U} and \eqref{est:DW}, we may redo the equivalent calculations from the proof of Lemma \ref{lem:firstb} to show that there exists $K_5>0$ such that 
\begin{eqnarray*}
\lefteqn{\int\limits_{|1+U_{n}|\le \frac 12}|\bar \psi(1+U_{n})|d\Psi_{n}}\\&\le&  M^\ast \int\limits_{|1+U_{n}|\le 1/2}d\Psi_{n}=M^\ast\int\limits^{-\frac{ 1/2+f(X_n)h_{n}}{g(X_n)}}_{-\frac{ 3/2+f(X_n)h_{n}}{g(X_n)}}d\Psi_{n}\\&=&M^\ast\Psi_{n}\left( \frac{ -1/2-f(X_n)h_{n}}{g(X_n)} \right)-M^\ast\Psi_{n}\left( -\frac{ 3/2+f(X_n)h_{n}}{g(X_n)} \right)\\
&\le& K_5 h^{3/2}_{n}g^3(X_n).
\end{eqnarray*}
Making the substitution $u:=1+f(X_n)h_{n}+g(X_n)z$ and applying \eqref{est:DeltaWn}, we get
\begin{multline*}
 \int\limits_{|1+U_{n}|\le \frac 12}|1+U_{n}|^{-\alpha}d\Psi_{n}\\=\int\limits_{| 1+f(x_n)h_{n}+g(x_n)z |\le \frac 12}|  1+f(x_n)h_{n}+g(x_n)z|^{-\alpha} \frac 1{\sqrt{2\pi h_n}}\exp\left(-\frac{z^2}{2h_n}\right)dz\\
= \frac 1{\sqrt{2\pi h_n}}\int\limits_{| u |\le \frac 12}| u|^{-\alpha}\exp\left(-\frac{\left( \frac{u-1-f(x_n)h_{n}}{g(x_n)} \right)^2}{2h_n}\right)dz.
\end{multline*}
For $|u|\le \frac 12$, we have
\[
-\frac 32-f(X_n)h_{n} \le u-1-f(X_n)h_{n}\le -\frac 12-f(X_n)h_{n},
\]
which implies
\[
\left| u-1-f(X_n)h_{n}\right|\ge \frac 12+f(X_n)h_{n}\ge \frac 12.
\]
So, for $|u|\le \frac 12$,
\[
\exp\left(-\frac{\left( \frac{u-1-f(X_n)h_{n}}{g(X_n)} \right)^2}{2h_n}\right)\le \exp\left(-\frac 1{8h_n g^2(X_n)}\right).
\]
Applying the estimate $e^{-u}\le u^{-2}$ for $u>0$, we get
\[
\exp\left(-\frac{\left( \frac{u-1-f(X_n)h_{n}}{g(X_n)} \right)^2}{2h_n}\right)\le \exp\left(-\frac 1{8h_n g^2(X_n)}\right) \le 64h^2_n g^4(X_n).
\]
Noting that
\[
\int\limits_{| u |\le \frac 12}| u|^{-\alpha}du=\frac 2{(1-\alpha)2^{1-\alpha}},
\]
we get, for some constant $K_6>0$, 
\[
\int\limits_{|1+U_{n}|\le \frac 12}|1+U_{n}|^{-\alpha}d\Psi_{n}\le K_6 h^2_n g^4(X_n),
\]
and, therefore, applying  \eqref{prop: fgh}, we arrive at 
\[
\int\limits_{|1+U_{n}|\le \frac 12}|1+U_{n}|^{-\alpha}d\Psi_{n}\le K_6 h_{n}^{3/2} g^3(X_n).
\]
It immediately follows that 
\begin{equation}
\label{est:Delta3}
\mathbb E\left[\left|\bar \psi(1+U_{n})-|1+U_{n}|^{-\alpha}\right|\bigl|\mathcal F_{t_{n-1}} \right]\le K_7 \sqrt{\bar h}h_{n}g^2(X_n).
\end{equation}
Combining \eqref {est:Ebarpsi} with  \eqref{est:Delta3} we arrive at 
\begin{eqnarray*}
\lefteqn{\mathbb E\left[|1+U_{n}|^{-\alpha}\bigl|\mathcal F_{t_{n-1}} \right]} \\&=&1-\frac 12\alpha h_{n} g^2(X_n)\left[\frac{2f(X_n)}{g^2(X_n)}\left(1- \bar h\frac{\alpha+1}2 -\bar h^2\frac {3M}{2\alpha}  \right)\right.\\
&&\left.-\left((1+\alpha)+\sqrt{\bar h}\frac {3MK_1+2K_7}{\alpha}\right) \right]\\&=&
1-\frac 12\alpha h_{n} g^2(X_n)\left[\frac{2f(X_n)}{g^2(X_n)}-\frac{\left((1+\alpha)+\sqrt{\bar h}\frac {3MK_1+2K_7}{\alpha}\right)}{\left(1-\bar h\frac {\alpha+1}2-\bar h^2\frac {3M}{2\alpha}  \right)} \right],\quad a.s.,
\end{eqnarray*}
which completes the proof.
\end{proof}

\begin{lemma}
\label{lem:instconstrmart}
Let $\{X_n\}_{n\in\mathbb{N}}$ be a solution of \eqref{eq:Euler}.
Then
\[
M_n=\prod_{k=0}^n\frac{\left(1+ U_{k}\right)^{-\alpha}} {\mathbb E\left[|1+U_{k}|^{-\alpha}\bigl|\mathcal F_{t_{k-1}} \right]}
\] is a non-negative $\mathcal{F}_{t_n}$-martingale.
\end{lemma}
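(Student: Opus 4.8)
This follows directly from the product-martingale construction of Lemma \ref{lem:prodMart}. The plan is to apply that lemma with $\mathcal G_n=\mathcal F_{t_n}$ and with the $k$-th factor
\[
Y_k:=\frac{|1+U_k|^{-\alpha}}{\mathbb E\left[|1+U_k|^{-\alpha}\mid\mathcal F_{t_{k-1}}\right]},
\]
so that, after the harmless reindexing that matches the product in $M_n$ to the one in Lemma \ref{lem:prodMart} (with $\mathcal F_{t_{-1}}$ read as the trivial $\sigma$-algebra, $X_0$ being deterministic), one has $M_n=\prod_k Y_k$. Here, consistently with the auxiliary function $\bar\psi$ and with the rest of the instability argument, the factor $(1+U_k)^{-\alpha}$ appearing in the statement is understood as $|1+U_k|^{-\alpha}$. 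It then suffices to verify that each $Y_k$ is non-negative, $\mathcal F_{t_k}$-measurable, integrable, and satisfies $\mathbb E[Y_k\mid\mathcal F_{t_{k-1}}]=1$ a.s.

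Everything reduces to the single observation that the normalising constant $D_k:=\mathbb E[|1+U_k|^{-\alpha}\mid\mathcal F_{t_{k-1}}]$ is a.s. strictly positive and a.s. finite. Positivity is immediate since $|1+U_k|^{-\alpha}>0$. Finiteness is the only point needing care, and it is precisely the computation already carried out in the proof of Lemma \ref{lem:Ualbound}: conditionally on $\mathcal F_{t_{k-1}}$ the increment $\Delta W_{k+1}$ is Gaussian (Remark \ref{rem:condMoments}), so the substitution $u=1+f(X_k)h_k+g(X_k)z$ turns $D_k$ into a finite multiple of $\int_{\mathbb R}|u|^{-\alpha}\rho(u)\,du$ for a Gaussian density $\rho$, which is finite because $\alpha\in(0,1)$ makes $|u|^{-\alpha}$ locally integrable while the Gaussian weight controls the tail; on the event $\{X_k=0\}$ one has $U_k=f(0)h_k+g(0)\Delta W_{k+1}$, whence either $1+U_k=1+f(0)h_k\ge 1$ (if $g(0)=0$) or the same Gaussian argument applies (if $g(0)\neq0$). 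When \eqref{cond:instabgamma} also holds, one may instead simply quote the bound \eqref{est:instab1}.

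Granting $0<D_k<\infty$ a.s., the rest is bookkeeping. Since $U_k$ is $\mathcal F_{t_k}$-measurable — this is the measurability structure underlying Lemmas \ref{lem:firstb} and \ref{lem:Ualbound} — and $D_k$ is $\mathcal F_{t_{k-1}}$-measurable, $Y_k$ is $\mathcal F_{t_k}$-measurable, and it is non-negative because numerator and denominator are positive. Pulling the $\mathcal F_{t_{k-1}}$-measurable, a.s. finite and positive factor $D_k^{-1}$ out of the conditional expectation gives $\mathbb E[Y_k\mid\mathcal F_{t_{k-1}}]=D_k^{-1}\,\mathbb E[|1+U_k|^{-\alpha}\mid\mathcal F_{t_{k-1}}]=1$ a.s., and taking expectations — legitimate for the non-negative $Y_k$ by the tower property — yields $\mathbb E[Y_k]=1<\infty$. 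Hypotheses (i)--(ii) of Lemma \ref{lem:prodMart} are thus met, so $M_n=\prod_k Y_k$ is an $\mathcal F_{t_n}$-martingale, non-negative as a product of non-negative terms. The only genuine obstacle is the a.s. finiteness of $D_k$ noted above, and it is mild, hinging only on $\alpha\in(0,1)$.
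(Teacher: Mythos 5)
Your proof is correct and follows essentially the same route as the paper: you verify the hypotheses of Lemma \ref{lem:prodMart} for $Y_k=\left(1+U_k\right)^{-\alpha}\big/\,\mathbb E\left[|1+U_k|^{-\alpha}\,\bigl|\,\mathcal F_{t_{k-1}}\right]$ (non-negativity, $\mathbb E[Y_k\mid\mathcal F_{t_{k-1}}]=1$ by pulling out the $\mathcal F_{t_{k-1}}$-measurable denominator, and integrability via the tower property), exactly as the paper does. The only difference is that you additionally justify the a.s. positivity and finiteness of the conditional expectation in the denominator via the conditional Gaussianity of $\Delta W_{k+1}$ and $\alpha\in(0,1)$, a point the paper's proof takes for granted; this is a welcome but not structurally different addition.
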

\begin{proof}
Let 
\[
Y_k:=\frac{\left(1+ U_{k}\right)^{-\alpha}} {\mathbb E\left[|1+U_{k}|^{-\alpha}\bigl|\mathcal F_{t_{k-1}} \right]},\quad k\in\mathbb{N}.
\]
Note that each $Y_k$ is nonnegative, and
\[
\mathbb E\left[ Y_k\bigl|\mathcal F_{t_{k-1}} \right]=\mathbb E\left[\frac{\left(1+ U_{k}\right)^{-\alpha}} {\mathbb E\left[|1+U_{k}|^{-\alpha}\bigl|\mathcal F_{t_{k-1}} \right]} \biggl| \mathcal F_{t_{k-1}}\right]=1, \quad a.s.,
\]
and
\[
\mathbb E\left[ Y_k \right]=\mathbb E\left[ \mathbb E\left[Y_k\bigl|\mathcal F_{t_{k-1}} \right]\right]=\mathbb E\left(1\right)<\infty.
\]
Therefore each $M_n$ is non-negative and, by  Lemma \ref{lem:prodMart}, the sequence $\{M_n\}_{n\in\mathbb{N}}$ is an $\mathcal{F}_{t_n}$-martingale.
\end{proof}

\subsection{Proofs of main results}
\begin{proof}[Proof of Theorem \ref{thm:ASstab}]
Denote, for each $n\in \mathbb N$,
\[
z_n:=|X_n|^{\alpha}, \quad v_n:=  \frac 12\alpha \lambda h_{n}g^2(X_n)\left(1+\bar h^2\frac {3M}{2\alpha}  \right).
\]
Note that $z_n\ge 0$ and $v_n\ge 0$, for each $n\in \mathbb N$, and Eq. \eqref{eq:attrueEMalpha}, \eqref{est:2}, \eqref{def:martdif}, and \eqref{def:mart} yield 
\begin{equation*}
\label{ineq:1}
z_{n+1}\le z_n-v_n+\mathcal M_n, \quad n\in\mathbb N. 
\end{equation*}
Applying \eqref{prop: fgh} and Lemma \ref{lem:|xn|}, we obtain for each $n\in \mathbb N$ that
\[
\mathbb E z_n<\infty, \quad \mathbb E v_n\le 2\alpha \lambda \left(1+\bar h^2\frac {3M}{2\alpha}  \right)<\infty.
\]
From Lemma \ref{lem:nonegdif} we conclude that both $z_n$ and $\sum_{i=0}^nv_i$ converge a.s.  Therefore $|x_n|$ converges a.s. to some nonnegative  random variable $a_0$. This implies in particular that for a.a. $\omega\in\Omega$, there exists $N_0(\omega)\in \mathbb N $  such that, for $n\ge N_0(\omega) $, 
\begin{equation}
\label{ineq:xna1}
\frac 12 a_0(\omega)\le |X_n(\omega)|\le \frac 32 a_0(\omega).
\end{equation}
We want to prove that $a_0=0$ a.s. Suppose it is not true, then there exists $\Omega_1\subset \Omega$, $\mathbb P [\Omega_1]>0$, such that, on $\Omega_1$, $a_0(\omega)>0$.
Eq. \eqref {ineq:xna1}  and the continuity of $f$ and $g$ imply that there exist positive random values $H_1$ and $H_2$ such that, for a.a. $\omega\in\Omega$,
\[
f\left(X_n(\omega)\right)\le H_1(\omega), \quad  g^2\left(X_n (\omega)\right)\le H_2(\omega), \quad \text{for}\quad n\ge N_0(\omega),
\]
and therefore,  
\[
 h_n (\omega) =\frac {\bar h}{1+\lfloor f(X_n(\omega) )\rfloor +\lfloor g^2(X_n(\omega) )\rfloor } \ge \frac {\bar h}{1+H_1(\omega) +H_2(\omega)}.
\]
By Assumption \ref{as:fg} there also exist $N_1(\omega)\in \mathbb N $  and $H_3(\omega)\in \mathbb R$,  such that for $\omega\in\Omega_1$ we have
\[
g^2(X_n(\omega))\ge H_3(\omega)\quad\text{for}\quad n\ge N_1(\omega).
\]
We have
\[
\sum_{i=0}^\infty h_i=\sum_{i=0}^{N_0} h_i+\sum_{i=N_0+1}^\infty h_i\ge \sum_{i=N_0+1}^\infty \frac { \bar h}{1+H_1+H_2}=\infty,\quad a.s.
\]
On $\Omega_1$, we have
\begin{multline*}
\sum_{i=0}^\infty v_i\ge \sum_{i=N_1}^\infty \frac 12\alpha \lambda h_{i}g^2(X_i)\left(1+\bar h^2\frac {3M}{2\alpha}  \right)\\
\ge \frac 12\alpha \lambda H_3 \left(1+\bar h^2\frac {3M}{2\alpha}  \right)\sum_{i=N_1}^\infty  h_{i}=\infty.
\end{multline*}
The contradiction thus obtained proves the result.
\end{proof}

\begin{proof}[Proof of Theorem \ref{thm:ASinstab}]
Iterating equation \eqref{eq:Euler} back to the initial value gives
\begin{equation}
\label{eq:attrueEM-alpha1}
|X_{n+1}|^{\alpha}=|\varsigma|^{\alpha}\prod_{k=0}^n\left(1+ U_{k}\right)^{\alpha}, \quad n\in\mathbb{N},
\end{equation}
which can be written
\begin{equation}
\label{eq:-alpha1}
\begin{split}
|X_{n+1}|^{\alpha}=&|\varsigma|^{\alpha} 
\cdot\left(\prod_{k=0}^n\frac{\left(1+ U_{k}\right)^{-\alpha}} {\mathbb E\left[|1+U_{k}|^{-\alpha}|\mathcal F_{t_{k-1}} \right]}\right)^{-1}
\cdot\frac 1{\prod_{k=0}^n\mathbb E\left[|1+U_{k}|^{-\alpha}|\mathcal F_{t_{k-1}} \right]}\\&=|\varsigma|^{\alpha}\cdot \frac 1{M_n}\cdot\frac 1{\prod_{k=0}^n\mathbf E\left[|1+U_{k}|^{-\alpha}|\mathcal F_{t_{k-1}} \right]},\quad n\in\mathbb{N}.
\end{split}
\end{equation}
Since, by Lemma \ref{lem:instconstrmart}, $M_n$ is a non-negative $\mathcal{F}_{t_n}$-martingale, it converges a.s. to a finite limit by Lemma \ref{lem:nnM}. Therefore $1/M_n$ is ultimately bounded away from 0 a.s. 

We proceed by contradiction, and assume that $\mathbb P\{\Omega_2 \}>0$, where \\
$\Omega_2:=\{\omega: \lim_{n\to \infty}X_n(\omega)=0 \}$.  Let $\gamma>1$ be as defined in \eqref{cond:instabgamma} in the statement of the theorem and, as required in the statement of Lemma \ref{lem:Ualbound}, we choose $\alpha$ so that $0<\alpha<\gamma-1$.  Again, by \eqref{cond:instabgamma}, we can choose $\delta\in (0, \,\gamma-1-\alpha)$
and $N(\omega, \delta)$ such that, for $\omega \in \Omega_2$ 
\[
\frac{2f(X_n)}{g^2(X_n)}>\gamma-\alpha -\delta>1, \, \text{ for all } n\ge N(\omega, \delta).
\]

Developing from estimate \eqref{est:instab1} in the statement of Lemma \ref{lem:Ualbound}, we now choose $h_0>0$ such that for all $\bar h\le h_0$ 
\begin{equation*}
\label{cond:fg31}
\frac {1+\alpha+\sqrt{\bar h}\frac {3MK_1+2K_7}{\alpha}}{1- \bar h\frac{\alpha+1}2 -\bar h^2\frac {3M}{2\alpha} }<1+\alpha+\frac{\gamma -1-\alpha-\delta}2=\frac{1+\alpha+\gamma-\delta}2.
\end{equation*}
Then, for each $n\ge N(\omega, \delta)$,
\begin{multline*}
\frac{2f(X_n)}{g^2(X_n)}-\frac{\left((1+\alpha)+\sqrt{\bar h}\frac {3MK_1+2K_7}{\alpha}\right)}{\left(1-\bar h \frac {\alpha+1}2-\bar h^2\frac {3M}{2\alpha}  \right)}\\
>\gamma -\alpha-\delta- \frac{1+\alpha+\gamma-\delta}2=\frac{\gamma-1-\alpha-\delta}2=:\bar \lambda>0.
\end{multline*}
Applying this to \eqref{est:instab1} yields that,  
for all $n\ge N(\omega, \delta)$,
\begin{equation}
\label{est:3}
\mathbb E\left[|1+U_{n}|^{-\alpha}\bigl|\mathcal F_{t_{n-1}} \right]\le 1-\frac 2\alpha \bar \lambda h_{n} g^2(X_n)
\le 1,\quad a.s.
\end{equation}
Applying this inequality to representation \eqref {eq:-alpha1} we obtain, for $n\ge N(\omega, \delta)$
\[
|X_{n+1}|^{\alpha}\ge |\varsigma|^{\alpha}\cdot \frac 1{M_n}\cdot\frac 1{\prod_{k=0}^{n\ge N(\omega, \delta)+1 }\mathbf E\left[|1+U_{k}|^{-\alpha}|\mathcal F_{t_{k-1}} \right]}.
\]
The only $n$ dependent  factor on the RHS is $M_n$ which tends to a nonzero limit, by Lemma \ref{lem:nnM}. All other factors are nonzero, so $X_n$ cannot converge to zero on $\Omega_2$. The contradiction thus obtained completes the proof.
\end{proof}

\begin{proof}[Proof of Theorem \ref{thm:pos}]
Define
\begin{equation*}
\label{def:RNpos}
\mathcal R_i=\mathcal R_i(N)=\{X_i>0, X_{i-1}>0, \dots, X_1>0, X_0>0  \}, \quad i=0, 1, \dots, N.
\end{equation*}
Since the initial value $\varsigma>0$, we have $\mathbb P[\mathcal R_0]=1$ and therefore
\begin{equation}
\label{calcul: Pn}
\begin{split}
\mathbb P\left[\mathcal R_N\right]= \mathbb P\left[\bigcap_{i=0}^N \mathcal R_i \right]&=\prod_{i=1}^N \mathbb P\left[\mathcal R_i\bigl| \mathcal R_{i-1}, \dots \mathcal R_0\right]
\\
&=\prod_{i=0}^{N-1} \mathbb P\left[X_{i+1}>0\bigl| \mathcal R_{i}\right]\\
&=\prod_{i=0}^{N-1} \mathbb P\left[X_{i+1}>0\bigl| X_{i}>0, \dots, X_0>0  \right].
\end{split}
\end{equation}
Note that, on the event $\mathcal R_i$, the inequality $X_{i+1}>0$ is equivalent to
\[
1+ h_if(X_i)+g(X_i)\Delta W_{i+1}>0,
\]
which, in turn, is equivalent to
\[
\frac{\Delta W_{i+1}}{\sqrt{h_{i}}}>-\frac{1+ h_{i}f(X_i)}{\sqrt{h_{i}}g(X_i)}.
\]
Since $f(x)$  in nonnegative for all $x$, and $\sqrt{h_{i}}g(x_i)\le \sqrt{h}$,  by \eqref{prop: fgh} we have, for $X_i>0$, 
\begin{equation}\label{eq:invBound}
-\frac{1+ h_{i}f(X_i)}{\sqrt{ h_i}g(X_i)}\le -\frac{1}{\sqrt{h_i}g(X_i)}\le -\frac{1}{\sqrt{\bar h}}.
\end{equation}
Therefore
\[
\left\{ u \ge -\frac{1}{\sqrt{\bar h}}\right\}\subseteq  \left\{ u \ge -\frac{1}{\sqrt{h_{i}}g(X_i)}\right\}\subseteq \left\{ u \ge -\frac{1+ h_{i}f(X_i)}{\sqrt{h_{i}}g(X_i)}\right\}.
\]
Recall from Remark \ref{rem:condMoments} that the random variable $\zeta_{i+1}:=\Delta W_{i+1}/\sqrt{h_{i}}$ is distributed conditionally upon $\mathcal{F}_{t_i}$ like a standard normal variable $\mathcal Z\sim\mathcal N (0, 1)$. So we may apply \eqref{calcul: Pn}, the inequality \eqref{eq:invBound}, and the fact that $\Phi$ is a symmetric distribution function to get
\begin{eqnarray*}
\mathbb P[\mathcal P_N]&=&\prod_{i=0}^{N-1} \mathbb P\left[X_{i+1}>0\bigl| \mathcal{F}_{t_i}\cap \mathcal R_{i}\}\right]\\
&=&
\prod_{i=0}^{N-1} \mathbb P\left[\frac{\Delta W_{i+1}}{\sqrt{h_{i}}}>-\frac{1+ h_{i}f(X_i)}{\sqrt{h_{i}}g(X_i)} \biggl|  \mathcal{F}_{t_i}\cap\mathcal R_{i}  \right]\\
&\ge &\prod_{i=0}^{N-1} \mathbb P\left[\zeta_{i+1}>-\frac 1{\sqrt{\bar h}} \biggl| \mathcal{F}_{t_i}\cap\mathcal R_{i}\right]\nonumber\\
&=&\prod_{i=0}^{N-1} \mathbb P\left[\mathcal  Z>-\frac 1{\sqrt{\bar h}} \right] =  \left(\mathbb P\left[\mathcal  Z>-\frac 1{\sqrt{\bar h}} \right]\right)^N\\
&=&\left(1-\Phi \left( -\frac 1{\sqrt{\bar h}} \right)  \right)^{N}=\left(\Phi \left( \frac 1{\sqrt{\bar h}} \right)\right)^N.\nonumber
\end{eqnarray*}
Fix $\varepsilon\in(0,1)$, then for all $\bar h\in (0, \bar h(\varepsilon))$ we have 
\begin{equation*}
\label{prob:varep}
\left(\Phi \left( \frac 1{\sqrt{\bar h}} \right)\right)^N\ge 1-\varepsilon,
\end{equation*}
where (since $\Phi$ is non-decreasing on $\mathbb{R}$ and therefore invertible)
\begin{equation*}
\label{def:hvarep}
\bar h(\varepsilon):= \frac 1{\left(\Phi^{-1}\left[ (1-\varepsilon)^{\frac1N}  \right]\right)^2},
\end{equation*}
providing the statement of the Theorem.
\end{proof}

\begin{remark}
\label{rem:pos}
We can derive an alternative, computationally more convenient bound on $\bar h$ using the following set of inequalities due to Sasvari \& Chen~\cite{SasChen}:
\begin{equation}\label{eq:sasChen}
\sqrt{1-e^{-x^2/2}}<\frac{1}{\sqrt{2\pi}}\int_{-x}^{x}e^{-s^2/2}ds<\sqrt{1-e^{-2x^2/\pi}}.
\end{equation}
Applying the left-hand-part of \eqref{eq:sasChen} gives
\[
\left[\Phi\left(\frac{1}{\sqrt{\bar h}}\right)\right]^N>\left[\frac{1}{2}+\frac{1}{2}\sqrt{1-e^{-1/(2{\bar h}^2)}}\right]\geq 1-\varepsilon,
\]
which can be used similarly to derive the bound
\[
h_1(\varepsilon):=\left(2\ln\left(\frac{1}{1-(2(1-\varepsilon)^{1/N}-1)^2}\right)\right)^{-1/2}.
\]
This bound is real-valued and positive for all $\varepsilon\in(0,1)$, and has the advantage of not requiring that we invert $\Phi$.
\end{remark}

\end{document}